\newtheorem{theorem}{Theorem}[section]
\newtheorem{proposition}[theorem]{Proposition}
\newtheorem{corollary}[theorem]{Corollary}
\newtheorem{lemma}[theorem]{Lemma}
\newtheorem{fact}[theorem]{Remark}
\newtheorem{remark}[theorem]{Remark}
\newcommand{\N}{{\mathbb N}}
\newcommand{\R}{{\mathbb R}}
\newcommand{\remin}{\mathop{-\!\!\!\!\!\hspace*{1mm}\raisebox{0.5mm}{$\cdot$}}\nolimits}
\title{Quantitative results on a generalized viscosity approximation method}
\author{Paulo Firmino${}^{a}$ and Lauren{\c t}iu Leu{\c s}tean${}^{b,c,d}$\\[2mm]
\footnotesize ${}^{a}$ Departamento de Matem\'atica, Faculdade de Ci\^encias, Universidade de Lisboa\\ 
\footnotesize ${}^{b}$ LOS, Faculty of Mathematics and Computer Science, University of Bucharest\\
\footnotesize ${}^{c}$ Simion Stoilow Institute of Mathematics of the Romanian Academy\\
\footnotesize ${}^{d}$ Institute for Logic and Data Science, Bucharest\\[1mm]
\footnotesize Emails: \protect\url{fc49883@alunos.ciencias.ulisboa.pt }, \protect\url{laurentiu.leustean@unibuc.ro}
}
\begin{document}
\date{}
\maketitle

\begin{abstract}

\noindent
In this paper,  we study, in a nonlinear setting, the asymptotic behaviour of a generalized viscosity approximation method associated with a countable family of nonexpansive mappings satisfying resolvent-like conditions.  We apply proof mining methods to obtain quantitative results on asymptotic regularity  in $W$-hyperbolic spaces and rates of metastability in CAT(0) spaces. \\

\noindent {\em Keywords:} Viscosity approximation method;  Resolvent-like conditions; Asymptotic regularity;  Metastability; Proof mining.\\

\noindent  {\it Mathematics Subject Classification 2010}:  47H05, 47H09, 47J25, 03F10.

\end{abstract}

\section{Introduction}

Let $X$ be a $W$-space, $C$ a nonempty convex subset of $X$ and  $(T_n:C\to C)_{n\in\N}$  a sequence of nonexpansive mappings with common fixed points.
The generalized Viscosity Approximation Method (genVAM, for short) is defined as follows: 
\[
x_0=x\in C, \quad x_{n+1}=\alpha_n f(x_n) +(1-\alpha_n)T_nx_n,
\]
where $(\alpha_n)_{n\in\N}$ is a sequence in $[0,1]$ and $f:C\to C$ is an $\alpha$-contraction 
with $\alpha\in[0,1)$. 

If $f$ is constant, the genVAM becomes the  abstract Halpern-type Proximal Point Algorithm (abstract HPPA, for short), a 
generalization of the well-known Halpern-type Proximal Point Algorithm (HPPA), which was introduced by 
Xu \cite{Xu02} and by Kamimura and Takahashi \cite{KamTak00}. The abstract HPPA has been studied recently by Aoyama, Kimura, Takahashi and 
Toyoda \cite{AoyKimTakToy07} and, using proof mining methods, by Sipo\c s \cite{Sip22} and by Kohlenbach and Pinto \cite{KohPin22}.  
The Viscosity Approximation Method (VAM) in Banach spaces,  due to Xu et al. \cite{XuAltAlzChe22}, is also a particular case of the genVAM iteration.

In this paper we study the asymptotic behaviour of the genVAM iteration by assuming that the family 
$(T_n)$ satisfies resolvent-like conditions introduced by the second author, Nicolae and Sipo\c s  \cite{LeuNicSip18}
in their abstract analysis of the proximal point algorithm. Resolvents of maximally monotone operators in Hilbert spaces, as well as resolvents  
of $m$-accretive operators in Banach spaces, are examples of families of mappings that satisfy these conditions. 

In Section \ref{section-rates-as-reg} we study the genVAM  iteration in $W$-hyperbolic spaces  and compute rates of ($(T_n)$-)asymptotic regularity, as well as rates of 
$T_m$-asymptotic regularity for all $m\in\N$.  In this way, we  extend to our very general setting quantitative asymptotic results obtained for the VAM in Banach 
spaces by the authors  \cite{FirLeu25} and for the HPPA in Hilbert spaces by 
the second author and Pinto \cite{LeuPin21}. Furthermore, as an application of a lemma due to Sabach and Shtern \cite{SabSht17}, linear such rates are computed 
for particular choices of the parameter sequences.

In Section \ref{rates-meta-thms}, we first obtain, in the setting of complete CAT(0) spaces, uniform rates of metastability for the abstract HPPA, 
thereby generalizing - while providing simpler proofs and simpler rates - recent results of Sipo\c{s} \cite{Sip22}. 
Furthermore, by applying a method developed by Kohlenbach and Pinto \cite{KohPin22}, we derive rates of metastability 
for the genVAM iteration and, as an immediate consequence, establish a (qualitative) strong convergence result for this iteration.

The results presented in this paper are obtained using techniques from proof mining, a research field concerned with 
extracting new quantitative and qualitative information from mathematical proofs through proof-theoretic methods. 
A detailed presentation of the proof mining program can be found in Kohlenbach's monograph \cite{Koh08}, and more recent 
applications of proof mining are surveyed in \cite{Koh19,Koh20}.

We recall below some notation and notions used throughout the paper. We denote $\N^*=\N\setminus \{0\}$ and $\R^+=(0,\infty)$.
For all $m,n\in\N$, we set $m\remin n = \max\{m-n,0\}$. Moreover, if $n\geq m$, we write $[m;n]=\{m,m+1,\ldots, n\}$. 
For $x\in \R$, $\lceil x \rceil$ denotes the ceiling of $x$. For any mapping $T$, $Fix(T)$ is the set of fixed points of $T$.
For every $g:\N\to \N$, the mappings $\tilde{g}, g^+:\N\to \N$ are defined as follows:
\[\tilde{g}(n)=n+g(n), \quad g^+(n)=\max\{g(i)\mid i\in [0;n]\}.\]
Furthermore, if $n\in\N$, $g^{(n)}$ denotes the $n$-th iterate of $g$.

\section{Preliminaries}

\subsection{$W$-hyperbolic and CAT(0) spaces}

A $W$-space \cite{CheLeu22} is a structure $(X,d,W)$, where $(X,d)$ is a metric space and 
$W:X\times X\times [0,1]\to X$. 
These structures have been already considered by Takahashi \cite{Tak70}, $W$ being called a 
\emph{convexity mapping}, as $W(x,y,\lambda)$ is thought of as an abstract convex 
combination of the points $x,y\in X$ with parameter $\lambda$. 
We denote a $W$-space simply by $X$. A nonempty subset $C$ of $X$ is 
said to be convex if for every $x,y\in C$, $ W(x,y,\lambda) \in C$ for all  
$\lambda \in [0,1]$.

A $W$-hyperbolic space \cite{Koh05} is a $W$-space $X$ satisfying for all $x,y,z,w\in X$ and all $\lambda,\tilde{\lambda}\in [0,1]$,
\begin{eqnarray*}
(W1) & d(z,W(x,y,\lambda))\le (1-\lambda)d(z,x)+\lambda d(z,y),\\
(W2) & d(W(x,y,\lambda),W(x,y,\tilde{\lambda}))=|\lambda-\tilde{\lambda}|d(x,y),\\
(W3) & W(x,y,\lambda)=W(y,x,1-\lambda),\\
(W4) & \,\,\,d(W(x,z,\lambda),W(y,w,\lambda)) \le (1-\lambda)d(x,y)+\lambda
d(z,w).
\end {eqnarray*}
A normed space is, obviously, a $W$-hyperbolic space: just define $W(x,y,\lambda)=(1-\lambda)x+\lambda y$. Important classes of $W$-hyperbolic spaces are 
Busemann spaces \cite{Bus48,Pap05}, UCW-hyperbolic spaces \cite{Leu07,Leu10} and CAT(0) spaces \cite{BriHae99,AleKapPet19}. As proved in \cite[pp. 386-388]{Koh08}, a CAT(0) space can be defined as a $W$-hyperbolic space $X$ satisfying 
for all $x,y,z\in X$, 
\begin{equation*}
d^2\left(z,W\left(x,y,\frac{1}{2}\right)\right) \le \frac{1}{2}d^2(z,x) + \frac{1}{2} d^2(z,y)-\frac{1}{4}d^2(x,y).
\end{equation*}

We shall use from now on the notation $(1-\lambda)x+\lambda y$ for $W(x,y,\lambda)$.

\begin{lemma}
Let $X$ be a $W$-hyperbolic space. For all $x,y,w,z\in X$, $\lambda, \widetilde{\lambda}\in[0,1]$, 
\begin{align}
d(x,(1-\lambda)x + \lambda y)=\lambda d(x,y) \text{ and } 
d(y,(1-\lambda)x + \lambda y)=(1-\lambda)d(x,y), \label{W10-W-hyp} \\
\lambda x + (1-\lambda) x=x, \quad 1x+0y=x, \quad 0x+1y=y, \label{W-hyp-trivial}\\
d((1 - \lambda) x + \lambda z, (1 - \widetilde{\lambda}) y + \widetilde{\lambda} w)  \leq (1 - \lambda) d(x, y) + \lambda d(z, w) + 
\left\vert\lambda - \widetilde{\lambda} \right\vert d(y, w), \label{W42-xyzw-ineq1-1} \\
d(\lambda x + (1 - \lambda)z, \widetilde{\lambda} y +  (1 - \widetilde{\lambda}) w)  \leq \lambda d(x, y) +  (1 - \lambda)d(z, w) + 
\left\vert\lambda - \widetilde{\lambda}\right\vert d(y, w). \label{W42-xyzw-ineq1-2} 
\end{align}
\end{lemma}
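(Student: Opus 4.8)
The plan is to derive all four statements directly from the $W$-hyperbolic axioms (W1)--(W4), peeling them off in the order: the degenerate identities \eqref{W-hyp-trivial}, then \eqref{W10-W-hyp}, then the two four-point estimates \eqref{W42-xyzw-ineq1-1} and \eqref{W42-xyzw-ineq1-2}. I begin with \eqref{W-hyp-trivial}. Applying (W1) with $z:=x$ gives $d(x,W(x,x,\lambda))\le(1-\lambda)d(x,x)+\lambda d(x,x)=0$, whence $W(x,x,\lambda)=x$ for every $\lambda\in[0,1]$; taking parameter $1-\lambda$ yields $\lambda x+(1-\lambda)x=x$. Similarly, (W1) with $\lambda:=0$, $z:=x$ gives $d(x,W(x,y,0))\le d(x,x)=0$, so $1x+0y=W(x,y,0)=x$; and (W1) with $\lambda:=1$, $z:=y$ gives $d(y,W(x,y,1))\le d(y,y)=0$, so $0x+1y=W(x,y,1)=y$.

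Next, \eqref{W10-W-hyp} follows from (W2) together with the identities just obtained: $d(x,(1-\lambda)x+\lambda y)=d(W(x,y,0),W(x,y,\lambda))=|0-\lambda|\,d(x,y)=\lambda d(x,y)$, and symmetrically $d(y,(1-\lambda)x+\lambda y)=d(W(x,y,1),W(x,y,\lambda))=|1-\lambda|\,d(x,y)=(1-\lambda)d(x,y)$. (One could instead obtain "$\le$" in each case from (W1) and the reverse inequality from the triangle inequality applied to the triple $x$, $W(x,y,\lambda)$, $y$; I prefer the route via (W2), since it avoids the two-step estimate.)

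For \eqref{W42-xyzw-ineq1-1}, I interpolate through the point $(1-\lambda)y+\lambda w=W(y,w,\lambda)$ and use the triangle inequality:
\[
d\bigl(W(x,z,\lambda),W(y,w,\widetilde{\lambda})\bigr)\le d\bigl(W(x,z,\lambda),W(y,w,\lambda)\bigr)+d\bigl(W(y,w,\lambda),W(y,w,\widetilde{\lambda})\bigr),
\]
where the first summand is at most $(1-\lambda)d(x,y)+\lambda d(z,w)$ by (W4) and the second equals $|\lambda-\widetilde{\lambda}|\,d(y,w)$ by (W2). Finally, \eqref{W42-xyzw-ineq1-2} is deduced from \eqref{W42-xyzw-ineq1-1}: using (W3) we rewrite $\lambda x+(1-\lambda)z=(1-\lambda)z+\lambda x$ and $\widetilde{\lambda}y+(1-\widetilde{\lambda})w=(1-\widetilde{\lambda})w+\widetilde{\lambda}y$, and then apply \eqref{W42-xyzw-ineq1-1} with $(x,z,y,w)$ replaced by $(z,x,w,y)$, noting that $d(w,y)=d(y,w)$; alternatively, one simply repeats the interpolation argument directly, this time through $\lambda y+(1-\lambda)w$. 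I do not expect a substantial obstacle here: everything is routine verification, and the only thing requiring care is the bookkeeping forced by the convention $(1-\lambda)x+\lambda y=W(x,y,\lambda)$ — in particular, keeping track of which argument of $W$ carries the weight $1-\lambda$ — together with the observation that the nontrivial half of \eqref{W10-W-hyp} genuinely needs (W2) (or the triangle inequality), not just (W1).
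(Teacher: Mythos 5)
Your proof is correct, and every step checks out; the difference from the paper is mainly one of packaging. The paper disposes of \eqref{W10-W-hyp} and \eqref{W-hyp-trivial} by citation (Takahashi's argument uses only (W1) together with the metric triangle inequality: (W1) gives the two ``$\leq$'' estimates and the triangle inequality forces equality), cites \cite{CheLeu22} for \eqref{W42-xyzw-ineq1-1}, and obtains \eqref{W42-xyzw-ineq1-2} by the parameter substitution $\lambda:=1-\lambda$, $\widetilde{\lambda}:=1-\widetilde{\lambda}$ in \eqref{W42-xyzw-ineq1-1}, which needs no appeal to (W3). You instead give a self-contained derivation: your route to \eqref{W10-W-hyp} via the endpoint identities $W(x,y,0)=x$, $W(x,y,1)=y$ and (W2) is a clean alternative (valid here because (W2) is available, though it proves slightly less than Takahashi's (W1)-only argument); your proof of \eqref{W42-xyzw-ineq1-1} by interpolating through $W(y,w,\lambda)$ and using (W4) plus (W2) is exactly the standard argument behind the cited lemma; and your deduction of \eqref{W42-xyzw-ineq1-2} by swapping $(x,z,y,w)\mapsto(z,x,w,y)$ and invoking (W3) is equivalent to the paper's substitution, just with the bookkeeping done on the points rather than on the parameters. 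What your version buys is independence from the cited references; what the paper's version buys is brevity and, for \eqref{W42-xyzw-ineq1-2}, avoidance of (W3).
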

\begin{proof} \eqref{W10-W-hyp} and \eqref{W-hyp-trivial} were proved by 
Takahashi \cite{Tak70} for $W$-spaces satisfying (W1). See also \cite[Proposition 17.17]{Koh08}. 
\eqref{W42-xyzw-ineq1-1}  is proved in \cite[Lemma~2.1(iv)]{CheLeu22}. Apply \eqref{W42-xyzw-ineq1-1}  with 
$\lambda:=1-\lambda$ and $\tilde{\lambda}:=1-\tilde{\lambda}$
to get \eqref{W42-xyzw-ineq1-2}.
\end{proof}

\subsection{Quantitative notions and lemmas}

Let $(x_n)_{n\in\N}$ be a sequence in a metric space $(X,d)$ and $\varphi:\N\to\N$. If $(x_n)$ is Cauchy, then 
$\varphi$ is a Cauchy modulus of $(x_n)$ if for all $k,n\in\N$  with $n\geq \varphi(k)$,
\[\forall p\in\N \left(d(x_{n+p},x_n)\leq \frac1{k+1}\right).\]
If $\lim\limits_{n\to \infty} x_n = x \in X$, then $\varphi$ is a rate of convergence of 
$(x_n)$ if for all $k,n\in\N$ with $n\geq \varphi(k)$,
\[d(x_n,x)\leq \frac1{k+1}.\]

\begin{lemma}\label{rate-conv-implies-Cauchy-modulus}
Assume that $\lim\limits_{n\to \infty} x_n = x$ with rate of convergence $\varphi$. Define
\[\varphi^*:\N\to\N, \quad \varphi^*(k)=\varphi(2k+1).\]
Then $\varphi^*$ is a Cauchy modulus of $(x_n)$. 
\end{lemma}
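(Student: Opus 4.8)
The plan is to prove this directly from the triangle inequality, using the rate of convergence $\varphi$ twice. Let $k \in \N$ be arbitrary and suppose $n \ge \varphi^*(k) = \varphi(2k+1)$; we must bound $d(x_{n+p}, x_n)$ for an arbitrary $p \in \N$. Since $n \ge \varphi(2k+1)$, the defining property of a rate of convergence gives $d(x_n, x) \le \frac{1}{(2k+1)+1} = \frac{1}{2k+2}$. Likewise, $n + p \ge n \ge \varphi(2k+1)$, so the same bound applies to $x_{n+p}$, namely $d(x_{n+p}, x) \le \frac{1}{2k+2}$.

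The key step is then the triangle inequality:
\[
d(x_{n+p}, x_n) \le d(x_{n+p}, x) + d(x, x_n) \le \frac{1}{2k+2} + \frac{1}{2k+2} = \frac{1}{k+1},
\]
which is exactly the Cauchy modulus inequality. Since $k$, $n \ge \varphi^*(k)$, and $p$ were arbitrary, this shows $\varphi^*$ is a Cauchy modulus of $(x_n)$, and in particular $(x_n)$ is Cauchy.

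There is essentially no obstacle here — the only subtlety is the bookkeeping with the ``$+1$'' shifts coming from the convention that a rate $\varphi$ at input $k$ yields error $\le \frac{1}{k+1}$ rather than $\le \frac{1}{k}$. One must check that $2(2k+1) = 4k+2 \ge 2k+2$, equivalently that plugging $2k+1$ into $\varphi$ forces each distance to the limit to be at most $\frac{1}{2(k+1)}$, so that the two halves sum to $\frac{1}{k+1}$; choosing $\varphi^*(k) = \varphi(2k+1)$ is precisely calibrated for this. No appeal to the $W$-hyperbolic structure is needed — the argument works in any metric space.
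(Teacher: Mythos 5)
Your proof is correct and is essentially identical to the paper's: both apply the rate of convergence at index $2k+1$ to get $d(x_{n+p},x), d(x_n,x)\leq \frac{1}{2(k+1)}$ and conclude by the triangle inequality. The aside about $4k+2\geq 2k+2$ is unnecessary, but the argument itself matches the paper's proof exactly.
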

\begin{proof}
For every $k,n,p \in \N$ with $n\geq \varphi^*(k)$, we have that 
\[ d(x_{n+p},x_n) \leq d(x_{n+p},x) + d(x_n,x) \leq \frac1{2(k+1)} + \frac1{2(k+1)} = \frac1{k+1}.\]
\end{proof}

Let $\sum\limits_{n=0}^\infty a_n$ be a series of nonnegative real numbers and 
$\left(\tilde{a}_n\right)$ be its sequence of partial sums, 
that is $\tilde{a}_n=\sum\limits_{i=0}^{n} a_i$ for all $n\in\N$.  If $\sum\limits_{n=0}^\infty a_n$ 
converges, then by a Cauchy modulus of $\sum\limits_{n=0}^\infty a_n$ we understand a Cauchy modulus of $\left(\tilde{a}_n\right)$. 
If $\sum\limits_{n=0}^\infty a_n$ diverges, then a rate of divergence of $\sum\limits_{n=0}^\infty a_n$ is a mapping $\theta:\N\to\N$  satisfying 
$\sum\limits_{i=0}^{\theta(n)} a_i \geq n$ for all $n\in\N$.

\begin{lemma}\label{rate-divergence-an-prop}
Assume that $\sum\limits_{n=0}^\infty a_n$ diverges with rate of divergence $\theta$.
\begin{enumerate}
\item\label{rate-divergence-an-01}  If $(a_n) \subseteq [0,1]$, then $\theta(n)\geq n-1$ for all $n\in\N$.
\item\label{rate-divergence-an-N}  For all $N\in\N^*$, $\sum\limits_{n=0}^\infty a_{n+N}$ diverges 
with rate of divergence 
$\theta^*(n)=\theta\left(n+\left\lceil \sum_{i=0}^{N-1} a_i\right\rceil\right)\remin N$. If
$(a_n) \subseteq [0,1]$, then we can take $\theta^*(n)=\theta^+(n+N)\remin N$.
\item\label{rate-divergence-cbn-np1-cbn}  For all $c\in (0,\infty)$, $\sum\limits_{n=0}^\infty ca_n$ diverges with rate of 
divergence $\chi(n)=\theta\left(\left\lceil \frac{n}{c}\right\rceil\right)$.
\item\label{rate-divergence-sum-bn} If $\sum\limits_{n=0}^\infty b_n$ diverges with rate of divergence $\gamma$, then 
$\sum\limits_{n=0}^\infty (a_n+b_n)$ diverges with rate of divergence 
$\omega(n)=\min\{\theta(n),\gamma(n)\}$.
\end{enumerate}
\end{lemma}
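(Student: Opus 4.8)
The plan is to verify each of the four items by unwinding the relevant definitions and performing elementary estimates on partial sums; none of these require any of the $W$-hyperbolic structure, only the defining inequality $\sum_{i=0}^{\theta(n)} a_i \ge n$ of a rate of divergence together with nonnegativity of the terms. Throughout I write $\tilde a_n = \sum_{i=0}^n a_i$ for the partial sums, and I note that since all $a_i \ge 0$ the sequence $(\tilde a_n)$ is nondecreasing; hence $\sum_{i=0}^{M} a_i \ge n$ holds for \emph{every} $M \ge \theta(n)$, and also $\sum_{i=0}^{\theta^+(n)} a_i \ge \sum_{i=0}^{\theta(n)} a_i \ge n$, so $\theta^+$ is again a rate of divergence. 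This monotonicity observation is what makes the manipulations in (ii) go through.

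For (i), apply the defining inequality with the crude bound $a_i \le 1$: if $(a_n)\subseteq[0,1]$ then $n \le \sum_{i=0}^{\theta(n)} a_i \le \theta(n)+1$, whence $\theta(n) \ge n-1$. For (iii), given $c \in (0,\infty)$ and $n\in\N$, set $m = \lceil n/c\rceil$, so $m \ge n/c$; then $\sum_{i=0}^{\theta(m)} c a_i = c\sum_{i=0}^{\theta(m)} a_i \ge c\,m \ge n$, so $\chi(n)=\theta(\lceil n/c\rceil)$ works. For (iv), since $b_n \ge 0$ we have $\sum_{i=0}^{M}(a_i+b_i) \ge \sum_{i=0}^{M} a_i$ and likewise $\ge \sum_{i=0}^{M} b_i$ for any $M$; taking $M = \omega(n) = \min\{\theta(n),\gamma(n)\}$, one of the two sums over $[0;M]$ still dominates $n$ by the monotonicity remark (if $M = \theta(n) \le \gamma(n)$ use $\sum_{i=0}^{M} a_i \ge n$ directly; if $M = \gamma(n)$ use $\sum_{i=0}^{M} b_i \ge n$), and in either case $\sum_{i=0}^{\omega(n)}(a_i+b_i)\ge n$.

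Item (ii) is the only one with real content, and the main point is a reindexing of the tail series. Fix $N\in\N^*$ and put $S = \sum_{i=0}^{N-1} a_i$. For any $M\in\N$ we have the telescoping identity $\sum_{j=0}^{M} a_{j+N} = \sum_{i=0}^{M+N} a_i - S$. Now given $n\in\N$, choose $M$ so that $\sum_{i=0}^{M+N} a_i \ge n + S$; since $n + S \le n + \lceil S\rceil$, it suffices to have $\sum_{i=0}^{M+N} a_i \ge n + \lceil S\rceil$, and by the defining property of $\theta$ this holds as soon as $M + N \ge \theta(n + \lceil S\rceil)$, i.e. $M \ge \theta(n+\lceil S\rceil) \remin N$. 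Taking $M = \theta^*(n) = \theta(n+\lceil S\rceil)\remin N$ then gives $\sum_{j=0}^{\theta^*(n)} a_{j+N} = \sum_{i=0}^{\theta^*(n)+N} a_i - S \ge (n+\lceil S\rceil) - S \ge n$, using $\theta^*(n)+N \ge \theta(n+\lceil S\rceil)$ (which is where we need $N\ge 1$ only to ensure $\remin$ behaves, but the inequality $\theta^*(n)+N\ge\theta(n+\lceil S\rceil)$ holds regardless by definition of $\remin$) and the monotonicity remark. For the improvement under $(a_n)\subseteq[0,1]$: then $S \le N$, so $\lceil S\rceil \le N$, hence $\theta(n+\lceil S\rceil) \le \theta^+(n+N)$ by definition of $\theta^+$; moreover $\theta^+$ is a rate of divergence by the monotonicity remark, so $\theta^+(n+N)\remin N$ is also a valid rate of divergence for $\sum a_{n+N}$ by the same computation with $\theta$ replaced by $\theta^+$ and $\lceil S\rceil$ replaced by $N$. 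The only mild subtlety to watch is the interaction of $\remin$ with these inequalities — that $M\remin N$ added back to $N$ can exceed $M$ when $M<N$ — but this only makes the partial sum larger, so the estimate is preserved; I do not expect any genuine obstacle here.

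\begin{proof}
Write $\tilde a_n = \sum_{i=0}^n a_i$. Since $a_i \ge 0$ for all $i$, the sequence $(\tilde a_n)$ is nondecreasing; hence for every $n$ and every $M \ge \theta(n)$ we have $\sum_{i=0}^{M} a_i \ge \sum_{i=0}^{\theta(n)} a_i \ge n$, and in particular $\sum_{i=0}^{\theta^+(n)} a_i \ge n$, so $\theta^+$ is also a rate of divergence of $\sum_{n=0}^\infty a_n$.

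\eqref{rate-divergence-an-01}: If $(a_n)\subseteq[0,1]$, then $n \le \sum_{i=0}^{\theta(n)} a_i \le \theta(n)+1$, so $\theta(n)\ge n-1$.

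\eqref{rate-divergence-an-N}: Fix $N\in\N^*$ and let $S = \sum_{i=0}^{N-1} a_i$. For every $M\in\N$, $\sum_{j=0}^{M} a_{j+N} = \sum_{i=0}^{M+N} a_i - S$. Let $n\in\N$ and set $M = \theta^*(n) = \theta(n+\lceil S\rceil)\remin N$. Then $M+N \ge \theta(n+\lceil S\rceil)$, so by the monotonicity remark $\sum_{i=0}^{M+N} a_i \ge n+\lceil S\rceil$, whence
\[
\sum_{j=0}^{\theta^*(n)} a_{j+N} = \sum_{i=0}^{M+N} a_i - S \ge n + \lceil S\rceil - S \ge n.
\]
If moreover $(a_n)\subseteq[0,1]$, then $S \le N$, so $\lceil S\rceil \le N$, hence $\theta(n+\lceil S\rceil) \le \theta^+(n+N)$. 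Since $\theta^+$ is a rate of divergence of $\sum_{n=0}^\infty a_n$, repeating the computation above with $\theta$ replaced by $\theta^+$ and $\lceil S\rceil$ replaced by $N$ shows that $\theta^*(n) = \theta^+(n+N)\remin N$ is also a rate of divergence of $\sum_{n=0}^\infty a_{n+N}$.

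\eqref{rate-divergence-cbn-np1-cbn}: Let $c\in(0,\infty)$ and $n\in\N$. Put $m = \lceil n/c\rceil$, so $cm \ge n$. Then
\[
\sum_{i=0}^{\theta(m)} ca_i = c\sum_{i=0}^{\theta(m)} a_i \ge cm \ge n,
\]
so $\chi(n) = \theta\left(\left\lceil \frac{n}{c}\right\rceil\right)$ is a rate of divergence of $\sum_{n=0}^\infty ca_n$.

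\eqref{rate-divergence-sum-bn}: Let $n\in\N$ and $M = \omega(n) = \min\{\theta(n),\gamma(n)\}$. Since $b_i \ge 0$, $\sum_{i=0}^{M}(a_i+b_i) \ge \sum_{i=0}^{M} a_i$; since also $a_i \ge 0$, $\sum_{i=0}^{M}(a_i+b_i) \ge \sum_{i=0}^{M} b_i$. If $M = \theta(n)$ then $\sum_{i=0}^{M} a_i \ge n$; if $M = \gamma(n)$ then $\sum_{i=0}^{M} b_i \ge n$ by the corresponding property of $\gamma$. In either case $\sum_{i=0}^{\omega(n)}(a_i+b_i) \ge n$.
\end{proof}
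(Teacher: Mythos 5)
Your proof is correct and follows essentially the same route as the paper's: the same reindexing identity and the inequality $\theta^*(n)+N\ge\theta(n+\lceil S\rceil)$ in (ii), the same one-line estimates in (iii) and (iv), and (i) argued directly rather than by contradiction, which is an immaterial difference. The observation that nonnegativity makes partial sums monotone (so $\theta^+$ is again a rate of divergence) is exactly the point the paper also relies on for the $[0,1]$ refinement in (ii).
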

\begin{proof}
\begin{enumerate}
\item Let $n\in\N$ and assume, by contradiction, that $\theta(n) \leq n-2$. It follows that 
$\sum\limits_{i=0}^{\theta(n)} a_i  \leq \sum\limits_{i=0}^{n-2} a_i \leq  n-1$, which is a contradiction.
 
\item Let us denote, for simplicity, $A=\sum\limits_{i=0}^{N-1} a_i $. For all $n\in \N$, 
\begin{align*}
\sum\limits_{i=0}^{\theta^*(n)} a_{i+N} &= \sum\limits_{i=N}^{\theta^*(n)+N} a_i \\
&\geq \sum\limits_{i=N}^{\theta(n+\left\lceil A \right\rceil)} a_i \quad \text{as } 
\theta^*(n)+N=(\theta(n+\left\lceil A \right\rceil)\remin N) + N \geq \theta(n+\left\lceil A \right\rceil) \\
&= \sum\limits_{i=0}^{\theta(n+\left\lceil A \right\rceil)}a_i - A
\geq n + \left\lceil A \right\rceil - \left\lceil A \right\rceil \geq n.
\end{align*}
If $(a_n) \subseteq [0,1]$, then  $\left\lceil A \right\rceil\leq N$, hence,  
$\theta(n+\left\lceil A \right\rceil)\leq \theta^+(n+\left\lceil A \right\rceil)\leq \theta^+(n+N)$, as $ \theta^+$ is increasing. It follows that 
$\sum\limits_{i=0}^{\theta^+(n+N)\remin N} a_{i+N} 
\geq \sum\limits_{i=0}^{\theta(n+\left\lceil A \right\rceil)\remin N} a_{i+N} 
\geq n$. 
	
\item For all $n\in \N$, 
$\sum\limits_{i=0}^{\chi(n)} ca_i = c\sum\limits_{i=0}^{\chi(n)} a_i \geq 
c\left\lceil\frac{n	}{c}\right\rceil\geq n$.

\item For all $n\in\N$,
\[\sum\limits_{i=0}^{\gamma(n)} (a_i+b_i)\ge \sum\limits_{i=0}^{\gamma(n)} a_i \ge n \quad \text{and} \quad
\sum\limits_{i=0}^{\theta(n)} (a_i+b_i)\ge \sum\limits_{i=0}^{\theta(n)} b_i \ge n,\]
\end{enumerate}
\end{proof}

\begin{lemma}\label{cauchy-conv-rate-linear-comb}
Let $(a_n)_{n\in\N}$, $(b_n)_{n\in\N}$ be sequences of real numbers, $q,r\in (0,\infty)$ and 
$\varphi_1, \varphi_2:\N\to \N$. Define $c_n=qa_n+rb_n$ for all $n\in \N$,	 and
\[\varphi:\N\to\N, \quad \varphi(k)=\max\left\{\varphi_1(\lceil 2q(k+1)\rceil-1), \varphi_2(\lceil 2r(k+1)\rceil-1)\right\}.\]
\begin{enumerate}
\item\label{cauchy-conv-rate-linear-comb-cauchy} If $\varphi_1$ is a Cauchy modulus of $(a_n)$ and 
$\varphi_2$ is a Cauchy modulus of $(b_n)$, then $\varphi$ is a Cauchy modulus of $(c_n)$. 
\item\label{cauchy-conv-rate-linear-comb-conv} If  $\lim\limits_{n \to \infty} a_n=a\in\R$ with 
rate of convergence  $\varphi_1$ and  $\lim\limits_{n \to \infty} b_n=b\in\R$ with rate of convergence $\varphi_2$, then 
$\lim\limits_{n \to \infty} c_n=qa+rb$ with rate of convergence $\varphi$.
\item\label{series-Cauchy-linear-comb} If $(a_n), (b_n) \subseteq [0,\infty)$, $\varphi_1$ is a Cauchy modulus of $\sum\limits_{n=0}^\infty a_n$ and 
$\varphi_2$ is a Cauchy modulus of $\sum\limits_{n=0}^\infty b_n$, then $\varphi$ is a Cauchy modulus of $\sum\limits_{n=0}^\infty c_n$.
\end{enumerate}
\end{lemma}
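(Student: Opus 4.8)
The plan is to derive all three parts from the triangle inequality together with the elementary estimate $\lceil x\rceil\geq x$, valid for every $x\in\R$; this reduces each statement to the defining inequality of a Cauchy modulus, respectively a rate of convergence, for $(a_n)$ and $(b_n)$, and the rounding built into $\varphi$ is precisely what absorbs the coefficients $q$ and $r$.

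First I would prove part~(\ref{cauchy-conv-rate-linear-comb-cauchy}). Fix $k,n,p\in\N$ with $n\geq\varphi(k)$. Since $q>0$ we have $2q(k+1)>0$, so $\lceil 2q(k+1)\rceil-1$ is a well-defined natural number, and $n\geq\varphi_1(\lceil 2q(k+1)\rceil-1)$; hence the Cauchy modulus property of $\varphi_1$ gives
\[|a_{n+p}-a_n|\leq\frac{1}{(\lceil 2q(k+1)\rceil-1)+1}=\frac{1}{\lceil 2q(k+1)\rceil}\leq\frac{1}{2q(k+1)},\]
the last step because $\lceil 2q(k+1)\rceil\geq 2q(k+1)>0$. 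Multiplying by $q$ yields $q|a_{n+p}-a_n|\leq\frac{1}{2(k+1)}$, and the symmetric argument with $\varphi_2$ and $r$ gives $r|b_{n+p}-b_n|\leq\frac{1}{2(k+1)}$. Therefore $|c_{n+p}-c_n|\leq q|a_{n+p}-a_n|+r|b_{n+p}-b_n|\leq\frac{1}{k+1}$, as required. Part~(\ref{cauchy-conv-rate-linear-comb-conv}) follows by exactly the same computation, with $|a_n-a|$ in place of $|a_{n+p}-a_n|$ and $|b_n-b|$ in place of $|b_{n+p}-b_n|$, using that $\varphi_1,\varphi_2$ are rates of convergence; one obtains $|c_n-(qa+rb)|\leq\frac{1}{k+1}$ for all $n\geq\varphi(k)$, so $c_n\to qa+rb$ with rate of convergence $\varphi$.

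For part~(\ref{series-Cauchy-linear-comb}) I would observe that, since $q,r>0$ and $(a_n),(b_n)\subseteq[0,\infty)$, the series $\sum_{n=0}^\infty c_n$ has nonnegative terms, and its sequence of partial sums is $q\tilde{a}_n+r\tilde{b}_n$, the corresponding linear combination of the partial sums of $\sum_{n=0}^\infty a_n$ and $\sum_{n=0}^\infty b_n$; as a Cauchy modulus of such a series is by definition a Cauchy modulus of its partial-sum sequence, the claim is immediate from part~(\ref{cauchy-conv-rate-linear-comb-cauchy}) applied to $(\tilde{a}_n)$ and $(\tilde{b}_n)$. There is no real obstacle in this lemma: the only points worth a moment's care are that the hypotheses $q,r>0$ are genuinely used — to ensure $\lceil 2q(k+1)\rceil-1$ and $\lceil 2r(k+1)\rceil-1$ are natural numbers and that dividing by $q$ and $r$ is harmless — and the rounding estimate $\lceil x\rceil\geq x$ that makes the chosen modulus work.
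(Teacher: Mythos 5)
Your proof is correct and follows essentially the same route as the paper: the same triangle-inequality computation with the ceiling estimate $\lceil x\rceil\geq x$ absorbing the coefficients $q,r$ (the paper writes this out explicitly for part~(ii) and delegates part~(i) to a cited lemma whose argument is exactly your computation), and part~(iii) obtained by applying part~(i) to the partial sums. Nothing is missing.
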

\begin{proof}
\begin{enumerate}
\item Adapt the proof of \cite[Lemma 3.2]{FirLeu25a}.
\item Let $k,n\in\N$ with $n\geq \varphi(k)$. We get that
\begin{align*}
|c_n-(qa+rb)| &=  |q(a_n-a) + r(b_n-b)|  \leq  q|a_n-a| + r|b_n-b| \\
& \leq   q\cdot\frac1{\lceil 2q(k+1)\rceil} + r\cdot\frac1{\lceil 2r(k+1)\rceil} \leq \frac1{k+1}.
\end{align*}
\item Apply \eqref{cauchy-conv-rate-linear-comb-cauchy}.
\end{enumerate}
\end{proof}

The following lemma gives quantitative versions of the well-known Xu's lemma \cite{Xu02}.

\begin{lemma}\label{quant-lem-Xu02-bncn}
Let $(a_n)_{n\in\N}, (b_n)_{n\in\N}, (c_n)_{n\in\N}, (s_n)_{n\in\N}$ be sequences of real numbers
such that $(a_n) \subseteq [0,1]$ and $(c_n), (s_n) \subseteq [0,\infty)$. Assume that $L\in\N^*$ is an upper bound on $(s_n)$ 
and $\sum\limits_{n=0}^\infty a_n$  diverges with rate of divergence $\theta$.
\begin{enumerate}
\item\label{quant-lem-Xu02-bncn-cn-0} Suppose that $s_{n+1}\leq (1-a_n)s_n + a_nb_n$ for all $n\in\N$ and 
that $\psi:\N\to\N$ satisfies the following: 
for all $k,n\in \N$  with $n \ge \psi(k)$, $b_n \le \frac{1}{k+1}$.  

Then $\lim\limits_{n \to \infty} s_n=0$ with rate of convergence $\Sigma$ defined by 
\begin{equation*}
\Sigma(k)=\theta\big(\psi(2k+1)+\lceil \ln(2L(k+1))\rceil\big)+1.
\end{equation*}
\item\label{quant-lem-Xu02-bncn-bn-0} Assume that $s_{n+1}\leq (1-a_n)s_n + c_n$ for all $n\in\N$ 
and  that $\sum\limits_{n=0}^\infty c_n$ converges with Cauchy modulus $\chi$.

Then $\lim\limits_{n \to \infty} s_n=0$ with rate of convergence $\Sigma$ defined by 
\begin{equation*}
\Sigma(k)=\theta\big(\chi(2k+1)+1+\lceil \ln(2L(k+1))\rceil\big)+1.
\end{equation*}
\end{enumerate}
\end{lemma}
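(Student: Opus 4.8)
The plan is to handle both parts by a single mechanism: for a fixed $k\in\N$ we locate an index $N$ beyond which the additive ``error'' is already at most $\frac{1}{2(k+1)}$, unroll the recursion from $N$, and control the residual contribution of $s_N$ via the decay factor $\prod_{i=N}^{m}(1-a_i)$.

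First I would establish, by a straightforward induction on $m\ge N$ (using $1-a_i\ge0$ since $a_i\le1$), the unrolled inequality
\[
s_{m+1}\ \le\ \Big(\prod_{i=N}^{m}(1-a_i)\Big)s_N\ +\ \sum_{i=N}^{m} e_i\prod_{j=i+1}^{m}(1-a_j),
\]
valid for all $N\le m$, where $e_i=a_ib_i$ in part~\eqref{quant-lem-Xu02-bncn-cn-0} and $e_i=c_i$ in part~\eqref{quant-lem-Xu02-bncn-bn-0}, with the convention that an empty product equals $1$. Two elementary facts then do the work: the telescoping identity
\[
\sum_{i=N}^{m} a_i\prod_{j=i+1}^{m}(1-a_j)\ =\ 1-\prod_{i=N}^{m}(1-a_i)\ \le\ 1,
\]
and the inequality $1-x\le e^{-x}$, which yields $\prod_{i=N}^{m}(1-a_i)\le\exp\!\big(-\sum_{i=N}^{m}a_i\big)$. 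The resulting key reduction is: if $m\ge N$ and $\sum_{i=N}^{m}a_i\ge\ln(2L(k+1))$, then, since $s_N\le L$, the first summand above is at most $L\exp\!\big(-\ln(2L(k+1))\big)=\frac{L}{2L(k+1)}=\frac{1}{2(k+1)}$.

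For part~\eqref{quant-lem-Xu02-bncn-cn-0} I would set $N=\psi(2k+1)$, so that $b_n\le\frac{1}{2(k+1)}$ for all $n\ge N$; then for $i\ge N$ one has $a_ib_i\prod_{j=i+1}^{m}(1-a_j)\le\frac{1}{2(k+1)}a_i\prod_{j=i+1}^{m}(1-a_j)$, and summing with the telescoping identity bounds the second summand by $\frac{1}{2(k+1)}$. For part~\eqref{quant-lem-Xu02-bncn-bn-0} I would set $N=\chi(2k+1)+1$; since $\chi$ is a Cauchy modulus of the partial sums $\tilde c_n$, for $N\le i\le m$ we get $\sum_{r=i}^{m}c_r=\tilde c_m-\tilde c_{i-1}\le\frac{1}{2(k+1)}$ (as $i-1\ge\chi(2k+1)$), so the second summand, being $\le\sum_{i=N}^{m}c_i$, is again $\le\frac{1}{2(k+1)}$. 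In both cases I would then take $n\ge\Sigma(k)$ and put $m=n-1$: by Lemma~\ref{rate-divergence-an-prop}\eqref{rate-divergence-an-01}, $\theta(p)\ge p-1$, and since $\lceil\ln(2L(k+1))\rceil\ge1$ this gives $m\ge\theta\big(N+\lceil\ln(2L(k+1))\rceil\big)\ge N$; by the defining property of a rate of divergence, $\sum_{i=0}^{m}a_i\ge N+\lceil\ln(2L(k+1))\rceil$, and since $\sum_{i=0}^{N-1}a_i\le N$ (each $a_i\le1$) we obtain $\sum_{i=N}^{m}a_i\ge\ln(2L(k+1))$. The key reduction then yields $s_n=s_{m+1}\le\frac{1}{2(k+1)}+\frac{1}{2(k+1)}=\frac{1}{k+1}$, which (recall $s_n\ge0$) is exactly what a rate of convergence to $0$ requires.

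The induction giving the unrolled inequality and the telescoping identity are routine. The only point calling for care is the index bookkeeping: making sure that $m\ge N$ so the decay factor is genuinely present, and passing from $\theta$ (which governs partial sums starting at $0$) to the needed lower bound on $\sum_{i=N}^{m}a_i$ by absorbing the initial block through the crude estimate $\sum_{i=0}^{N-1}a_i\le N$. I do not expect a genuine obstacle beyond this.
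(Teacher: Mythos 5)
Your proposal is correct. Note that the paper does not actually prove this lemma internally: it simply cites \cite{LeuPin21} (Proposition 4) for part (i) and \cite{LeuPin24} (Proposition 2.7) for part (ii). Your argument is a correct, self-contained derivation of both parts along the standard lines underlying those references: unrolling the recursion to
\[
s_{m+1}\le\Big(\prod_{i=N}^{m}(1-a_i)\Big)s_N+\sum_{i=N}^{m}e_i\prod_{j=i+1}^{m}(1-a_j),
\]
killing the first term via $1-x\le e^{-x}$ together with $s_N\le L$ and $\sum_{i=N}^m a_i\ge\ln(2L(k+1))$, and killing the second term via the telescoping identity (part (i)) or the tail bound from the Cauchy modulus $\chi$ with $N=\chi(2k+1)+1$ (part (ii)). The index bookkeeping is handled correctly: $\theta(p)\ge p-1$ from Lemma \ref{rate-divergence-an-prop}.\eqref{rate-divergence-an-01} and $\lceil\ln(2L(k+1))\rceil\ge 1$ give $m=n-1\ge N$, and $\sum_{i=0}^{N-1}a_i\le N$ converts the divergence bound at $0$ into the needed tail bound from $N$. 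Two small points worth noting, both unproblematic: in part (i) the $b_n$ are not assumed nonnegative, but your estimate $a_ib_i\prod_{j>i}(1-a_j)\le\frac{1}{2(k+1)}a_i\prod_{j>i}(1-a_j)$ only uses $b_i\le\frac{1}{2(k+1)}$ and nonnegativity of the weights, so it stands; and nonnegativity of $s_n$ is what lets the final bound $s_n\le\frac1{k+1}$ be read as the rate-of-convergence condition. The only difference from the paper is thus presentational: the paper outsources the proof, while you supply it explicitly, which is arguably a gain in self-containedness at no cost in length.
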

\begin{proof}
For (i) see \cite[Proposition 4]{LeuPin21}. 
For (ii) see \cite[Proposition 2.7]{LeuPin24}, a reformulation of \cite[Lemma 2.9(1)]{DinPin23}.
\end{proof}

We shall also use a slight reformulation of a  lemma due to Sabach and Shtern \cite[Lemma~3]{SabSht17}. 
 
\begin{lemma}\label{lem:sabach-shtern-v2}\cite[Lemma~2.8]{LeuPin24}\\
Let $L > 0$, $J \geq N \geq 2$, $\gamma \in (0, 1]$. Assume that $a_n = \frac{N}{\gamma(n + J)}$ for all $n\in\N$,  
$(c_n)_{n\in\N}$ is a sequence bounded above by $L$, $(s_n)_{n\in\N}$ is a sequence of nonnegative 
real numbers with $s_0 \leq L$ and that for all $n \in \N$,
\begin{align}
s_{n + 1} \leq (1 - \gamma a_{n + 1}) s_n + (a_n - a_{n + 1}) c_n. \label{eq:sabach-shtern-main-ineq}
\end{align}
Then, for all $n \in \N$,
\[s_n \leq \frac{J L}{\gamma(n + J)}.\]
\end{lemma}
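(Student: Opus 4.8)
The plan is to prove the estimate $s_n \leq \frac{JL}{\gamma(n+J)}$ by induction on $n$. For the base case $n=0$ we have $\frac{JL}{\gamma(0+J)} = \frac{L}{\gamma} \geq L \geq s_0$, since $\gamma \in (0,1]$ and $s_0 \leq L$.

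For the inductive step, suppose $s_n \leq \frac{JL}{\gamma(n+J)}$. First I would record some elementary facts about the coefficients: since $n+1+J \geq J \geq N$, we have $\gamma a_{n+1} = \frac{N}{n+1+J} \in (0,1]$, so $1-\gamma a_{n+1} \geq 0$ and hence $(1-\gamma a_{n+1})s_n \leq (1-\gamma a_{n+1})\frac{JL}{\gamma(n+J)}$; moreover $a_n - a_{n+1} = \frac{N}{\gamma}\cdot\frac{1}{(n+J)(n+1+J)} \geq 0$, so using $c_n \leq L$ we get $(a_n-a_{n+1})c_n \leq \frac{NL}{\gamma(n+J)(n+1+J)}$. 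Substituting these bounds into \eqref{eq:sabach-shtern-main-ineq} and writing $m := n+J$ for brevity yields
\[
s_{n+1} \leq \Bigl(1-\frac{N}{m+1}\Bigr)\frac{JL}{\gamma m} + \frac{NL}{\gamma m(m+1)} = \frac{L}{\gamma m(m+1)}\bigl(J(m+1-N)+N\bigr).
\]

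It then remains to verify that the last expression is at most $\frac{JL}{\gamma(m+1)} = \frac{JL}{\gamma(n+1+J)}$. After clearing the (positive) denominators this reduces to $J(m+1-N)+N \leq Jm$, i.e.\ $J+N \leq JN$, which holds because $(J-1)(N-1) \geq 1$ (as $J \geq N \geq 2$). This closes the induction. The argument is essentially routine algebra; the only points worth care are checking that $1-\gamma a_{n+1} \geq 0$, so that multiplying the induction hypothesis by this factor preserves the inequality, and that $a_n - a_{n+1} \geq 0$, so that the bound $c_n \leq L$ may be used in \eqref{eq:sabach-shtern-main-ineq}. Beyond this bookkeeping I do not anticipate any real obstacle.
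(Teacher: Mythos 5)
Your induction argument is correct, and it is essentially the standard proof of this estimate: the paper does not reprove the lemma but cites it from \cite[Lemma~2.8]{LeuPin24} (a reformulation of \cite[Lemma~3]{SabSht17}), where the same inductive computation — multiplying the hypothesis by $1-\gamma a_{n+1}\ge 0$, bounding $(a_n-a_{n+1})c_n$ by $(a_n-a_{n+1})L$, and reducing the step to $J+N\le JN$, i.e.\ $(J-1)(N-1)\ge 1$ — is carried out. No gaps.
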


\subsubsection{Rates of metastability}

Let $(X,d)$ be a metric space and $(x_n)_{n\in \N}$ be a sequence in $X$. We say that $(x_n)$ is metastable \cite{Tao08a} if
\begin{align}\label{def-metastability}
\forall k\in\N\, \forall g:\N \to \N\, \exists N\in\N\, \forall i,j \in [N;N + g(N)] \left(d(x_i, x_j) \leq \frac{1}{k + 1}\right).
\end{align}
One can easily see that $(x_n)$ is Cauchy if and only if $(x_n)$ is metastable. As pointed out by Tao \cite{Tao08a},
metastability is a concept in ``hard analysis" corresponding to Cauchyness, a concept in ``soft analysis". 
In mathematical logic, metastability of $(x_n)$ is known as the Herbrand normal form or the no-counterexample 
interpretation of Cauchyness. We refer to \cite{Koh08} for logical discussions on metastability. 

Rates of metastability are used to express quantitative versions of metastability. A rate of metastability of $(x_n)$ is a 
mapping $\Omega:\N\times \N^\N\to \N$  satisfying the following:
\begin{align}\label{def-rate-metastability}
\forall k\in\N\, \forall g:\N \to \N\, \exists N\leq \Omega(k,g)\, \forall i,j \in [N;N + g(N)] \left(d(x_i, x_j) \leq \frac{1}{k + 1}\right).
\end{align}
Obviously, if $(x_n)$ is Cauchy with Cauchy modulus $\varphi$, then the mapping $\Omega:\N\times \N^\N\to \N$, $\Omega(k,g)=\varphi(k)$ 
is a rate of metastability of $(x_n)$. The converse does not hold, from a rate of metastability of $(x_n)$ 
one cannot, in general, compute a Cauchy modulus of $(x_n)$.

Following \cite{LeuPin21}, for any $L\in\N^*$,  a rate of $L$-metastability of $(x_n)$
is a mapping $\Phi_L:\N\to \N$ satisfying \eqref{def-rate-metastability} with $g(n)=L\in\N$. Thus, $\Phi_L$ satisfies 
\begin{align}\label{def-rate-L-metastability}
\forall k\in \N \, \,\exists N\leq \Phi_L(k)\, \,\forall i, j\in [N;N+L]\, 
\left(d(x_i, x_j)\leq \frac{1}{k+1}\right).
\end{align}

\begin{fact}\label{relation-rates-meta-KohPin22}
In  this paper we use methods developed by Kohlenbach and Pinto \cite{KohPin22} to 
compute rates of metastability. In \cite{KohPin22}, a rate of metastability of $(x_n)$ is a mapping 
$\Omega:\R^+\times \N^\N\to \N$ satisfying 
\begin{align}\label{def-metastability-KohPin22}
\forall \varepsilon >0,g:\N \to \N\, \exists N\leq \Omega(\varepsilon,g) \,\forall i,j \in [N;g(N)] \left(d(x_i, x_j) \leq 
\varepsilon\right).
\end{align}
If $\Omega$ is a rate of metastability in our sense, then the mapping 
\[\Omega^*: \R^+\times \N^\N\to \N, \quad \Omega^*(\varepsilon,g)=\Omega\left(\left\lceil \frac1\varepsilon\right\rceil -1,g\right)\]
 satisfies \eqref{def-metastability-KohPin22}. 
Conversely, let $\Omega$ satisfy \eqref{def-metastability-KohPin22} and define
$\tilde{\Omega}:\N\times \N^\N\to \N$ as follows:
\begin{align*}
\tilde{\Omega}(k,g)=\Omega\left(\frac1{k+1},\tilde{g}\right), \qquad \text{where } \tilde{g}(n)=n+g(n).
\end{align*}
Then $\tilde{\Omega}$ is a rate of metastability in our sense.
\end{fact}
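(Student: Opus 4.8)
The plan is to check the two implications directly against the definitions; the only bookkeeping is the correspondence $\varepsilon \leftrightarrow \tfrac{1}{k+1}$ together with the fact that \eqref{def-rate-metastability} quantifies over the index block $[N;N+g(N)]$ whereas \eqref{def-metastability-KohPin22} quantifies over $[N;g(N)]$.

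First, I would assume $\Omega$ satisfies \eqref{def-rate-metastability} and fix $\varepsilon>0$ and $g:\N\to\N$. Set $k=\lceil 1/\varepsilon\rceil-1\in\N$, so that $k+1=\lceil 1/\varepsilon\rceil\geq 1/\varepsilon$ and hence $\tfrac{1}{k+1}\leq\varepsilon$. Applying \eqref{def-rate-metastability} to this $k$ and the given $g$ yields $N\leq\Omega(k,g)=\Omega^*(\varepsilon,g)$ with $d(x_i,x_j)\leq\tfrac{1}{k+1}\leq\varepsilon$ for all $i,j\in[N;N+g(N)]$. Since $[N;g(N)]\subseteq[N;N+g(N)]$, the same $N$ witnesses \eqref{def-metastability-KohPin22} for $\varepsilon$ and $g$, so $\Omega^*$ satisfies \eqref{def-metastability-KohPin22}.

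For the converse, I would assume $\Omega$ satisfies \eqref{def-metastability-KohPin22} and fix $k\in\N$ and $g:\N\to\N$. Apply \eqref{def-metastability-KohPin22} to $\varepsilon=\tfrac{1}{k+1}>0$ and to the function $\tilde{g}$, where $\tilde{g}(n)=n+g(n)$: this produces $N\leq\Omega\bigl(\tfrac{1}{k+1},\tilde{g}\bigr)=\tilde{\Omega}(k,g)$ with $d(x_i,x_j)\leq\tfrac{1}{k+1}$ for all $i,j\in[N;\tilde{g}(N)]$. Because $\tilde{g}(N)=N+g(N)$, this says precisely that $N\leq\tilde{\Omega}(k,g)$ witnesses \eqref{def-rate-metastability} for $k$ and $g$, so $\tilde{\Omega}$ is a rate of metastability in our sense.

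There is no genuine obstacle; the one place deserving a moment's attention is the mismatch between the two index-block conventions. In the first implication, passing from $[N;N+g(N)]$ to $[N;g(N)]$ is a harmless shrinking of the range and no change to $g$ is needed; in the converse one must feed $\tilde{g}$ rather than $g$ into $\Omega$ so that $[N;\tilde{g}(N)]$ is exactly $[N;N+g(N)]$. It is worth noting that the two passages are not mutually inverse — composing them does not recover the original rate — which is why the statement records $\Omega^*$ and $\tilde{\Omega}$ separately instead of asserting an equivalence of rates.
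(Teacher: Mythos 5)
Your verification is correct and is exactly the routine unwinding of the two definitions that the paper intends (the remark is stated without proof precisely because of this): the choice $k=\lceil 1/\varepsilon\rceil-1$ gives $\tfrac{1}{k+1}\leq\varepsilon$ with $[N;g(N)]\subseteq[N;N+g(N)]$, and feeding $\tilde{g}$ into $\Omega$ makes $[N;\tilde{g}(N)]=[N;N+g(N)]$. No gaps; your closing observation about the two conversions not being inverse to each other is accurate but not needed.
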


The following lemma adapts to our setting results from \cite{KohPin22}.

\begin{lemma}\label{rate-meta-H-Tn}
Let $\Omega$ be a rate of metastability of $(x_n)$ and define 
\begin{align*}
\Omega^+:\N\times \N^\N\times \N \to \N, & \quad  \Omega^+(k,g,n) = n  + \Omega(k,g_n), \\
\Omega^\dag:\N\times \N^\N\times \N \to \N, & \quad  
\Omega^\dag(k,g,n)=\max\{\Omega^+(k,g,i) \mid i\in[0;n]\},
\end{align*}
where $g_n:\N\to\N, \, \, g_n(m)=n+g(n+m)$.

The following hold:
\begin{enumerate}
\item For all  $k,n\in\N$, $g:\N \to \N$, 
\begin{align}\label{def-metastability-plus}
\exists N\in [n;\Omega^+(k,g,n)]\,\forall i,j \in [N;N+g(N)] \left(d(x_i, x_j) \leq \frac{1}{k + 1}\right).
\end{align}
\item\label{def-metastability-dag} holds with $\Omega^\dag$ instead of $\Omega^+$ and 
$\Omega^\dag$ is monotone in the third argument, that is: for all $k\in \N$, $g:\N\to \N$ and all 
$n,m\in \N$ with $n\leq m$, $\Omega^\dag(k,g,n)\leq \Omega^\dag(k,g,m)$.
\end{enumerate}
\end{lemma}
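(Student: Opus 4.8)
The plan is to deduce both parts from a single application of the hypothesis on $\Omega$, applied not to $g$ but to the shifted function $g_n$.

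For (i), fix $k,n\in\N$ and $g:\N\to\N$. Applying \eqref{def-rate-metastability} to the function $g_n$ gives some $N_0\leq\Omega(k,g_n)$ with $d(x_i,x_j)\leq\frac1{k+1}$ for all $i,j\in[N_0;N_0+g_n(N_0)]$. Put $N:=n+N_0$. Then $N\geq n$, and $N=n+N_0\leq n+\Omega(k,g_n)=\Omega^+(k,g,n)$, so $N\in[n;\Omega^+(k,g,n)]$. It remains to check that $[N;N+g(N)]\subseteq[N_0;N_0+g_n(N_0)]$, so that the estimate transfers. The lower endpoint is fine since $N=n+N_0\geq N_0$; for the upper endpoint, the definition of $g_n$ gives exactly $N+g(N)=n+N_0+g(n+N_0)=N_0+\bigl(n+g(n+N_0)\bigr)=N_0+g_n(N_0)$. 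This proves \eqref{def-metastability-plus}.

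For (ii), note that $n\in[0;n]$, so $\Omega^+(k,g,n)$ is one of the terms in the maximum defining $\Omega^\dag(k,g,n)$, whence $\Omega^+(k,g,n)\leq\Omega^\dag(k,g,n)$; consequently the witness $N$ from part (i) also lies in $[n;\Omega^\dag(k,g,n)]$, which yields the version of \eqref{def-metastability-plus} with $\Omega^\dag$ in place of $\Omega^+$. Monotonicity in the third argument is immediate: if $n\leq m$ then $[0;n]\subseteq[0;m]$, so the maximum over the smaller index set does not exceed the maximum over the larger one.

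There is no real obstacle here; the only point requiring attention is the choice of the auxiliary function. Shifting the argument of $g$ by $n$ while padding its value by $n$ produces precisely the $g_n$ for which the metastability window supplied by $\Omega$ is translated by $n$ without shrinking, so it still covers $[N;N+g(N)]$ once $N$ is taken to be $n+N_0$. This is the standard device (cf.\ \cite{KohPin22}) for converting a global rate of metastability into one that can be ``started'' at an arbitrary index $n$, as is needed when such rates are later fed into backward recursions.
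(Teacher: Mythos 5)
Your proof is correct and follows essentially the same route as the paper: apply the metastability of $\Omega$ to the shifted function $g_n$, set $N=n+N_0$, and observe that $g_n(N_0)=n+g(N)$ makes the interval $[N;N+g(N)]$ sit inside $[N_0;N_0+g_n(N_0)]$ (in fact the right endpoints coincide). Part (ii), which the paper dismisses as obvious, is spelled out correctly by you via $n\in[0;n]$ and $[0;n]\subseteq[0;m]$.
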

\begin{proof} 
\begin{enumerate}
\item Let $k,n\in\N$ and $g:\N\to \N$ be arbitrary. As $\Omega$ is a rate of metastability  
of $(x_n)$, there exists  $N'\leq \Omega(k,g_n)$ such that 
\begin{align*}
\forall i,j \in [N';N'+ g_n(N')] \left(d(x_i, x_j) \leq \frac{1}{k + 1}\right).
\end{align*}
Define $N=n+N'$. Then $N\in [n;n+\Omega(k,g_n)]=[n;\Omega^+(k,g,n)]$. Since $g_n(N')=n+g(n+N')=n+g(N)$,
we get that $N'\leq N \leq N+g(N) = n+N'+g(N) = N'+g_n(N')$, hence $[N;N+g(N)] \subseteq [N';N'+g_n(N')]$.
Thus, \eqref{def-metastability-plus} holds.
\item Obviously.
\end{enumerate}
\end{proof}

The following lemma, proved in \cite{LeuPin21}, will be very useful in Section \ref{rates-meta-thms}.

\begin{proposition} \label{xnyn-lemma-meta}
Let $(x_n)_{n\in \N}$, $(y_n)_{n\in \N}$ be sequences in $X$ such that 
$(x_n)$ is Cauchy with rate of metastability $\Omega$ and 
$\lim\limits_{n\to\infty} d(x_n,y_n)=0$ with rate of convergence $\varphi$.

Then $(y_n)$ is Cauchy with rate of metastability $\Gamma$ given by
\begin{equation*}
\Gamma(k,g)=\max\{\varphi(3k+2),\Omega(3k+2,h_{k,g})\},
\end{equation*}
where, for $k\in \N$ and $g:\N \to \N$, 
\begin{equation*}
h_{k,g}:\N\to \N, \quad h_{k,g}(m)= \max\{\varphi(3k+2),m\}-m+g(\max\{\varphi(3k+2),m\}).
\end{equation*}
\end{proposition}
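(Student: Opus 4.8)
The plan is to reduce the metastability of $(y_n)$ to that of $(x_n)$ via the triangle inequality, using $\lim_{n\to\infty}d(x_n,y_n)=0$ to exchange $y$-terms for $x$-terms. Fix $k\in\N$ and $g:\N\to\N$. Since $d(y_i,y_j)\le d(y_i,x_i)+d(x_i,x_j)+d(x_j,y_j)$, it suffices to arrange that each of the three summands is at most $\frac{1}{3(k+1)}=\frac{1}{(3k+2)+1}$ on the relevant window of indices; this is where the shift $k\mapsto 3k+2$ in $\Gamma$ comes from.

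First I would apply the hypothesis on $(x_n)$ with parameter $3k+2$ and the auxiliary function $h_{k,g}$: this produces some $N'\le\Omega(3k+2,h_{k,g})$ such that $d(x_i,x_j)\le\frac{1}{3k+3}$ for all $i,j\in[N';N'+h_{k,g}(N')]$. (Note that $h_{k,g}$ is a well-defined element of $\N^\N$ because $\max\{\varphi(3k+2),m\}\ge m$ always.) Then I set $N=\max\{\varphi(3k+2),N'\}$; by construction $N\le\max\{\varphi(3k+2),\Omega(3k+2,h_{k,g})\}=\Gamma(k,g)$, and $N\ge N'$.

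The key computation is the window inclusion $[N;N+g(N)]\subseteq[N';N'+h_{k,g}(N')]$. Plugging $m=N'$ into the definition of $h_{k,g}$ gives $h_{k,g}(N')=\max\{\varphi(3k+2),N'\}-N'+g(\max\{\varphi(3k+2),N'\})=N-N'+g(N)$, hence $N'+h_{k,g}(N')=N+g(N)$; together with $N'\le N$ this yields the inclusion, so $d(x_i,x_j)\le\frac{1}{3k+3}$ for all $i,j\in[N;N+g(N)]$. Moreover, since $N\ge\varphi(3k+2)$, every index $i\ge N$ satisfies $d(x_i,y_i)\le\frac{1}{3k+3}$ by the rate of convergence $\varphi$. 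Feeding these three estimates into the triangle inequality gives $d(y_i,y_j)\le\frac{3}{3k+3}=\frac{1}{k+1}$ for all $i,j\in[N;N+g(N)]$, which is exactly what $\Gamma$ is required to certify.

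I do not expect a genuine obstacle: the only nonroutine point is identifying the correct choice of $h_{k,g}$, which is forced by the requirement that the metastability window obtained at $N'$ survive being pushed up to the starting index $N=\max\{\varphi(3k+2),N'\}$ dictated by the convergence rate — a standard maneuver in proof mining. Everything else is bookkeeping with the triangle inequality and the inequality $\max\{\varphi(3k+2),m\}\ge m$.
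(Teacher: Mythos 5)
Your proof is correct, and it is essentially the argument the paper relies on: the paper simply cites the proof of Proposition 7 of Leu\c{s}tean--Pinto (2021), which proceeds by exactly this maneuver — invoke the metastability of $(x_n)$ at $3k+2$ with the shifted counterfunction $h_{k,g}$, set $N=\max\{\varphi(3k+2),N'\}$, observe $N'+h_{k,g}(N')=N+g(N)$ so the window survives, and finish with the triangle inequality. Nothing further is needed.
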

\begin{proof}
Extend the proof of \cite[Proposition 7]{LeuPin21} to metric spaces.
\end{proof}

\subsection{Quantitative asymptotic regularity}

Asymptotic regularity is a well-known notion in optimization, nonlinear analysis and metric 
fixed point theory, introduced by Browder and Petryshyn in the 1960s \cite{BroPet66} for 
the Picard iteration of nonexpansive mappings in metric spaces. In the 1990s, Borwein, Reich and Shafrir 
\cite{BorReiSha92} extended the definition of asymptotic regularity to nonlinear iterations.

Let $(X,d)$ be a metric space, $C$ a nonempty subset of $X$, $(x_n)_{n\in\N}$ a sequence in $C$,
$T:C\to C$ a mapping, and $(T_n:C\to C)_{n\in \N}$ a sequence of mappings. We shall use in 
this paper the following asymptotic regularity-type notions, together with their quantitative versions:
\begin{enumerate}
\item $(x_n)$ is asymptotically regular if $\lim\limits_{n\to \infty} d(x_n,x_{n+1})=0$. A rate of asymptotic regularity of $(x_n)$ is a rate 
of convergence to $0$ of $\big(d(x_n,x_{n+1})\big)$.
\item $(x_n)$ is $T$-asymptotically regular if $\lim\limits_{n\to \infty} d(x_n,Tx_n)=0$. A rate of $T$-asymptotic regularity of $(x_n)$ is a rate 
of convergence to $0$ of $\big(d(x_n,Tx_n)\big)$.
\item $(x_n)$ is $(T_n)$-asymptotically regular if $\lim\limits_{n\to \infty} d(x_n,T_nx_n)=0$. A rate of $(T_n)$-asymptotic regularity of $(x_n)$ is a rate 
of convergence to $0$ of $\big(d(x_n,T_nx_n)\big)$.
\end{enumerate}

\begin{lemma}\label{as-reg-L-meta}
Assume that $(x_n)$ is asymptotically regular with rate $\varphi$. Then for any $L\in \N^*$, 
\[\varphi_L: \N \to \N, \qquad \varphi_L(k)=\varphi(L(k+1)-1)\]
is a rate of $L$-metastability of $(x_n)$.
\end{lemma}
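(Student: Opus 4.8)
The plan is to show directly that for every $k\in\N$ the index $N=\varphi(L(k+1)-1)$ witnesses the $L$-metastability property \eqref{def-rate-L-metastability}, so that $\varphi_L(k)=\varphi(L(k+1)-1)$ is the desired rate. First I would fix $k\in\N$ and set $N=\varphi_L(k)=\varphi(L(k+1)-1)$. Since $\varphi$ is a rate of asymptotic regularity of $(x_n)$ — that is, a rate of convergence to $0$ of $\big(d(x_n,x_{n+1})\big)$ — and $N\geq \varphi(L(k+1)-1)$, we have $d(x_n,x_{n+1})\leq \frac{1}{L(k+1)}$ for every $n\geq N$; in particular this holds for all $n\in[N;N+L-1]$.

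The key step is then a telescoping estimate: for any $i,j\in[N;N+L]$ with, say, $i\leq j$, the triangle inequality gives
\[
d(x_i,x_j)\leq \sum_{n=i}^{j-1} d(x_n,x_{n+1}) \leq (j-i)\cdot\frac{1}{L(k+1)} \leq L\cdot\frac{1}{L(k+1)} = \frac{1}{k+1},
\]
where we used $j-i\leq L$. The case $j\leq i$ is symmetric, and the case $i=j$ is trivial. Hence $d(x_i,x_j)\leq\frac{1}{k+1}$ for all $i,j\in[N;N+L]$, which is exactly \eqref{def-rate-L-metastability} with $g(n)=L$, so $\varphi_L$ is a rate of $L$-metastability of $(x_n)$.

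There is essentially no obstacle here: the only point to be careful about is the bookkeeping with the $+1$'s and the ceiling-free arithmetic, namely checking that $L(k+1)-1$ is the right argument to feed into $\varphi$ so that the per-step bound is $\frac{1}{L(k+1)}$ and the sum of at most $L$ such terms is at most $\frac{1}{k+1}$. Everything else is a routine application of the triangle inequality together with the definition of a rate of convergence.
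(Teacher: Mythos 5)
Your proof is correct and follows essentially the same route as the paper: take $N=\varphi_L(k)$, use the rate $\varphi$ to bound each consecutive step by $\frac{1}{L(k+1)}$, and telescope over at most $L$ steps via the triangle inequality. Nothing further is needed.
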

\begin{proof}
 Let $k\in\N$. Obviously,  $d(x_l,x_{l+1}) \leq \frac{1}{L(k+1)}$ for  $l\geq \varphi_L(k)$. 
We get that  for all $i,j\in [\varphi_L(k),\varphi_L(k)+L]$,
\begin{align*}
d(x_i,x_j) & \le \sum_{l=i}^{j-1} d(x_l,x_{l+1}) \le  
\sum_{l=\varphi_L(k)}^{\varphi_L(k)+L-1} d(x_l,x_{l+1}) 
\le  \sum_{l=\varphi_L(k)}^{\varphi_L(k)+L-1} \frac1{L(k+1)}  = 
\frac{1}{k+1}.
\end{align*}
Thus, \eqref{def-rate-L-metastability} holds with $N=\varphi_L(k)$.
\end{proof}

\section{genVAM in $W$-hyperbolic spaces}\label{genVAM-hyperbolic}

Throughout this section, $X$ is a $W$-hyperbolic space,  $\emptyset \ne C \subseteq X$ is a convex subset of $X$,  
$f:C\to C$ is an $\alpha$-contraction with $\alpha\in[0,1)$ and $(T_n:C\to C)_{n\in \N}$ is a sequence of nonexpansive 
mappings such that
\[F:=\bigcap_{n\in\N} Fix(T_n) \neq \emptyset.\]

The generalized Viscosity Approximation Method (genVAM) is defined as follows: 
\begin{equation}\label{def-genVAM}
genVAM \qquad x_0\in C, \qquad x_{n+1}=\alpha_n f(x_n) +(1-\alpha_n)T_nx_n,
\end{equation}
where $(\alpha_n)_{n\in\N}$ is a sequence in $[0,1]$.

Let $z\in F$ and take $K_z \in\N^*$ such that 
\begin{align}\label{def-Kz-main}
K_z \geq \max\left\{d(x_0,z),\frac{d(f(z),z)}{1-\alpha}\right\}.
\end{align}

\begin{lemma}\label{xn-bound-as-reg-hyp}
For all $m,n\in\N$, 
\begin{enumerate}
\item\label{dx-Tnx-bound-hyp}  $d(y,T_ny) \leq 2d(y,z)$ for all $y\in C$;
\item\label{xn-bound-hyp} $d(x_n,z) \le K_z$;
\item\label{xn-bound-hyp-Tm-f} $d(T_mx_n,z), d(f(x_n),z)\le K_z$;
\item\label{xn-Tnm-as-reg-hyp} $d(x_{n+1},x_n), d(T_mx_n,x_n), d(T_mx_n,f(x_n)) \leq 2K_z$.
\end{enumerate}
\end{lemma}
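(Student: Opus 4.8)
The plan is to prove the four items in the order given, since each one builds on the previous; everything reduces to the triangle inequality combined with the nonexpansivity of the $T_n$, the $\alpha$-contraction property of $f$, and the defining inequality \eqref{def-Kz-main} for $K_z$. For item \eqref{dx-Tnx-bound-hyp} I would use that $z\in F$, so $T_nz=z$, and estimate $d(y,T_ny)\le d(y,z)+d(z,T_ny)=d(y,z)+d(T_nz,T_ny)\le 2d(y,z)$ using nonexpansivity of $T_n$.

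Item \eqref{xn-bound-hyp} is the core of the lemma and I would prove it by induction on $n$. The base case $n=0$ is immediate from \eqref{def-Kz-main}. For the step, assume $d(x_n,z)\le K_z$; applying (W1) to the convex combination defining $x_{n+1}$ gives $d(x_{n+1},z)\le \alpha_n d(f(x_n),z)+(1-\alpha_n)d(T_nx_n,z)$. I then bound $d(T_nx_n,z)=d(T_nx_n,T_nz)\le d(x_n,z)\le K_z$ and $d(f(x_n),z)\le d(f(x_n),f(z))+d(f(z),z)\le \alpha d(x_n,z)+d(f(z),z)\le \alpha K_z+(1-\alpha)K_z=K_z$, where the last inequality uses $d(f(z),z)\le(1-\alpha)K_z$ from \eqref{def-Kz-main}; since both terms are $\le K_z$, so is the convex combination. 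Item \eqref{xn-bound-hyp-Tm-f} follows at once: $d(T_mx_n,z)=d(T_mx_n,T_mz)\le d(x_n,z)\le K_z$ by \eqref{xn-bound-hyp} and nonexpansivity, and $d(f(x_n),z)\le\alpha d(x_n,z)+d(f(z),z)\le K_z$ exactly as above. Finally, item \eqref{xn-Tnm-as-reg-hyp} is obtained by passing through $z$ with the triangle inequality and invoking \eqref{xn-bound-hyp} and \eqref{xn-bound-hyp-Tm-f}, e.g. $d(x_{n+1},x_n)\le d(x_{n+1},z)+d(z,x_n)\le 2K_z$, and analogously for $d(T_mx_n,x_n)$ and $d(T_mx_n,f(x_n))$.

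There is no genuine obstacle here; the one point worth highlighting is that the somewhat unusual second term $d(f(z),z)/(1-\alpha)$ in the bound \eqref{def-Kz-main} is exactly what makes the contraction estimate $\alpha K_z+d(f(z),z)$ collapse back to $K_z$ in the induction step, which is what keeps the orbit bounded uniformly in $n$ and, via the triangle inequality, yields the uniform bound $2K_z$ in item \eqref{xn-Tnm-as-reg-hyp}.
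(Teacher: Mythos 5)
Your proposal is correct and follows essentially the same route as the paper: induction on $n$ using (W1), nonexpansivity of $T_n$, the $\alpha$-contraction property of $f$, and the choice \eqref{def-Kz-main} of $K_z$, with (iii) and (iv) deduced by the triangle inequality through $z$. The only cosmetic difference is that you bound each term of the convex combination by $K_z$ before recombining, while the paper carries the combination through and simplifies at the end; the estimates are identical.
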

\begin{proof}
\begin{enumerate}
\item We have that $d(y,T_ny) \leq d(y,z) + d(z, T_ny) = d(y,z) + d(T_nz, T_ny) \leq 2 d(y,z)$, as $z\in F$ and $T_n$ is nonexpansive.
\item The proof is by induction on $n$. The case $n=0$ is obvious, by the definition of $K_z$. \\
$n \Rightarrow n+1$:
\begin{align*}
d(x_{n+1},z)&=d(\alpha_n f(x_n) +(1-\alpha_n)T_nx_n,z)   \stackrel{(W1)}{\le} \alpha_nd(f(x_n),z)+(1-\alpha_n)d(T_nx_n,z)\\
&\le \alpha_n (d(f(x_n),f(z))+d(f(z),z))+(1-\alpha_n)d(T_nx_n,z)\\
&\le \alpha_n (\alpha d(x_n,z) + d(f(z),z))+(1-\alpha_n)d(T_nx_n,z)  \quad  \text{as~}f \text{~is an~} \alpha\text{-contraction}\\
&= \alpha_n (\alpha d(x_n,z) + d(f(z),z))+(1-\alpha_n)d(T_nx_n,T_n z)   \quad  \text{as~} z\in F \\
&\le (\alpha_n\alpha+ 1-\alpha_n)d(x_n,z)+\alpha_nd(f(z),z) \quad \text{as~}  T_n \text{~is nonexpansive} \\
&= (1-(1-\alpha)\alpha_n)d(x_n,z)+(1-\alpha)\alpha_n\frac{d(f(z),z)}{1-\alpha}\\
&\le (1-(1-\alpha)\alpha_n)K_z + (1-\alpha)\alpha_n K_z \quad 
\text{by the induction hypothesis and \eqref{def-Kz-main}}\\
&=K_z.
\end{align*}
\item We have that $d(T_mx_n,z) = d(T_mx_n,T_mz) \le d(x_n,z) \stackrel{\eqref{xn-bound-hyp}}{\le} K_z$. Furthermore, 
\begin{align*}
d(f(x_n),z) & \le d(f(x_n),f(z))+d(f(z),z) \le \alpha d(x_n,z)+d(f(z),z) \stackrel{\eqref{xn-bound-hyp}}{\le} \alpha K_z+d(f(z),z) \\
& \stackrel{\eqref{def-Kz-main}}{\le} \alpha K_z +(1-\alpha)K_z = K_z.
\end{align*}
\item Use \eqref{xn-bound-hyp} and  \eqref{xn-bound-hyp-Tm-f} to get that 
$d(x_{n+1},x_n)  \le d(x_n,z)+d(x_{n+1},z) \le 2K_z$, 
$d(T_mx_n,x_n) \le d(T_mx_n,z) + d(z,x_n) \le 2K_z$ and 
$d(T_mx_n,f(x_n)) \le d(T_mx_n,z) + d(z,f(x_n)) \le 2K_z$.
\end{enumerate}
\end{proof}

\begin{lemma}
For all $n\in\N$, 
\begin{equation}\label{dxnTnxn-leq}
d(x_n, T_nx_n) \leq d(x_n, x_{n+1}) + 2K_z\alpha_n.
\end{equation}
\end{lemma}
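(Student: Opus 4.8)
The plan is to derive the bound directly from the triangle inequality, the defining recursion of the genVAM iteration, and the distance identity \eqref{W10-W-hyp}. First I would write
\[
d(x_n,T_nx_n)\le d(x_n,x_{n+1})+d(x_{n+1},T_nx_n),
\]
so that everything reduces to estimating the single term $d(x_{n+1},T_nx_n)$.

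To handle this term, I would rewrite the iteration step $x_{n+1}=\alpha_n f(x_n)+(1-\alpha_n)T_nx_n$ as the abstract convex combination $x_{n+1}=(1-\alpha_n)T_nx_n+\alpha_n f(x_n)$, i.e.\ $W(T_nx_n,f(x_n),\alpha_n)$ in the original notation. Applying the first identity in \eqref{W10-W-hyp} with $x:=T_nx_n$, $y:=f(x_n)$ and $\lambda:=\alpha_n$ then gives $d(T_nx_n,x_{n+1})=\alpha_n\,d(T_nx_n,f(x_n))$. Finally, Lemma~\ref{xn-bound-as-reg-hyp}\eqref{xn-Tnm-as-reg-hyp}, instantiated at $m:=n$, yields $d(T_nx_n,f(x_n))\le 2K_z$, hence $d(x_{n+1},T_nx_n)\le 2K_z\alpha_n$. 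Substituting this into the triangle inequality above produces the claimed estimate.

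This argument is short and essentially computational, so there is no genuine obstacle. The only points requiring care are: (i) matching the order of the two points in the $W$-convex combination to the appropriate form of \eqref{W10-W-hyp}, so that the coefficient which appears is $\alpha_n$ and not $1-\alpha_n$; and (ii) using the instance $m=n$ (rather than a generic $m$) of Lemma~\ref{xn-bound-as-reg-hyp}\eqref{xn-Tnm-as-reg-hyp}, so that it indeed bounds $d(T_nx_n,f(x_n))$.
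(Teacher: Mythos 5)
Your proof is correct and follows essentially the same route as the paper: triangle inequality via $x_{n+1}$, the identity \eqref{W10-W-hyp} to get $d(x_{n+1},T_nx_n)=\alpha_n d(f(x_n),T_nx_n)$ (you just use the first identity after swapping the roles of the two points, which by (W3) is the same as the paper's use of the second), and Lemma~\ref{xn-bound-as-reg-hyp}.\eqref{xn-Tnm-as-reg-hyp} with $m=n$ for the bound $2K_z$. Nothing to add.
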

\begin{proof}
Let $n\in \N$. We have that 
\begin{align*}
d(x_n, T_nx_n) & \leq d(x_n, x_{n+1}) + d(x_{n+1}, T_nx_n)
= d(x_n, x_{n+1}) + d(\alpha_n f(x_n) +(1-\alpha_n)T_nx_n, T_nx_n) \\
& \stackrel{\eqref{W10-W-hyp}}{=} d(x_n, x_{n+1}) + \alpha_n d(f(x_n),T_nx_n)  \leq d(x_n, x_{n+1}) + 2K_z\alpha_n \quad \text{by Lemma \ref{xn-bound-as-reg-hyp}.\eqref{xn-Tnm-as-reg-hyp}}.
\end{align*}
\end{proof}

\subsection{Resolvent-like conditions for $(T_n)$}\label{subsection-resolvent-like}

For every sequence $(\lambda_n)$ of positive reals and $m,n\in\N$, we say that 
the family $(T_n)$ satisfies $Res((\lambda_n),n,m)$ if the following holds: 
\begin{equation}\label{def-Res-lambda-n-m}
d(T_n y, T_m y) \leq \left| 1-\frac{\lambda_m}{\lambda_n} \right|d(y, T_n y) \quad \text{for all $y\in C$}.
\end{equation}

The second author, Nicolae and Sipo\c s introduced condition \eqref{def-Res-lambda-n-m} as condition (C1) in \cite{LeuNicSip18}, 
where they study abstract versions of the proximal point algorithm. 
Examples of families of mappings $(T_n)$ that satisfy $Res((\lambda_n),n,m)$ for all $m,n\in\N$ are:
\begin{enumerate}
\item families $(T_n)$ that are jointly $(P_2)$ or jointly firmly nonexpansive with respect to $(\lambda_n)$ in the setting of CAT(0) spaces. These 
families of mappings are defined in \cite{LeuNicSip18} and studied further in \cite{Sip22,Sip22a}.
\item the family $(J_{\lambda_nf})$  of proximal mappings of a proper convex lower semicontinuous function $f : X \to (-\infty, +\infty]$, 
where $X$ is a complete CAT(0) space (by \cite[Proposition~3.17]{LeuNicSip18}).
\item the family $(R_{T,\lambda_n})$ of resolvents of a nonexpansive mapping $T:X\to X$ in a complete CAT(0) space $X$ (by \cite[Proposition~3.19]{LeuNicSip18}).
\item the family $(J_{\lambda_nA})$  of resolvents of a maximally monotone operator $A:X \to 2^X$ in a Hilbert space $X$ (by \cite[Proposition~3.21]{LeuNicSip18}).
\item the family $(J_{\lambda_nA})$  of resolvents of an $m$-accretive operator $A:X \to 2^X$ in a normed space $X$ (by \cite[Lemma 2.1]{FirLeu25}).
\end{enumerate}

As pointed out in \cite[Lemma 3.1]{LeuNicSip18}, if $(T_n)$ satisfies $Res((\lambda_n),n,m)$ for all $m,n\in\N$, then $Fix(T_n)=F$ for all $n\in \N$.

\begin{proposition}
Let $n\in\N$ and $\beta_n= 1-(1-\alpha)\alpha_{n+1}$. 
\begin{enumerate}
\item Assume that $(T_n)$ satisfies $Res((\lambda_n),n,n+1)$. Then 
\begin{align}
d(x_{n+2},x_{n+1}) \le & \beta_nd(x_{n+1},x_n) +2K_z\left(|\alpha_{n+1}-\alpha_n|+ (1-\alpha_{n+1})\left|1-\frac{\lambda_{n+1}}{\lambda_n} \right|\right).
\label{main-ineq-apply-Xu-hyp}
\end{align}
\item Assume that $(T_n)$ satisfies $Res((\lambda_n),n+1,n)$. Then 
\begin{align}
d(x_{n+2},x_{n+1}) \le & \beta_nd(x_{n+1},x_n) +2K_z\left(|\alpha_{n+1}-\alpha_n|+ (1-\alpha_{n+1})\left|1-\frac{\lambda_n}{\lambda_{n+1}} \right|\right).
\label{main-ineq-apply-Xu-hyp-star}
\end{align}
\end{enumerate}
\end{proposition}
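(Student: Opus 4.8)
The plan is to estimate $d(x_{n+2},x_{n+1})$ directly from the genVAM recursion, using the $W$-hyperbolic inequality \eqref{W42-xyzw-ineq1-1} to handle the fact that $x_{n+2}$ and $x_{n+1}$ are abstract convex combinations with different parameters $\alpha_{n+1}$ and $\alpha_n$. Writing $x_{n+2}=\alpha_{n+1}f(x_{n+1})+(1-\alpha_{n+1})T_{n+1}x_{n+1}$ and $x_{n+1}=\alpha_n f(x_n)+(1-\alpha_n)T_nx_n$, I would apply \eqref{W42-xyzw-ineq1-1} with the two ``endpoints'' being $f(x_{n+1}),f(x_n)$ and $T_{n+1}x_{n+1},T_nx_n$, and parameters $\alpha_{n+1},\alpha_n$. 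This yields
\[
d(x_{n+2},x_{n+1})\le (1-\alpha_{n+1})d(T_{n+1}x_{n+1},T_nx_n)+\alpha_{n+1}d(f(x_{n+1}),f(x_n))+|\alpha_{n+1}-\alpha_n|\,d(f(x_n),T_nx_n).
\]
The last term is bounded by $2K_z|\alpha_{n+1}-\alpha_n|$ by Lemma \ref{xn-bound-as-reg-hyp}.\eqref{xn-Tnm-as-reg-hyp}, and the $f$-term is bounded by $\alpha_{n+1}\alpha\, d(x_{n+1},x_n)$ since $f$ is an $\alpha$-contraction.

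The key step is the middle term $d(T_{n+1}x_{n+1},T_nx_n)$, where the resolvent-like condition enters. In case (i), I would insert $T_nx_n$ as an intermediate point via the triangle inequality: $d(T_{n+1}x_{n+1},T_nx_n)\le d(T_{n+1}x_{n+1},T_nx_{n+1})+d(T_nx_{n+1},T_nx_n)$. The second summand is $\le d(x_{n+1},x_n)$ by nonexpansiveness of $T_n$; the first summand, by $Res((\lambda_n),n,n+1)$ applied at the point $y=x_{n+1}$, is $\le |1-\lambda_{n+1}/\lambda_n|\,d(x_{n+1},T_{n+1}x_{n+1})\le 2K_z|1-\lambda_{n+1}/\lambda_n|$ using Lemma \ref{xn-bound-as-reg-hyp}.\eqref{xn-Tnm-as-reg-hyp} again (note $d(x_{n+1},T_{n+1}x_{n+1})=d(T_{n+1}x_{n+1},x_{n+1})\le 2K_z$). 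Collecting terms gives
\[
d(x_{n+2},x_{n+1})\le \big((1-\alpha_{n+1})+\alpha_{n+1}\alpha\big)d(x_{n+1},x_n)+2K_z\Big(|\alpha_{n+1}-\alpha_n|+(1-\alpha_{n+1})\big|1-\tfrac{\lambda_{n+1}}{\lambda_n}\big|\Big),
\]
and $(1-\alpha_{n+1})+\alpha_{n+1}\alpha=1-(1-\alpha)\alpha_{n+1}=\beta_n$, which is exactly \eqref{main-ineq-apply-Xu-hyp}.

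For case (ii) the argument is identical except that the splitting is done the other way: $d(T_{n+1}x_{n+1},T_nx_n)\le d(T_{n+1}x_{n+1},T_{n+1}x_n)+d(T_{n+1}x_n,T_nx_n)$, the first summand $\le d(x_{n+1},x_n)$ by nonexpansiveness of $T_{n+1}$, and the second, by $Res((\lambda_n),n+1,n)$ applied at $y=x_n$, is $\le |1-\lambda_n/\lambda_{n+1}|\,d(x_n,T_{n+1}x_n)\le 2K_z|1-\lambda_n/\lambda_{n+1}|$, again via Lemma \ref{xn-bound-as-reg-hyp}.\eqref{xn-Tnm-as-reg-hyp}. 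This produces \eqref{main-ineq-apply-Xu-hyp-star}. I do not expect any serious obstacle here — the only point requiring mild care is applying the $W$-hyperbolic inequality \eqref{W42-xyzw-ineq1-1} with the correct assignment of arguments so that the ``defect'' term is $|\alpha_{n+1}-\alpha_n|\,d(f(x_n),T_nx_n)$ rather than a distance involving $x_{n+1}$-indexed points, and matching uniformly with the bound $2K_z$ from Lemma \ref{xn-bound-as-reg-hyp}.
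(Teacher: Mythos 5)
Your proof is correct and follows essentially the same route as the paper: apply the $W$-hyperbolic inequality to the two convex combinations, split $d(T_{n+1}x_{n+1},T_nx_n)$ via a triangle inequality, and use nonexpansiveness together with the resolvent-like condition and the $2K_z$ bounds of Lemma \ref{xn-bound-as-reg-hyp}.\eqref{xn-Tnm-as-reg-hyp} (the paper inserts $T_{n+1}x_n$ in both cases, whereas in case (i) you insert $T_nx_{n+1}$, which works equally well). One minor slip: $Res((\lambda_n),n,n+1)$ applied at $y=x_{n+1}$ gives the factor $d(x_{n+1},T_nx_{n+1})$, not $d(x_{n+1},T_{n+1}x_{n+1})$, but since both are bounded by $2K_z$ by the same lemma, the final estimate is unaffected.
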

\begin{proof}
We have that 
\begin{align*}
d(x_{n+2}, x_{n+1}) &= d(\alpha_{n+1}f(x_{n+1}) + (1-\alpha_{n+1})T_{n+1}x_{n+1}, \alpha_nf(x_n) + (1-\alpha_n)T_nx_n)\\
& \stackrel{\eqref{W42-xyzw-ineq1-2}}{\le}  \alpha_{n+1}d(f(x_{n+1}),f(x_n))+(1-\alpha_{n+1})d(T_{n+1}x_{n+1},T_nx_n)\\
& \quad  + |\alpha_{n+1}-\alpha_n|d(f(x_n),T_nx_n)\\
& \le \alpha\alpha_{n+1}d(x_{n+1},x_n)+(1-\alpha_{n+1})d(T_{n+1}x_{n+1},T_nx_n)\\
& \quad  + |\alpha_{n+1}-\alpha_n|d(f(x_n),T_nx_n) \quad \text{as $f$ is an $\alpha$-contraction}\\
& \le \alpha\alpha_{n+1}d(x_{n+1},x_n)+(1-\alpha_{n+1})\left(d(T_{n+1}x_{n+1},T_{n+1}x_n)+d(T_{n+1}x_n,T_nx_n)\right)\\
& \quad  + |\alpha_{n+1}-\alpha_n|d(f(x_n),T_nx_n)\\
& \le \alpha\alpha_{n+1}d(x_{n+1},x_n)+(1-\alpha_{n+1})d(x_{n+1},x_n)+(1-\alpha_{n+1})d(T_{n+1}x_n,T_nx_n)\\
& \quad  + |\alpha_{n+1}-\alpha_n|d(f(x_n),T_nx_n)\quad \text{as $T_{n+1}$ is nonexpansive}\\
& \le \beta_nd(x_{n+1},x_n) + (1-\alpha_{n+1})d(T_{n+1}x_n,T_nx_n) + |\alpha_{n+1}-\alpha_n|d(f(x_n),T_nx_n).
\end{align*}
Apply now Lemma \ref{xn-bound-as-reg-hyp}.\eqref{xn-Tnm-as-reg-hyp} to get that 
\begin{equation}\label{dxns-xn1-int}
d(x_{n+2}, x_{n+1})  \le \beta_nd(x_{n+1},x_n) + (1-\alpha_{n+1})d(T_{n+1}x_n,T_nx_n) + 2K_z|\alpha_{n+1}-\alpha_n|.
\end{equation}

\begin{enumerate}
\item By $Res((\lambda_n),n,n+1)$ and Lemma \ref{xn-bound-as-reg-hyp}.\eqref{xn-Tnm-as-reg-hyp} we obtain that 
\begin{align*}
d(T_nx_n,T_{n+1}x_n) \le \left|1-\frac{\lambda_{n+1}}{\lambda_n}\right|d(x_n,T_nx_n) \leq  2K_z\left|1-\frac{\lambda_{n+1}}{\lambda_n} \right|.
\end{align*}
Then \eqref{main-ineq-apply-Xu-hyp} follows immediately from  \eqref{dxns-xn1-int}.
\item We get \eqref{main-ineq-apply-Xu-hyp-star} in a similar way. 
\end{enumerate}
\end{proof}

\begin{proposition}
Let $m,n\in\N$ and assume that $(T_n)$ satisfies $Res((\lambda_n),n,m)$. Then
\begin{align}
d(T_mx_n,x_n) &\le \left(\left| 1-\frac{\lambda_m}{\lambda_n}\right|+1\right)d(T_n x_n,x_n), \label{res-lambda-m-n-main} \\
d(T_mx_n,x_n) &\le \frac{\lambda_m}{\lambda_n}d(T_n x_n,x_n) \quad \text{if } \lambda_m \ge \lambda_n, \label{res-lambda-m-geq-n} \\
d(T_mx_n,x_n) &\le \left(2-\frac{\lambda_m}{\lambda_n}\right)d(T_n x_n,x_n) \quad \text{if } \lambda_m < \lambda_n. \label{res-lambda-m-less-n}
\end{align}
\end{proposition}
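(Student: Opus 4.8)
The plan is to prove all three estimates at once: the first inequality is the general bound, and the second and third are simply what it specializes to after resolving the absolute value according to the sign of $\lambda_m - \lambda_n$.

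First I would instantiate the hypothesis $Res((\lambda_n),n,m)$ from \eqref{def-Res-lambda-n-m} at the point $y := x_n \in C$, which gives
\[
d(T_nx_n, T_mx_n) \le \left|1-\frac{\lambda_m}{\lambda_n}\right| d(x_n, T_nx_n).
\]
Then a single application of the triangle inequality, $d(T_mx_n,x_n) \le d(T_mx_n,T_nx_n) + d(T_nx_n,x_n)$, combined with the displayed bound yields exactly \eqref{res-lambda-m-n-main}.

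Next I would split into the two cases. If $\lambda_m \ge \lambda_n$, then $\frac{\lambda_m}{\lambda_n}\ge 1$, so $\left|1-\frac{\lambda_m}{\lambda_n}\right| = \frac{\lambda_m}{\lambda_n}-1$ and hence $\left|1-\frac{\lambda_m}{\lambda_n}\right|+1 = \frac{\lambda_m}{\lambda_n}$, which turns \eqref{res-lambda-m-n-main} into \eqref{res-lambda-m-geq-n}. If $\lambda_m < \lambda_n$, then $\frac{\lambda_m}{\lambda_n}<1$, so $\left|1-\frac{\lambda_m}{\lambda_n}\right| = 1-\frac{\lambda_m}{\lambda_n}$ and $\left|1-\frac{\lambda_m}{\lambda_n}\right|+1 = 2-\frac{\lambda_m}{\lambda_n}$, giving \eqref{res-lambda-m-less-n}.

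There is no real obstacle here: the whole argument is one use of the resolvent-like condition, one triangle inequality, and an elementary case distinction on the sign of $\lambda_m-\lambda_n$. The only point worth stating carefully is that $(\lambda_n)$ is a sequence of \emph{positive} reals, so the quotient $\frac{\lambda_m}{\lambda_n}$ is well defined and the case split is exhaustive.
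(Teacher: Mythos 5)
Your proposal is correct and follows exactly the paper's argument: apply the condition $Res((\lambda_n),n,m)$ at $y=x_n$, use one triangle inequality to obtain \eqref{res-lambda-m-n-main}, and resolve the absolute value by the sign of $\lambda_m-\lambda_n$ to get \eqref{res-lambda-m-geq-n} and \eqref{res-lambda-m-less-n} (which the paper only states ``follow immediately''). No gaps.
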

\begin{proof}
\eqref{res-lambda-m-n-main} holds, as 
\begin{align*}
d(T_mx_n,x_n) &\le d(T_mx_n,T_nx_n)+d(T_nx_n,x_n) \stackrel{\eqref{def-Res-lambda-n-m}}{\le}  \left| 1-\frac{\lambda_m}{\lambda_n} \right|d(T_n x_n,x_n) + d(T_nx_n,x_n) \\
&=\left(\left| 1-\frac{\lambda_m}{\lambda_n}\right|+1\right)d(T_n x_n,x_n). 
\end{align*}
\eqref{res-lambda-m-geq-n} and \eqref{res-lambda-m-less-n} follow immediately.
\end{proof}

\subsection{Quantitative hypotheses on the parameter sequences}

The following quantitative hypotheses on the parameter 
sequences $(\alpha_n) \subseteq [0,1]$, $(\lambda_n) \subseteq (0,\infty)$ will be used to compute, for the genVAM iteration, 
rates of asymptotic regularity in Section \ref{section-rates-as-reg} and rates of metastability in Section \ref{rates-meta-thms}. 

\begin{align*}
\mathrm{(H1}\alpha_n\mathrm{)} & \quad \sum\limits_{n=0}^{\infty} \alpha_n =\infty \text{~with rate of divergence~} \sigma_1;\\[1mm]
\mathrm{(H2}\alpha_n\mathrm{)} & \quad \sum\limits_{n=0}^{\infty} |\alpha_n-\alpha_{n+1}| <\infty \text{~with Cauchy modulus~} \sigma_2;\\[1mm]
\mathrm{(H3}\alpha_n\mathrm{)} & \quad \lim\limits_{n\to\infty}\alpha_n=0 \text{~with rate of convergence~} \sigma_3;\\[1mm]
\mathrm{(H1}\lambda_n\mathrm{)} & \quad\sum\limits_{n=0}^{\infty} \left|1-\frac{\lambda_{n+1}}{\lambda_n}\right|<\infty \text{~with Cauchy modulus~} \theta_1;\\[1mm]
\mathrm{(H1}^*\lambda_n\mathrm{)} & \quad\sum\limits_{n=0}^{\infty} \left|1-\frac{\lambda_n}{\lambda_{n+1}}\right|<\infty \text{~with Cauchy modulus~} \theta_1^*;\\[1mm]
\mathrm{(H2}\lambda_n\mathrm{)} & \quad \Lambda\in\N^* \text{ and } N_{\Lambda}\in\N \text{~are such that~} \lambda_n \geq 
\frac1\Lambda \text{~for all~} n\geq N_{\Lambda}; \\[1mm]
\mathrm{(H3}\lambda_n\mathrm{)} & \quad\sum\limits_{n=0}^{\infty} |\lambda_n-\lambda_{n+1}| <\infty \text{~with Cauchy modulus~} \theta_2; \\[1mm]
\mathrm{(H4}\lambda_n \mathrm{)} & \quad\lim\limits_{n\to\infty} \lambda_n=\lambda >0 \text{~with rate of convergence~} \theta_4.
\end{align*}
Furthermore, for $(\alpha_n) \subseteq (0,1]$, we shall also consider 
\begin{align*}
\mathrm{(H4}\alpha_n\mathrm{)} & \quad \lim\limits_{n\to\infty} \frac{|\alpha_{n+1}-\alpha_n|}{\alpha_n^2} =0 \text{~with rate of convergence~} \sigma_4;\\[1mm]
\end{align*}

\begin{lemma}\label{H1alphan-prop}
Assume that (H1$\alpha_n$) holds. Then
\begin{enumerate}
\item\label{H1alphan-prop-sigma1} $\sigma_1(n)\geq n-1$ for all $n\in \N$.
\item\label{H1alphan-prop-1} $\sum\limits_{n=0}^\infty (1-\alpha)\alpha_n$ diverges with rate 
$\theta(n)=\sigma_1\left(\left\lceil\frac{n}{1-\alpha}\right\rceil\right)$.
\item\label{H1alphan-prop-2} $\sum\limits_{n=0}^\infty (1-\alpha)\alpha_{n+1}$ diverges with rate 
$\theta^*(n)=\sigma_1^+\left(\left\lceil\frac{n}{1-\alpha}\right\rceil+1\right)\remin 1$.
\end{enumerate}
\end{lemma}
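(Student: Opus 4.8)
The plan is to obtain all three assertions as instances of Lemma~\ref{rate-divergence-an-prop} applied to the divergent series $\sum_{n=0}^\infty\alpha_n$, whose rate of divergence is $\sigma_1$ by hypothesis (H1$\alpha_n$). Two standing facts will be used throughout: $(\alpha_n)\subseteq[0,1]$ (as stipulated in this section) and $1-\alpha\in(0,1]\subseteq(0,\infty)$, since $\alpha\in[0,1)$.

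For \eqref{H1alphan-prop-sigma1}, I would simply invoke Lemma~\ref{rate-divergence-an-prop}.\eqref{rate-divergence-an-01}, whose hypothesis $(a_n)\subseteq[0,1]$ is met by $(\alpha_n)$; this gives $\sigma_1(n)\geq n-1$ for all $n\in\N$. For \eqref{H1alphan-prop-1}, I would apply the scaling clause Lemma~\ref{rate-divergence-an-prop}.\eqref{rate-divergence-cbn-np1-cbn} with $a_n:=\alpha_n$, rate of divergence $\sigma_1$, and constant $c:=1-\alpha$, which yields directly that $\sum_{n=0}^\infty(1-\alpha)\alpha_n$ diverges with rate of divergence $n\mapsto\sigma_1\bigl(\lceil n/(1-\alpha)\rceil\bigr)=\theta(n)$.

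Item \eqref{H1alphan-prop-2} is the composition of two of the modifications in Lemma~\ref{rate-divergence-an-prop}, and the only point deserving attention is the order in which they are applied. I would first apply the index-shift clause Lemma~\ref{rate-divergence-an-prop}.\eqref{rate-divergence-an-N} with $N:=1$ to $\sum_{n=0}^\infty\alpha_n$; since $(\alpha_n)\subseteq[0,1]$, the $[0,1]$-case is available and gives that $\sum_{n=0}^\infty\alpha_{n+1}$ diverges with rate of divergence $\rho(n):=\sigma_1^+(n+1)\remin 1$. Then I would apply the scaling clause Lemma~\ref{rate-divergence-an-prop}.\eqref{rate-divergence-cbn-np1-cbn} to this new series with $a_n:=\alpha_{n+1}$, rate $\rho$, and $c:=1-\alpha$, obtaining that $\sum_{n=0}^\infty(1-\alpha)\alpha_{n+1}$ diverges with rate of divergence $n\mapsto\rho\bigl(\lceil n/(1-\alpha)\rceil\bigr)=\sigma_1^+\bigl(\lceil n/(1-\alpha)\rceil+1\bigr)\remin 1=\theta^*(n)$. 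Reversing the order (scaling by $1-\alpha$ first, then shifting) would still be correct but would produce a different, generally larger, closed form such as $\sigma_1^+\bigl(\lceil(n+1)/(1-\alpha)\rceil\bigr)\remin 1$, so the stated shape of $\theta^*$ is obtained precisely by shifting first. There is no genuine obstacle here; the work is purely the bookkeeping of tracking which clause of Lemma~\ref{rate-divergence-an-prop} produces which transform of $\sigma_1$ and checking the $[0,1]$-side conditions, all of which hold since $(\alpha_n)\subseteq[0,1]$.
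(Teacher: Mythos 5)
Your proposal is correct and follows exactly the paper's own route: item (i) by Lemma \ref{rate-divergence-an-prop}.\eqref{rate-divergence-an-01}, item (ii) by the scaling clause with $c=1-\alpha$, and item (iii) by first applying the index-shift clause with $N=1$ (using $(\alpha_n)\subseteq[0,1]$) and then scaling, which is precisely the order the paper uses to obtain the stated form of $\theta^*$.
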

\begin{proof}
\begin{enumerate}
\item By Lemma \ref{rate-divergence-an-prop}.\eqref{rate-divergence-an-01}. 
\item  Apply Lemma \ref{rate-divergence-an-prop}.\eqref{rate-divergence-cbn-np1-cbn} with 
 $a_n:=\alpha_n$,  $\theta:=\sigma_1$ and $c:=1-\alpha$.
\item By Lemma \ref{rate-divergence-an-prop}.\eqref{rate-divergence-an-N} with $N:=1$ and the fact that $(\alpha_n) \subseteq [0,1]$, 
we get that $\sum\limits_{n=0}^\infty\alpha_{n+1}$ diverges with rate 
$\sigma_1^*(n)=\sigma_1^+(n+1)\remin 1$.  Apply now Lemma \ref{rate-divergence-an-prop}.\eqref{rate-divergence-cbn-np1-cbn} with 
 $a_n:=\alpha_{n+1}$, $\theta:=\sigma_1^*$ and $c:=1-\alpha$ to get (iii).
\end{enumerate}
\end{proof}

\begin{lemma}\label{H4lambdan-Caucy-mod}
Assume that (H4$\lambda_n$) holds. Then $\theta_4^*(k)=\theta_4(2k+1)$ is a Cauchy modulus of $(\lambda_n)$. 
\end{lemma}
\begin{proof}
By Lemma \ref{rate-conv-implies-Cauchy-modulus}.
\end{proof}

The following property is proved in \cite[Lemma 4.4]{FirLeu25}.

\begin{lemma}\label{H2lambda+H3lambda-implies-H1lambda-H1*lambda}
Assume that (H2$\lambda_n$) and (H3$\lambda_n$) hold. Then (H1$\lambda_n$), (H1$^*\lambda_n$) hold with 
\[
\theta_1(k)=\theta_1^*(k)=\max\{N_\Lambda,\theta_2(\Lambda (k+1)-1)\}.
\]
\end{lemma}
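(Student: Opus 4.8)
The plan is to prove Lemma \ref{H2lambda+H3lambda-implies-H1lambda-H1*lambda} by reducing the sums $\sum_n |1 - \lambda_{n+1}/\lambda_n|$ and $\sum_n |1 - \lambda_n/\lambda_{n+1}|$ to the already-controlled sum $\sum_n |\lambda_n - \lambda_{n+1}|$. The key observation is that once $n \geq N_\Lambda$ we have $\lambda_n \geq 1/\Lambda > 0$, so the denominator is bounded below and dividing by it costs only a factor $\Lambda$. Concretely, for $n \geq N_\Lambda$,
\[
\left|1 - \frac{\lambda_{n+1}}{\lambda_n}\right| = \frac{|\lambda_n - \lambda_{n+1}|}{\lambda_n} \leq \Lambda\,|\lambda_n - \lambda_{n+1}|,
\]
and symmetrically $\left|1 - \frac{\lambda_n}{\lambda_{n+1}}\right| \leq \Lambda\,|\lambda_n - \lambda_{n+1}|$ once $n+1 \geq N_\Lambda$, which is implied by $n \geq N_\Lambda$. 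So the tail of each target series, from index $N_\Lambda$ onwards, is dominated termwise by $\Lambda$ times the tail of $\sum_n |\lambda_n - \lambda_{n+1}|$.

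**Next I would** turn this termwise domination into a Cauchy modulus. Let $(\mu_n)$ denote the sequence of partial sums of $\sum_n |1 - \lambda_{n+1}/\lambda_n|$. For $k \in \N$ and $m \geq n \geq \max\{N_\Lambda, \theta_2(\Lambda(k+1)-1)\}$, we have
\[
\mu_m - \mu_n = \sum_{i=n+1}^{m} \left|1 - \frac{\lambda_{i}}{\lambda_{i-1}}\right| \leq \Lambda \sum_{i=n+1}^{m} |\lambda_{i-1} - \lambda_i| \leq \Lambda \cdot \frac{1}{\Lambda(k+1)} = \frac{1}{k+1},
\]
where the middle inequality uses the termwise bound (valid since $i - 1 \geq n \geq N_\Lambda$) and the last inequality uses that $n \geq \theta_2(\Lambda(k+1)-1)$ together with the defining property of the Cauchy modulus $\theta_2$ of $\sum_n|\lambda_n-\lambda_{n+1}|$ (applied with parameter $\Lambda(k+1)-1$, so the bound is $\frac{1}{(\Lambda(k+1)-1)+1} = \frac{1}{\Lambda(k+1)}$). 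Since $(\mu_n)$ is monotone nondecreasing, $|\mu_m - \mu_n| = \mu_m - \mu_n$ for $m \geq n$, so this shows $\theta_1(k) = \max\{N_\Lambda, \theta_2(\Lambda(k+1)-1)\}$ is a Cauchy modulus, and in particular $\sum_n |1 - \lambda_{n+1}/\lambda_n|$ converges, i.e.\ (H1$\lambda_n$) holds.

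**The argument for (H1$^*\lambda_n$) is identical**, using instead $\left|1 - \frac{\lambda_i}{\lambda_{i+1}}\right| \leq \Lambda |\lambda_i - \lambda_{i+1}|$ for $i \geq N_\Lambda - 1$; since the summation in that case starts at $i = n+1 \geq N_\Lambda + 1 \geq N_\Lambda$, the bound applies with room to spare, and we get the same modulus $\theta_1^*(k) = \max\{N_\Lambda, \theta_2(\Lambda(k+1)-1)\}$. There is no serious obstacle here; the only point requiring a little care is the bookkeeping on indices — making sure that for every summand $|1 - \lambda_i/\lambda_{i-1}|$ appearing in $\mu_m - \mu_n$ the relevant denominator index is $\geq N_\Lambda$, which is guaranteed precisely by the $\max$ with $N_\Lambda$ in the definition of the modulus — and the off-by-one in translating "Cauchy modulus at level $k$" into the correct argument of $\theta_2$. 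Since the statement says this is already proved in \cite[Lemma 4.4]{FirLeu25}, I would simply cite that reference rather than repeat the computation in full.
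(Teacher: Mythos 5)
Your argument is correct and is essentially the only argument available: the paper itself offers no proof, simply citing \cite[Lemma 4.4]{FirLeu25}, and your termwise bound $\left|1-\frac{\lambda_{n+1}}{\lambda_n}\right|\le \Lambda\,|\lambda_n-\lambda_{n+1}|$ for $n\ge N_\Lambda$ (and symmetrically for the starred version), combined with the rescaled Cauchy modulus $\theta_2(\Lambda(k+1)-1)$, gives exactly the stated $\theta_1=\theta_1^*$. The only blemish is the self-acknowledged indexing slip: with the paper's convention $\mu_n=\sum_{i=0}^{n}\left|1-\frac{\lambda_{i+1}}{\lambda_i}\right|$, the tail $\mu_{n+p}-\mu_n$ has summands $\left|1-\frac{\lambda_{i+1}}{\lambda_i}\right|$ for $i\in[n+1;n+p]$, and then both the requirement $i\ge N_\Lambda$ and the bound $\tilde{b}_{n+p}-\tilde{b}_n\le \frac{1}{\Lambda(k+1)}$ from $\theta_2$ apply directly, with no extra term to worry about.
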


\subsection{An auxiliary sequence $(y_n)$}\label{section-auxiliary-yn}

Assume, moreover, that $X$ is complete, $C$ is closed, $(\alpha_n) \subseteq (0,1]$ and $(T_n)$ satisfies $Res((\lambda_n),n,m)$ for all $m,n\in \N$. 
Suppose that  (H4$\lambda_n$) holds, that is, $\lim\limits_{n\to\infty} \lambda_n=\lambda >0$ with rate of convergence $\theta_4$,
and let $l\in\N^*$ be such that 
\begin{equation}\label{def-l-lambda}
\lambda > \frac{1}{l+1}. 
 \end{equation}
Then
 \begin{equation}\label{llambdan-ge-lambda-l}
\lambda_n \ge \lambda - \frac{1}{l+1}>0 \quad \text{for all }n\geq \theta_4(l). 
 \end{equation}
 
\begin{lemma}
For every $x\in C$, the sequence $(T_nx)_{n\in \N}$ is Cauchy with modulus $\gamma$ given by 
\begin{align*}
\gamma(k)  = \begin{cases} 
0 & \text{ if } x\in F,\\
\max\{\theta_4(l), \theta_4(2\lceil L_{x,z}(k+1)\rceil-1)\} & \text{ if } x\notin F,
\end{cases}
\end{align*}
where $L_{x,z}=\frac{2(l+1)d(x,z)}{\lambda(l+1)-1}$. 
\end{lemma}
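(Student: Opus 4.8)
The plan is to show that $(T_n x)$ satisfies the Cauchy criterion with the stated modulus by estimating $d(T_n x, T_m x)$ directly via the resolvent-like condition $Res((\lambda_n),n,m)$. If $x\in F$, then $T_n x = x$ for all $n$ (since $Fix(T_n)=F$ for all $n$, as recalled after the definition of $Res$), so $d(T_{n+p}x,T_n x)=0$ and any modulus works, in particular $\gamma(k)=0$. So the substance is the case $x\notin F$.

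For $x\notin F$, fix $k$ and let $n\geq \gamma(k)$, so in particular $n\geq\theta_4(l)$, which by \eqref{llambdan-ge-lambda-l} gives $\lambda_n\geq\lambda-\frac{1}{l+1}>0$. Applying $Res((\lambda_n),n,m)$ with $y:=x$ yields
\[
d(T_n x, T_m x) \leq \left|1-\frac{\lambda_m}{\lambda_n}\right| d(x,T_n x) = \frac{|\lambda_n-\lambda_m|}{\lambda_n}\, d(x,T_n x).
\]
Next I would bound the two factors on the right. For $d(x,T_n x)$, use Lemma \ref{xn-bound-as-reg-hyp}.\eqref{dx-Tnx-bound-hyp} (which only needs $x\in C$ and $z\in F$, not the specific iterate $x_n$): $d(x,T_n x)\leq 2d(x,z)$. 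For the denominator, $\lambda_n\geq \lambda-\frac{1}{l+1}=\frac{\lambda(l+1)-1}{l+1}$ when $n\geq\theta_4(l)$. Hence
\[
d(T_n x, T_m x)\leq \frac{l+1}{\lambda(l+1)-1}\cdot 2d(x,z)\cdot |\lambda_n-\lambda_m| = L_{x,z}\,|\lambda_n-\lambda_m|.
\]

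It remains to control $|\lambda_n-\lambda_m|$. Here I would invoke Lemma \ref{H4lambdan-Caucy-mod}: since (H4$\lambda_n$) holds, $\theta_4^*(j)=\theta_4(2j+1)$ is a Cauchy modulus of $(\lambda_n)$. Taking $j$ to be the smallest integer with $\frac{1}{j+1}\leq \frac{1}{L_{x,z}(k+1)}$, i.e. $j=\lceil L_{x,z}(k+1)\rceil-1$, we get that for all $n\geq\theta_4(2\lceil L_{x,z}(k+1)\rceil-1)$ and all $p$, $|\lambda_{n+p}-\lambda_n|\leq \frac{1}{L_{x,z}(k+1)}$. Thus for $n\geq\gamma(k)=\max\{\theta_4(l),\theta_4(2\lceil L_{x,z}(k+1)\rceil-1)\}$ we obtain $d(T_{n+p}x,T_n x)\leq L_{x,z}\cdot\frac{1}{L_{x,z}(k+1)}=\frac{1}{k+1}$, which is exactly the Cauchy-modulus condition. (One should note $L_{x,z}>0$ since $x\notin F$ forces $d(x,z)>0$, and $\lambda(l+1)-1>0$ by \eqref{def-l-lambda}, so the division and the ceiling are legitimate.)

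I do not expect a genuine obstacle here: the argument is a straightforward combination of $Res((\lambda_n),n,m)$, the crude bound $d(x,T_n x)\leq 2d(x,z)$, the lower bound on $\lambda_n$ for large $n$ coming from (H4$\lambda_n$), and the Cauchy modulus of $(\lambda_n)$ from Lemma \ref{H4lambdan-Caucy-mod}. The only points requiring a little care are bookkeeping with the ceilings and the factor $2k+1$ when converting a rate of convergence into a Cauchy modulus, and making sure the $n\geq\theta_4(l)$ clause is carried through so that the denominator $\lambda_n$ stays bounded away from $0$; both are handled by defining $\gamma(k)$ as the maximum of the two thresholds.
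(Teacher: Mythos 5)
Your proposal is correct and follows essentially the same route as the paper: the resolvent-like condition applied at $x$, the bound $d(x,T_nx)\le 2d(x,z)$, the lower bound $\lambda_n\ge\lambda-\frac{1}{l+1}$ for $n\ge\theta_4(l)$, and the Cauchy modulus $\theta_4(2k+1)$ of $(\lambda_n)$, combined exactly as in the paper's argument. The bookkeeping with the ceiling and the maximum of the two thresholds also matches.
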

\begin{proof}
The case $x\in F$ is obvious. Assume that $x\notin F$, so $d(x,z)>0$. Let $k,n\in \N$ with $n\geq\gamma(k)$. 
Then for all $p\in \N$, 
\begin{align*}
d(T_nx,T_{n+p}x) & \leq \left| 1-\frac{\lambda_{n+p}}{\lambda_n} \right|d(x, T_nx)
=\frac{|\lambda_n-\lambda_{n+p}|}{\lambda_n} d(x, T_nx)\quad \text{by \eqref{def-Res-lambda-n-m}} \\
& \leq  \frac{|\lambda_n-\lambda_{n+p}|}{\lambda_n}2d(x,z)\quad \text{by  Lemma \ref{xn-bound-as-reg-hyp}.\eqref{dx-Tnx-bound-hyp}}\\
& \leq  \frac{|\lambda_n-\lambda_{n+p}|}{\lambda - \frac{1}{l+1}}2d(x,z) \quad \text{by  \eqref{llambdan-ge-lambda-l}, as } n\geq \theta_4(l)\\
& = L_{x,z} |\lambda_n-\lambda_{n+p}|.
\end{align*}
Apply Lemma \ref{H4lambdan-Caucy-mod} to obtain that $\theta_4^*(k)=\theta_4(2k+1)$ is a Cauchy modulus of $(\lambda_n)$. 
Since $n\geq \theta_4(2\lceil L_{x,z}(k+1)\rceil-1)=\theta_4^*(\lceil L_{x,z}(k+1)\rceil-1)$, we  get that 
$|\lambda_n-\lambda_{n+p}| \leq \frac1{\lceil L_{x,z}(k+1)\rceil}$,
hence $L_{x,z}|\lambda_n-\lambda_{n+p}| \leq \frac1{k+1}$.
\end{proof}

As $C$ is complete, we get that $(T_nx)$ is convergent. Let us define
\begin{equation}
\tilde{T}: C \to C, \qquad \tilde{T}x=\lim\limits_{n\to\infty} T_nx.
\end{equation}
One can easily see that $\tilde{T}$ is nonexpansive and that  $F \subseteq Fix(\tilde{T})$.

\begin{proposition} 
For all $x\in C$ and all $k\in\N$,
\begin{align}
d(T_kx,\tilde{T}x) & \le \frac{|\lambda_k-\lambda|}{\lambda}d(x,\tilde{T}x) 
\le (l+1)|\lambda_k-\lambda|d(x,\tilde{T}x),\label{Tk-tT-ineq-1}  \\
d(T_kx,\tilde{T}x)  & \le \frac{|\lambda_k-\lambda|}{\lambda_k}d(x, T_kx). \label{Tk-tT-ineq-2}
\end{align}
\end{proposition}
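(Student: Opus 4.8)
The plan is to derive both lines of the statement by instantiating the resolvent-like conditions satisfied by $(T_n)$ at the point $y:=x$ and then letting the free index tend to infinity, using the definition $\tilde{T}x=\lim_{n\to\infty}T_nx$, the hypothesis (H4$\lambda_n$), and the continuity of the metric.

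Concretely, I would fix $x\in C$ and $k\in\N$, and for each $m\in\N$ invoke the two instances of the resolvent-like condition that are available since $(T_n)$ satisfies $Res((\lambda_n),n,m)$ for all $m,n\in\N$. Applying $Res((\lambda_n),k,m)$ with $y:=x$ gives
\[
d(T_kx,T_mx)\le\left|1-\frac{\lambda_m}{\lambda_k}\right|d(x,T_kx)=\frac{|\lambda_k-\lambda_m|}{\lambda_k}\,d(x,T_kx),
\]
and applying $Res((\lambda_n),m,k)$ with $y:=x$ gives
\[
d(T_mx,T_kx)\le\left|1-\frac{\lambda_k}{\lambda_m}\right|d(x,T_mx)=\frac{|\lambda_m-\lambda_k|}{\lambda_m}\,d(x,T_mx).
\]
Now let $m\to\infty$. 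By the definition of $\tilde{T}$ we have $T_mx\to\tilde{T}x$, hence $d(T_kx,T_mx)\to d(T_kx,\tilde{T}x)$ and $d(x,T_mx)\to d(x,\tilde{T}x)$ (by the triangle inequality); by (H4$\lambda_n$), $\lambda_m\to\lambda$, so $\frac{|\lambda_k-\lambda_m|}{\lambda_k}\to\frac{|\lambda_k-\lambda|}{\lambda_k}$, and since $\lambda>0$ also $\frac{|\lambda_m-\lambda_k|}{\lambda_m}\to\frac{|\lambda-\lambda_k|}{\lambda}$. Passing to the limit in the two displayed inequalities yields \eqref{Tk-tT-ineq-2} and the first inequality in \eqref{Tk-tT-ineq-1}.

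The second inequality in \eqref{Tk-tT-ineq-1} is then immediate: from \eqref{def-l-lambda} we have $\lambda>\frac{1}{l+1}$, hence $\frac{1}{\lambda}<l+1$, so $\frac{|\lambda_k-\lambda|}{\lambda}\,d(x,\tilde{T}x)\le(l+1)|\lambda_k-\lambda|\,d(x,\tilde{T}x)$.

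There is no genuine obstacle in this argument — the whole proof is a limit passage in the resolvent-like inequalities — and the only point deserving attention is that the denominators cause no trouble: $\lambda_k$ is a fixed positive real, and since $\lambda_m\to\lambda>0$ the quotient $\frac{|\lambda_m-\lambda_k|}{\lambda_m}$ converges to $\frac{|\lambda-\lambda_k|}{\lambda}$. (Equivalently, one may first restrict to $m\ge\theta_4(l)$, where by \eqref{llambdan-ge-lambda-l} the $\lambda_m$ are bounded below by $\lambda-\frac{1}{l+1}>0$, and then take the limit along that tail.)
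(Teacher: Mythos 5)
Your proof is correct and follows essentially the same route as the paper: instantiate the resolvent-like condition $Res((\lambda_n),k,m)$ and $Res((\lambda_n),m,k)$ at $y:=x$ and pass to the limit $m\to\infty$, using $\tilde{T}x=\lim_{m\to\infty}T_mx$, $\lambda_m\to\lambda>0$ and continuity of the metric, with the second inequality in \eqref{Tk-tT-ineq-1} coming from \eqref{def-l-lambda}. The paper's proof is the same limit passage, just written with the free index $n$ instead of $m$ and without your (harmless) extra remark about the denominators.
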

\begin{proof}
Let $x\in C$ and  $k\in\N$. Apply \eqref{def-Res-lambda-n-m} to get that for all $n\in \N$,
\begin{align*} 
d(T_k x, T_n x) \leq \left| 1-\frac{\lambda_k}{\lambda_n} \right|d(x, T_n x) \quad \text{and}  \quad d(T_k x, T_n x) \leq \left| 1-\frac{\lambda_n}{\lambda_k} \right|d(x, T_k x). 
\end{align*}
It follows that 
\begin{align*} 
d(T_kx,\tilde{T}x) & =d\left(T_kx,\lim\limits_{n\to\infty} T_nx \right)= \lim\limits_{n\to\infty} d(T_kx,T_nx) 
\le \lim\limits_{n\to\infty} \left| 1-\frac{\lambda_k}{\lambda_n} \right|d(x, T_n x)\\ 
& =\left| 1-\frac{\lambda_k}{\lambda} \right|d(x, \tilde{T}x)
 = \frac{|\lambda_k-\lambda|}{\lambda}d(x,\tilde{T}x) \stackrel{\eqref{def-l-lambda}}{\le} (l+1)|\lambda_k-\lambda|d(x,\tilde{T}x) 
\end{align*} 
and 
\begin{align*} 
d(T_kx,\tilde{T}x) & = d\left(T_kx,\lim\limits_{n\to\infty} T_nx \right) =\lim\limits_{n\to\infty} d(T_kx,T_nx) 
\le \lim\limits_{n\to\infty}\left| 1-\frac{\lambda_n}{\lambda_k} \right|d(x, T_k x) \\
&=\left| 1-\frac{\lambda}{\lambda_k} \right|d(x, T_kx)=\frac{|\lambda_k-\lambda|}{\lambda_k}d(x, T_kx).
\end{align*} 
\end{proof}

\begin{proposition} \label{F=FixtT}
 $F = Fix(\tilde{T})$.
\end{proposition}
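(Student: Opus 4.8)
The plan is to prove the two inclusions separately. The inclusion $F \subseteq Fix(\tilde{T})$ has already been recorded right after the definition of $\tilde{T}$: if $z \in F$, then $T_n z = z$ for all $n\in\N$, so $\tilde{T}z = \lim_{n\to\infty} T_n z = z$. Hence only the reverse inclusion $Fix(\tilde{T}) \subseteq F$ needs an argument.

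For this, I would take an arbitrary $x \in Fix(\tilde{T})$, so that $d(x, \tilde{T}x) = 0$, and then apply the second estimate in \eqref{Tk-tT-ineq-1}, namely $d(T_k x, \tilde{T}x) \le (l+1)\,|\lambda_k - \lambda|\, d(x, \tilde{T}x)$, which holds for every $k \in \N$. Since the right-hand side vanishes, we obtain $d(T_k x, \tilde{T}x) = 0$, i.e. $T_k x = \tilde{T}x = x$ for all $k \in \N$. Therefore $x \in \bigcap_{k\in\N} Fix(T_k) = F$, and combining the two inclusions gives $F = Fix(\tilde{T})$.

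There is no real obstacle here: the entire content is already packaged in \eqref{Tk-tT-ineq-1}, which bounds the distance from $T_k x$ to $\tilde{T}x$ by a multiple of $d(x, \tilde{T}x)$, so a fixed point of $\tilde{T}$ automatically kills this bound and is forced to be a common fixed point of all the $T_k$. One could instead use \eqref{Tk-tT-ineq-2}, but that only yields $d(T_k x, x) \le \frac{|\lambda_k - \lambda|}{\lambda_k} d(x, T_k x)$, which would additionally require $|\lambda_k - \lambda| < \lambda_k$ in order to conclude; so the first inequality is the cleaner route.
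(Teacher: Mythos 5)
Your proof is correct and follows exactly the route the paper intends: the paper's proof is just the remark ``by the definition of $\tilde{T}$ and \eqref{Tk-tT-ineq-1}'', which unpacks to the same two inclusions you spell out. Nothing further is needed.
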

\begin{proof}
Obviously, by the definition of $\tilde{T}$ and \eqref{Tk-tT-ineq-1}. 
\end{proof}

Define, for every $n\in \N$, 
\begin{equation}\label{def-Sn}
S_n: C \to C, \quad S_n(x)=\alpha_nf(x) +(1-\alpha_n)\tilde{T}x. 
\end{equation}
One can easily see  that $S_n$ is a $\delta_n$-contraction, where $\delta_n=\alpha_n \alpha + 1-\alpha_n \in [0,1)$, hence we can apply 
the Banach contraction principle to get that  $S_n$ has a unique fixed point $y_n$. 
Thus, for all $n\in \N$, 
\begin{equation}\label{eq-def-yn}
y_n=\alpha_nf(y_n) +(1-\alpha_n)\tilde{T}y_n.
\end{equation}
Furthermore, by \eqref{W10-W-hyp}, 
\begin{equation}
d(y_n,f(y_n))= (1-\alpha_n)d(f(y_n),\tilde{T}y_n) \quad \text{ and } \quad d(y_n,\tilde{T}y_n)= \alpha_nd(f(y_n),\tilde{T}y_n). \label{dyn-fyn-tildeTyn}
\end{equation}

\begin{lemma}\label{yn-bounds-hyp}
For all $n\in \N$,
\begin{align} 
d(y_n,z) & \le \frac{1}{1-\alpha}d(f(z),z)\le K_z, \label{yn-bounds-hyp-1}\\
d(\tilde{T}y_n,f(y_n)),d(y_n,f(y_n)),d(y_n,\tilde{T}y_n) & \le \frac{2}{1-\alpha}d(f(z),z) \le 2K_z. \label{yn-bounds-hyp-2-more}
\end{align} 
\end{lemma}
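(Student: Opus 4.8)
The plan is to imitate, in a fixed-point-equation form, the induction step used for Lemma~\ref{xn-bound-as-reg-hyp}.\eqref{xn-bound-hyp}, working directly from the defining identity \eqref{eq-def-yn} for $y_n$ and from the auxiliary identities \eqref{dyn-fyn-tildeTyn}.

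First I would prove \eqref{yn-bounds-hyp-1}. Starting from $y_n=\alpha_nf(y_n)+(1-\alpha_n)\tilde{T}y_n$, apply (W1) to get $d(y_n,z)\le \alpha_nd(f(y_n),z)+(1-\alpha_n)d(\tilde{T}y_n,z)$. Next, bound $d(f(y_n),z)\le d(f(y_n),f(z))+d(f(z),z)\le \alpha d(y_n,z)+d(f(z),z)$, since $f$ is an $\alpha$-contraction, and $d(\tilde{T}y_n,z)=d(\tilde{T}y_n,\tilde{T}z)\le d(y_n,z)$, since $\tilde{T}$ is nonexpansive and $z\in F=Fix(\tilde{T})$ by Proposition~\ref{F=FixtT}. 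Substituting gives $d(y_n,z)\le (1-(1-\alpha)\alpha_n)d(y_n,z)+\alpha_nd(f(z),z)$, hence $(1-\alpha)\alpha_n d(y_n,z)\le \alpha_n d(f(z),z)$. Since in this subsection $(\alpha_n)\subseteq(0,1]$, we may divide by $\alpha_n>0$ to obtain $d(y_n,z)\le \frac{1}{1-\alpha}d(f(z),z)$, and the second inequality of \eqref{yn-bounds-hyp-1} is immediate from \eqref{def-Kz-main}.

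For \eqref{yn-bounds-hyp-2-more} I would use \eqref{dyn-fyn-tildeTyn}: since $\alpha_n,1-\alpha_n\in[0,1]$, both $d(y_n,f(y_n))=(1-\alpha_n)d(f(y_n),\tilde{T}y_n)$ and $d(y_n,\tilde{T}y_n)=\alpha_nd(f(y_n),\tilde{T}y_n)$ are bounded by $d(\tilde{T}y_n,f(y_n))$, so it suffices to estimate this single quantity. By the triangle inequality through $z$, $d(f(y_n),\tilde{T}y_n)\le d(f(y_n),z)+d(z,\tilde{T}y_n)$. Reusing $d(f(y_n),z)\le \alpha d(y_n,z)+d(f(z),z)$ together with the bound on $d(y_n,z)$ just proved yields $d(f(y_n),z)\le \frac{\alpha}{1-\alpha}d(f(z),z)+d(f(z),z)=\frac{1}{1-\alpha}d(f(z),z)$, and likewise $d(z,\tilde{T}y_n)\le d(y_n,z)\le \frac{1}{1-\alpha}d(f(z),z)$. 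Hence $d(\tilde{T}y_n,f(y_n))\le \frac{2}{1-\alpha}d(f(z),z)\le 2K_z$, which dominates all three distances.

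I do not expect any genuine obstacle: the argument is a direct analogue of the proof of Lemma~\ref{xn-bound-as-reg-hyp}, with the fixed-point equation \eqref{eq-def-yn} playing the role that the recursion played there. The only point that must be flagged is the cancellation of $\alpha_n$, which is legitimate precisely because of the running assumption $(\alpha_n)\subseteq(0,1]$ imposed at the start of Subsection~\ref{section-auxiliary-yn}.
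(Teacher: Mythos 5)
Your proposal is correct and follows essentially the same route as the paper: the fixed-point equation \eqref{eq-def-yn} plus (W1), the $\alpha$-contraction and nonexpansivity bounds yielding $(1-\alpha)\alpha_n d(y_n,z)\le \alpha_n d(f(z),z)$ (with cancellation of $\alpha_n>0$), and then a triangle-inequality estimate of $d(\tilde{T}y_n,f(y_n))$ through $z$ and $f(z)$ combined with \eqref{dyn-fyn-tildeTyn}. The only difference is a cosmetic regrouping of the triangle inequality in the second part, which gives the identical bound $\frac{2}{1-\alpha}d(f(z),z)$.
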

\begin{proof}
We have that  
\begin{align*} 
d(y_n,z) & \stackrel{\eqref{eq-def-yn}}{=} d(\alpha_nf(y_n) +(1-\alpha_n)\tilde{T}y_n,z) 
\stackrel{(W1)}{\le}  \alpha_nd(f(y_n),z)+(1-\alpha_n)d(\tilde{T}y_n,z) \\
& \le  \alpha_nd(f(y_n),z)+(1-\alpha_n)d(y_n,z) \quad \text{as } \tilde{T}z=z \text{ and }\tilde{T} \text{ is nonexpansive}\\
& \le  \alpha_nd(f(y_n),f(z))+\alpha_nd(f(z),z)+(1-\alpha_n)d(y_n,z)\\
& \le \alpha_n\alpha d(y_n,z)+\alpha_nd(f(z),z)+(1-\alpha_n)d(y_n,z) \\
& =  (1-(1-\alpha)\alpha_n)d(y_n,z)+\alpha_nd(f(z),z).
\end{align*} 
Thus, $(1-\alpha)\alpha_n d(y_n,z) \le \alpha_nd(f(z),z)$, hence the first inequality in \eqref{yn-bounds-hyp-1} holds. Apply \eqref{def-Kz-main}
to get the second inequality. 

Furthermore, 
\begin{align*} 
d(\tilde{T}y_n,f(y_n)) &\le d(\tilde{T}y_n,z)+d(z,f(z))+d(f(z),f(y_n))
\le  (1+\alpha)d(y_n,z)+d(z,f(z))\\
&\stackrel{\eqref{yn-bounds-hyp-1}}{\le}\frac{1+\alpha}{1-\alpha}d(f(z),z)+d(f(z),z) = \frac{2}{1-\alpha}d(f(z),z).
\end{align*}
By \eqref{dyn-fyn-tildeTyn}, we have that $d(y_n,f(y_n)), d(y_n,\tilde{T}y_n)\leq d(\tilde{T}y_n,f(y_n))$. Thus,
\eqref{yn-bounds-hyp-2-more} holds.
\end{proof}

\begin{lemma}
For all $n\in \N$,
\begin{align} 
d(y_n,y_{n+1}) &\le \frac{|\alpha_{n+1}-\alpha_n|}{(1-\alpha)\alpha_n}d(f(y_{n+1}),\tilde{T}y_{n+1}). \label{desig-yn+1-yn}
\end{align} 
\end{lemma}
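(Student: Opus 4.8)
The plan is to estimate $d(y_n,y_{n+1})$ directly from the fixed-point equations \eqref{eq-def-yn} for $y_n$ and $y_{n+1}$, using the convexity inequality \eqref{W42-xyzw-ineq1-2} together with the contraction/nonexpansiveness of $f$ and $\tilde{T}$, and then solving the resulting linear inequality for $d(y_n,y_{n+1})$.

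First I would write $y_n = \alpha_n f(y_n) + (1-\alpha_n)\tilde{T}y_n$ and $y_{n+1} = \alpha_{n+1} f(y_{n+1}) + (1-\alpha_{n+1})\tilde{T}y_{n+1}$, and apply \eqref{W42-xyzw-ineq1-2} with $\lambda := \alpha_n$, $\widetilde{\lambda} := \alpha_{n+1}$, $x := f(y_n)$, $z := \tilde{T}y_n$, $y := f(y_{n+1})$, $w := \tilde{T}y_{n+1}$. This gives
\[
d(y_n,y_{n+1}) \le \alpha_n d(f(y_n),f(y_{n+1})) + (1-\alpha_n) d(\tilde{T}y_n,\tilde{T}y_{n+1}) + |\alpha_n-\alpha_{n+1}|\, d(f(y_{n+1}),\tilde{T}y_{n+1}).
\]
Next, since $f$ is an $\alpha$-contraction and $\tilde{T}$ is nonexpansive, I would bound $d(f(y_n),f(y_{n+1})) \le \alpha\, d(y_n,y_{n+1})$ and $d(\tilde{T}y_n,\tilde{T}y_{n+1}) \le d(y_n,y_{n+1})$, obtaining
\[
d(y_n,y_{n+1}) \le (\alpha\alpha_n + 1 - \alpha_n)\, d(y_n,y_{n+1}) + |\alpha_n-\alpha_{n+1}|\, d(f(y_{n+1}),\tilde{T}y_{n+1}).
\]

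Finally, rearranging yields $(1-\alpha)\alpha_n\, d(y_n,y_{n+1}) \le |\alpha_{n+1}-\alpha_n|\, d(f(y_{n+1}),\tilde{T}y_{n+1})$, and since $(\alpha_n)\subseteq(0,1]$ and $\alpha\in[0,1)$ we have $(1-\alpha)\alpha_n > 0$, so dividing through gives exactly \eqref{desig-yn+1-yn}. There is essentially no obstacle here; the only point requiring a little care is picking the correct instantiation of \eqref{W42-xyzw-ineq1-2} (with the parameters arranged so that the $|\lambda-\widetilde{\lambda}|$-term lands on $d(f(y_{n+1}),\tilde{T}y_{n+1})$ rather than on $d(f(y_n),\tilde{T}y_n)$), and noting that $\tilde{T}z = z$ is not needed — only nonexpansiveness of $\tilde{T}$ and the contraction constant of $f$ enter.
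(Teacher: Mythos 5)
Your proposal is correct and coincides with the paper's own proof: the same instantiation of \eqref{W42-xyzw-ineq1-2} applied to the fixed-point equations \eqref{eq-def-yn}, followed by the $\alpha$-contraction bound for $f$, nonexpansiveness of $\tilde{T}$, and rearranging the resulting linear inequality. Nothing further is needed.
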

\begin{proof}
We get that 
\begin{align*} 
d(y_n,y_{n+1})  & \stackrel{\eqref{eq-def-yn}}{=}  d(\alpha_nf(y_n) +(1-\alpha_n)\tilde{T}y_n,\alpha_{n+1}f(y_{n+1})
+(1-\alpha_{n+1})\tilde{T}y_{n+1}) \\
& \stackrel{\eqref{W42-xyzw-ineq1-2}}{\le}  \alpha_n d(f(y_n),f(y_{n+1})) + (1-\alpha_n)d(\tilde{T}y_n,\tilde{T}y_{n+1}) 
+ |\alpha_{n+1}-\alpha_n|d(f(y_{n+1}),\tilde{T}y_{n+1})\\
&\le \alpha\alpha_n d(y_n,y_{n+1})+(1-\alpha_n)d(y_n,y_{n+1})+|\alpha_{n+1}-\alpha_n|d(f(y_{n+1}),\tilde{T}y_{n+1}) \\
&= (1-(1-\alpha)\alpha_n)d(y_{n+1},y_n)+|\alpha_{n+1}-\alpha_n|d(f(y_{n+1}),\tilde{T}y_{n+1}).
\end{align*}
Thus, $(1-\alpha)\alpha_nd(y_n,y_{n+1}) \le |\alpha_{n+1}-\alpha_n|d(f(y_{n+1}),\tilde{T}y_{n+1})$, 
which gives \eqref{desig-yn+1-yn}.
\end{proof}

\begin{lemma}
For all $n\in \N$,
\begin{align} 
d(x_{n+1},y_n) &\le (1-(1-\alpha)\alpha_n) d(x_n,y_n) + (1-\alpha_n)\alpha_n\frac{|\lambda_n-\lambda|}{\lambda}d(f(y_n),\tilde{T}y_n).
\label{desig-xn+1-yn}
\end{align} 
\end{lemma}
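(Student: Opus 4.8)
The plan is to expand both $x_{n+1}$ and $y_n$ according to their defining recursions \eqref{def-genVAM} and \eqref{eq-def-yn}, noting that they are $W$-combinations with the \emph{same} parameter $\alpha_n$: namely $x_{n+1}=(1-\alpha_n)T_nx_n+\alpha_nf(x_n)$ and $y_n=(1-\alpha_n)\tilde{T}y_n+\alpha_nf(y_n)$. Applying (W4) to these two combinations (equivalently, \eqref{W42-xyzw-ineq1-2} with $\widetilde\lambda:=\lambda:=\alpha_n$, so that the $|\lambda-\widetilde\lambda|$-term vanishes), I would obtain
\[
d(x_{n+1},y_n)\le (1-\alpha_n)\,d(T_nx_n,\tilde{T}y_n)+\alpha_n\,d(f(x_n),f(y_n)).
\]

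Next I would estimate the two terms on the right separately. Since $f$ is an $\alpha$-contraction, $d(f(x_n),f(y_n))\le \alpha\, d(x_n,y_n)$. For the first term, the triangle inequality together with the nonexpansiveness of $T_n$ gives $d(T_nx_n,\tilde{T}y_n)\le d(T_nx_n,T_ny_n)+d(T_ny_n,\tilde{T}y_n)\le d(x_n,y_n)+d(T_ny_n,\tilde{T}y_n)$. The key step is then to control $d(T_ny_n,\tilde{T}y_n)$: applying \eqref{Tk-tT-ineq-1} at the point $x:=y_n$ with $k:=n$ yields $d(T_ny_n,\tilde{T}y_n)\le \frac{|\lambda_n-\lambda|}{\lambda}\,d(y_n,\tilde{T}y_n)$, and then \eqref{dyn-fyn-tildeTyn} replaces $d(y_n,\tilde{T}y_n)$ by $\alpha_n\,d(f(y_n),\tilde{T}y_n)$, so that
\[
d(T_ny_n,\tilde{T}y_n)\le \frac{|\lambda_n-\lambda|}{\lambda}\,\alpha_n\,d(f(y_n),\tilde{T}y_n).
\]

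Finally I would substitute these bounds back, obtaining
\[
d(x_{n+1},y_n)\le (1-\alpha_n)\Bigl(d(x_n,y_n)+\frac{|\lambda_n-\lambda|}{\lambda}\alpha_n\,d(f(y_n),\tilde{T}y_n)\Bigr)+\alpha_n\alpha\, d(x_n,y_n),
\]
and collecting the $d(x_n,y_n)$ coefficients as $(1-\alpha_n)+\alpha_n\alpha=1-(1-\alpha)\alpha_n$ gives exactly \eqref{desig-xn+1-yn}. There is no real obstacle here — the computation is routine once one chooses to split via (W4) at the common parameter $\alpha_n$; the only point requiring a little care is to invoke \eqref{Tk-tT-ineq-1} (rather than \eqref{Tk-tT-ineq-2}) evaluated at $y_n$ and to use that $y_n$ is the fixed point of $S_n$ via \eqref{eq-def-yn} and \eqref{dyn-fyn-tildeTyn}, so that the resulting term has precisely the shape $\frac{|\lambda_n-\lambda|}{\lambda}d(f(y_n),\tilde{T}y_n)$ appearing in the statement.
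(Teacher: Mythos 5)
Your proposal is correct and follows essentially the same argument as the paper: apply (W4) to the two $W$-combinations with common parameter $\alpha_n$, use the $\alpha$-contractivity of $f$ and nonexpansiveness of $T_n$, and control $d(T_ny_n,\tilde{T}y_n)$ via \eqref{Tk-tT-ineq-1} at $y_n$ followed by \eqref{dyn-fyn-tildeTyn}. The only cosmetic difference is that you justify the splitting through \eqref{W42-xyzw-ineq1-2} with equal parameters rather than citing (W4) directly, which amounts to the same estimate.
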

\begin{proof}
We have that 
\begin{align*} 
d(x_{n+1},y_n) & = d(\alpha_nf(x_n)+(1-\alpha_n)T_nx_n, \alpha_nf(y_n)+(1-\alpha_n)\tilde{T}y_n) \quad \text{by \eqref{def-genVAM} and \eqref{eq-def-yn}}\\
&\stackrel{(W4)}{\le} \alpha_nd(f(x_n),f(y_n)) + (1-\alpha_n)d(T_nx_n,\tilde{T}y_n)\\
&\le \alpha_n\alpha d(x_n,y_n)+(1-\alpha_n)d(T_nx_n,T_ny_n)+(1-\alpha_n)d(T_ny_n,\tilde{T}y_n) \\
&\le \alpha_n\alpha d(x_n,y_n)+(1-\alpha_n)d(x_n,y_n)+(1-\alpha_n)d(T_ny_n,\tilde{T}y_n) \\
& \stackrel{\eqref{Tk-tT-ineq-1}}{\le} (1-(1-\alpha)\alpha_n) d(x_n,y_n) + (1-\alpha_n)\frac{|\lambda_n-\lambda|}{\lambda}d(y_n,\tilde{T}y_n)\\ 
& \stackrel{\eqref{dyn-fyn-tildeTyn}}{=}  
 (1-(1-\alpha)\alpha_n) d(x_n,y_n) + (1-\alpha_n)\alpha_n\frac{|\lambda_n-\lambda|}{\lambda}d(f(y_n),\tilde{T}y_n).
\end{align*}
\end{proof}

\subsubsection{A rate of convergence to $0$ of  $d(x_n,y_n)$}

\begin{lemma}\label{bound-dxnzn}
For all $n\in \N$, $d(x_n,y_n) \leq 2K_z$. 
\end{lemma}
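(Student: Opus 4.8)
The plan is to prove the uniform bound $d(x_n,y_n)\le 2K_z$ by induction on $n$, using the recursive inequality \eqref{desig-xn+1-yn} as the engine and the fact that the perturbation term there is, after bounding $d(f(y_n),\tilde{T}y_n)\le 2K_z$ via \eqref{yn-bounds-hyp-2-more}, controlled by a factor that never exceeds $1$. First I would handle the base case $n=0$: we have $d(x_0,y_0)\le d(x_0,z)+d(z,y_0)\le K_z+K_z=2K_z$, using $d(x_0,z)\le K_z$ from the definition \eqref{def-Kz-main} of $K_z$ and $d(y_0,z)\le K_z$ from \eqref{yn-bounds-hyp-1}. This is the easy part.

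For the induction step, assume $d(x_n,y_n)\le 2K_z$. Starting from \eqref{desig-xn+1-yn}, I would estimate
\[
d(x_{n+1},y_n)\le (1-(1-\alpha)\alpha_n)d(x_n,y_n)+(1-\alpha_n)\alpha_n\frac{|\lambda_n-\lambda|}{\lambda}d(f(y_n),\tilde{T}y_n).
\]
Here I bound $d(f(y_n),\tilde{T}y_n)\le 2K_z$ by \eqref{yn-bounds-hyp-2-more}, and I bound $(1-\alpha_n)\le 1$. The delicate factor is $\alpha_n\frac{|\lambda_n-\lambda|}{\lambda}$: I would observe that $\frac{|\lambda_n-\lambda|}{\lambda}=\left|1-\frac{\lambda_n}{\lambda}\right|\le\max\left\{1,\frac{\lambda_n}{\lambda}-1\right\}$, but this is not obviously $\le 1$ in general. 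However — and this is the real point — combining this term with the contraction $(1-(1-\alpha)\alpha_n)d(x_n,y_n)$, the natural strategy is to show the whole right-hand side is $\le 2K_z$ provided the coefficient of $\alpha_n$ in the perturbation does not exceed $(1-\alpha)\cdot 2K_z$, i.e. provided $\frac{|\lambda_n-\lambda|}{\lambda}\cdot 2K_z\le (1-\alpha)\cdot 2K_z$, that is $\frac{|\lambda_n-\lambda|}{\lambda}\le 1-\alpha$. Since this need not hold for small $n$, the cleaner route is instead to bound directly using the triangle inequality through $z$: write $d(x_{n+1},y_n)\le d(x_{n+1},z)+d(z,y_n)$ and also bound $d(y_{n+1},y_n)$, combining with $d(x_{n+1},y_{n+1})\le d(x_{n+1},y_n)+d(y_n,y_{n+1})$ — but since $d(x_{n+1},z)\le K_z$ (Lemma \ref{xn-bound-as-reg-hyp}.\eqref{xn-bound-hyp}) and $d(z,y_{n+1})\le K_z$ (\eqref{yn-bounds-hyp-1}), we immediately get $d(x_{n+1},y_{n+1})\le 2K_z$ with no induction needed at all.

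So the main obstacle turns out to be illusory: the bound follows in one line from the triangle inequality $d(x_n,y_n)\le d(x_n,z)+d(z,y_n)\le K_z+K_z$, invoking the already-established uniform bounds $d(x_n,z)\le K_z$ (Lemma \ref{xn-bound-as-reg-hyp}.\eqref{xn-bound-hyp}) and $d(y_n,z)\le K_z$ (Lemma \ref{yn-bounds-hyp}, inequality \eqref{yn-bounds-hyp-1}). I would present exactly this: fix $n\in\N$, apply the triangle inequality through the common fixed point $z\in F$, cite the two lemmas for the two summands, and conclude. The recursive inequality \eqref{desig-xn+1-yn} is not needed for this lemma itself (it will be needed afterwards to get the actual rate of convergence of $d(x_n,y_n)$ to $0$ via Xu's lemma, Lemma \ref{quant-lem-Xu02-bncn}), so the proof here is genuinely short.
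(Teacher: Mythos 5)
Your final argument is exactly the paper's proof: the triangle inequality through $z$ combined with $d(x_n,z)\le K_z$ (Lemma \ref{xn-bound-as-reg-hyp}.\eqref{xn-bound-hyp}) and $d(y_n,z)\le K_z$ (inequality \eqref{yn-bounds-hyp-1}). The inductive detour at the start is unnecessary, as you yourself conclude, so the proposal is correct and coincides with the paper's one-line proof.
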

\begin{proof}
We have that for all $n\in \N$,
\begin{align*}
d(x_n,y_n) & \le d(x_n,z) + d(z,y_n) \le 2K_z \quad \text{by Lemma \ref{xn-bound-as-reg-hyp}.\eqref{xn-bound-hyp} and \eqref{yn-bounds-hyp-1}}.
\end{align*}
\end{proof}

Define, for all $n\in \N$, 
\[
Q_n = \frac{2K_z}{\lambda(1-\alpha)}|\lambda_n-\lambda| + \frac{2K_z}{(1-\alpha)^2}\frac{|\alpha_n-\alpha_{n+1}|}{\alpha_n^2}.
\]

\begin{lemma}
For all $n\in \N$, 
\begin{align}\label{main-ineq-rec-dxnyn}
d(x_{n+1},y_{n+1}) \le (1-(1-\alpha)\alpha_n)d(x_n,y_n) +(1-\alpha)\alpha_n Q_n.
\end{align} 
\end{lemma}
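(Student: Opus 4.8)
The plan is to derive \eqref{main-ineq-rec-dxnyn} by splitting via the triangle inequality
\[
d(x_{n+1},y_{n+1})\le d(x_{n+1},y_n)+d(y_n,y_{n+1}),
\]
estimating the first summand with \eqref{desig-xn+1-yn} and the second with \eqref{desig-yn+1-yn}, and then bounding all the ``error'' quantities of the form $d(f(\cdot),\tilde{T}(\cdot))$ crudely by $2K_z$ using \eqref{yn-bounds-hyp-2-more}. The only structural point to watch is that the contractive coefficient $1-(1-\alpha)\alpha_n$ appearing in \eqref{desig-xn+1-yn} must be preserved exactly and not relaxed to $1$, since the resulting recursion is precisely of the shape needed to invoke the quantitative Xu lemma (Lemma \ref{quant-lem-Xu02-bncn}) afterwards to extract a rate of convergence to $0$ of $d(x_n,y_n)$.

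Concretely, I would first apply \eqref{desig-xn+1-yn} and use $(1-\alpha_n)\le 1$ together with $d(f(y_n),\tilde{T}y_n)\le 2K_z$ (from \eqref{yn-bounds-hyp-2-more}) to get
\[
d(x_{n+1},y_n)\le (1-(1-\alpha)\alpha_n)d(x_n,y_n) + \frac{2K_z}{\lambda}\,\alpha_n|\lambda_n-\lambda|.
\]
Then I would apply \eqref{desig-yn+1-yn} with $d(f(y_{n+1}),\tilde{T}y_{n+1})\le 2K_z$ (again by \eqref{yn-bounds-hyp-2-more}) to obtain
\[
d(y_n,y_{n+1})\le \frac{2K_z}{(1-\alpha)\alpha_n}\,|\alpha_{n+1}-\alpha_n|;
\]
here the standing hypothesis $(\alpha_n)\subseteq(0,1]$ of this subsection guarantees $\alpha_n>0$, so dividing by $\alpha_n$ is legitimate.

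Adding the two displayed inequalities and recalling the definition of $Q_n$, it then remains only to check the algebraic identity
\[
\frac{2K_z}{\lambda}\,\alpha_n|\lambda_n-\lambda| + \frac{2K_z}{(1-\alpha)\alpha_n}\,|\alpha_n-\alpha_{n+1}|
= (1-\alpha)\alpha_n\left(\frac{2K_z}{\lambda(1-\alpha)}|\lambda_n-\lambda| + \frac{2K_z}{(1-\alpha)^2}\frac{|\alpha_n-\alpha_{n+1}|}{\alpha_n^2}\right)=(1-\alpha)\alpha_n Q_n,
\]
which is immediate. This yields \eqref{main-ineq-rec-dxnyn}. I expect no genuine obstacle in this argument: it is a routine combination of the already-established one-step estimates together with the uniform bounds of Lemma \ref{yn-bounds-hyp}, and the substantive work (the actual convergence analysis producing a rate of convergence to $0$ of $d(x_n,y_n)$) is deferred to the subsequent application of Lemma \ref{quant-lem-Xu02-bncn}.
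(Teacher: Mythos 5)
Your proposal is correct and follows essentially the same route as the paper's proof: triangle inequality through $y_n$, the one-step estimates \eqref{desig-xn+1-yn} and \eqref{desig-yn+1-yn}, the crude bound $d(f(\cdot),\tilde{T}(\cdot))\le 2K_z$ from \eqref{yn-bounds-hyp-2-more} together with $1-\alpha_n\le 1$, and a final algebraic regrouping to exhibit the factor $(1-\alpha)\alpha_n Q_n$. Nothing further is needed.
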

\begin{proof}
We have that 
\begin{align*}
d(x_{n+1},y_{n+1}) &\le d(x_{n+1},y_n) + d(y_n,y_{n+1})\\
&\le (1-(1-\alpha)\alpha_n)d(x_n,y_n) + (1-\alpha_n)\alpha_n\frac{|\lambda_n-\lambda|}{\lambda}d(f(y_n),\tilde{T}y_n)\\
& \quad +\frac{|\alpha_{n+1}-\alpha_n|}{(1-\alpha)\alpha_n}d(f(y_{n+1}),\tilde{T}y_{n+1}) \quad \text{by \eqref{desig-yn+1-yn} and \eqref{desig-xn+1-yn}} \\
&\le  (1-(1-\alpha)\alpha_n)d(x_n,y_n)+ \alpha_n\frac{2K_z}{\lambda}|\lambda_n-\lambda| + \frac{2K_z}{1-\alpha}\frac{|\alpha_n-\alpha_{n+1}|}{\alpha_n}\\
& \quad \text{by  \eqref{yn-bounds-hyp-2-more} and  the fact that } 1-\alpha_n \leq 1 \\
&= (1-(1-\alpha)\alpha_n)d(x_n,y_n) + (1-\alpha)\alpha_n \frac{2K_z}{\lambda(1-\alpha)}|\lambda_n-\lambda| \\
& \quad + (1-\alpha)\alpha_n \frac{2K_z}{(1-\alpha)^2}\frac{|\alpha_n-\alpha_{n+1}|}{\alpha_n^2} \\
&= (1-(1-\alpha)\alpha_n)d(x_n,y_n)+(1-\alpha)\alpha_n Q_n.
\end{align*}
\end{proof}

\begin{proposition} \label{prop-rate-conv-dxnyn}
Assume that (H1$\alpha_n$) and (H4$\alpha_n$) hold.
Define
\begin{equation} \label{prop-rate-conv-dxnyn-psi}
\psi(k) = \max\left\{\theta_4\left(\left\lceil \frac{4K_z(k+1)}{\lambda(1-\alpha)}\right\rceil-1\right), \, 
\sigma_4\left(\left\lceil \frac{{4K_z(k+1)}}{(1-\alpha)^2}\right\rceil-1\right)\right\}.
\end{equation} 
Then $\lim\limits_{n \to \infty} d(x_n,y_n)=0$ with rate of convergence $\Sigma$ given by
\begin{equation}\label{def-rate-conv-dxnyn}
\Sigma(k) = \sigma_1\left(\left\lceil\frac{\psi(2k+1)+\lceil \ln(4K_z(k+1))\rceil}{1-\alpha}\right\rceil\right) + 1.
\end{equation}
\end{proposition}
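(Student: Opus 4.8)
The plan is to read the recursive estimate \eqref{main-ineq-rec-dxnyn} as an instance of the quantitative Xu-type Lemma~\ref{quant-lem-Xu02-bncn}.\eqref{quant-lem-Xu02-bncn-cn-0}, applied with $s_n := d(x_n,y_n)$, $a_n := (1-\alpha)\alpha_n$ and $b_n := Q_n$, so that \eqref{main-ineq-rec-dxnyn} becomes exactly $s_{n+1}\le(1-a_n)s_n+a_nb_n$. The bulk of the work is then just checking the hypotheses of that lemma and reading off the resulting rate.

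First I would dispatch the routine hypotheses. Since $(\alpha_n)\subseteq(0,1]$ and $\alpha\in[0,1)$, we have $(a_n)\subseteq(0,1-\alpha]\subseteq[0,1]$; by Lemma~\ref{bound-dxnzn}, $L:=2K_z$ is an upper bound on $(s_n)$, and $L\in\N^*$ because $K_z\in\N^*$; and by Lemma~\ref{H1alphan-prop}.\eqref{H1alphan-prop-1}, $\sum_{n}a_n=\sum_n(1-\alpha)\alpha_n$ diverges with rate of divergence $\theta(n)=\sigma_1(\lceil n/(1-\alpha)\rceil)$. The one substantive step is to exhibit $\psi$ witnessing $\lim_n Q_n=0$, i.e.\ $Q_n\le\frac1{k+1}$ for $n\ge\psi(k)$. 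I would split $Q_n$ into its two summands and force each below $\frac1{2(k+1)}$: the first summand $\frac{2K_z}{\lambda(1-\alpha)}|\lambda_n-\lambda|$ drops below $\frac1{2(k+1)}$ once $|\lambda_n-\lambda|\le\frac{\lambda(1-\alpha)}{4K_z(k+1)}$, which by (H4$\lambda_n$) holds for $n\ge\theta_4(\lceil 4K_z(k+1)/(\lambda(1-\alpha))\rceil-1)$; the second summand $\frac{2K_z}{(1-\alpha)^2}\cdot\frac{|\alpha_n-\alpha_{n+1}|}{\alpha_n^2}$ drops below $\frac1{2(k+1)}$ once $\frac{|\alpha_{n+1}-\alpha_n|}{\alpha_n^2}\le\frac{(1-\alpha)^2}{4K_z(k+1)}$, which by (H4$\alpha_n$) holds for $n\ge\sigma_4(\lceil 4K_z(k+1)/(1-\alpha)^2\rceil-1)$. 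Taking $\psi$ to be the maximum of these two thresholds — exactly \eqref{prop-rate-conv-dxnyn-psi} — yields $Q_n\le\frac1{k+1}$ for $n\ge\psi(k)$.

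Finally I would invoke Lemma~\ref{quant-lem-Xu02-bncn}.\eqref{quant-lem-Xu02-bncn-cn-0}, which gives $\lim_n s_n=0$ with rate $\theta(\psi(2k+1)+\lceil\ln(2L(k+1))\rceil)+1$; substituting $\theta(n)=\sigma_1(\lceil n/(1-\alpha)\rceil)$ and $2L=4K_z$ produces precisely the $\Sigma$ claimed in \eqref{def-rate-conv-dxnyn}. I do not expect a genuine obstacle here: the only care needed is the ceiling arithmetic when converting the ``$\le\frac1{j+1}$'' convergence statements of $\theta_4$ and $\sigma_4$ into the sharper bounds on the two summands of $Q_n$, together with the observation that $L=2K_z$ is genuinely a positive integer so that Lemma~\ref{quant-lem-Xu02-bncn} applies verbatim.
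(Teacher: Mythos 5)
Your proposal is correct and follows essentially the same route as the paper: both feed \eqref{main-ineq-rec-dxnyn} into Lemma~\ref{quant-lem-Xu02-bncn}.\eqref{quant-lem-Xu02-bncn-cn-0} with $s_n=d(x_n,y_n)$, $a_n=(1-\alpha)\alpha_n$, $b_n=Q_n$, $L=2K_z$, using Lemma~\ref{bound-dxnzn} and Lemma~\ref{H1alphan-prop}.\eqref{H1alphan-prop-1}. The only cosmetic difference is that you verify $Q_n\le\frac1{k+1}$ for $n\ge\psi(k)$ by splitting $Q_n$ by hand, whereas the paper cites Lemma~\ref{cauchy-conv-rate-linear-comb}.\eqref{cauchy-conv-rate-linear-comb-conv}, which encapsulates exactly that computation and yields the same $\psi$.
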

\begin{proof}
We show that Lemma \ref{quant-lem-Xu02-bncn}.\eqref{quant-lem-Xu02-bncn-cn-0} can be applied with
\begin{equation*}
s_n=d(x_n,y_n),\quad a_n=(1-\alpha)\alpha_n,\quad b_n=Q_n, \quad L=2K_z.
\end{equation*}
First, apply \eqref{main-ineq-rec-dxnyn} to get that $s_{n+1}\leq (1-a_n)s_n + a_nb_n$ for all $n\in\N$. By 
Lemma \ref{bound-dxnzn}, $L$ is an upper bound on $(d(x_n,y_n))$ and,  
by (H1$\alpha_n$) and Lemma \ref{H1alphan-prop}.\eqref{H1alphan-prop-1}, 
 $\sum\limits_{n=0}^\infty a_n$ diverges with rate 
 $\theta(n)=\sigma_1\left(\left\lceil\frac{n}{1-\alpha}\right\rceil\right)$.

As $\lim\limits_{n \to \infty}|\lambda_n-\lambda|=0$ with rate of convergence $\theta_4$ (by (H4$\lambda_n$)) and 
$\lim\limits_{n \to \infty} \frac{|\alpha_{n+1}-\alpha_n|}{\alpha_n^2} =0$ with rate of convergence $\sigma_4$ (by (H4$\alpha_n$)), we can 
apply Lemma \ref{cauchy-conv-rate-linear-comb}.\eqref{cauchy-conv-rate-linear-comb-conv} to get that
 $\lim\limits_{n \to \infty}Q_n =0$ 
with rate of convergence $\psi$.  It follows that $Q_n \le \frac{1}{k+1}$ for all $k,n\in \N$ with $n \ge \psi(k)$.

As the hypotheses of Lemma \ref{quant-lem-Xu02-bncn}.\eqref{quant-lem-Xu02-bncn-cn-0} are satisfied, we apply it to get the conclusion.
\end{proof}

\subsection{Abstract HPPA }

In the hypothesis of Subsection \ref{section-auxiliary-yn}, consider the following iteration: 
\begin{align}\label{def-xnstar}
x^*_0\in C, \qquad x^*_{n+1}=\alpha_n u +(1-\alpha_n)T_nx^*_n,
\end{align}
where $u\in C$ and $(\alpha_n) \subseteq (0,1]$. The iteration $(x^*_n)$ is an abstract version of 
the well-known Halpern-type Proximal Point Algorithm (HPPA), which was
introduced by Xu \cite{Xu02} and Kamimura and Takahashi \cite{KamTak00}.  HPPA is obtained if we take 
$X$ to be a Hilbert space and $T_n=J_{\lambda_n}^A$, where 
$J_{\lambda_n}^A$ is the resolvent of order $\lambda_n$ of a maximally monotone operator $A:X\to 2^X$. 
As pointed out in Subsection \ref{subsection-resolvent-like}, the family $(J_{\lambda_n}^A)$ 
satisfies $Res((\lambda_n),n,m)$ for all $m,n\in \N$. The asymptotic behaviour of this iteration was studied recently, 
with the help of proof mining methods, by Sipo\c s \cite{Sip22} and Kohlenbach and Pinto \cite{KohPin22} 
(they consider the particular case $x^*_0=u$).

As the mapping $f^*:C\to C, \, f^*(x)=u$ is an $\alpha$-contraction with $\alpha=0$, 
\eqref{def-xnstar} is a particular case of the genVAM iteration, obtained by letting $f=f^*$ in 
\eqref{def-genVAM}. Furthermore, by taking $f=f^*$ in \eqref{eq-def-yn}, we get the auxiliary 
sequence $(y^*_n)$ 
satisfying, for all $n\in\N$, 
\begin{equation}\label{eq-def-yns}
y^*_n=\alpha_nu +(1-\alpha_n)\tilde{T}y^*_n.
\end{equation}

Let $z\in F$ and take $K^*_z \in\N^*$ such that 
\begin{align}\label{def-Kzs-main}
K^*_z  \geq \max\left\{d(x^*_0,z), d(u,z)\right\}.
\end{align}

\begin{lemma}\label{bound-dynsz-dynsu}
For all $n\in\N$, $d(y^*_n,z) \leq 2K^*_z$ and $d(y^*_n,u) \leq  3K^*_z$.
\end{lemma}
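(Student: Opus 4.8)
The plan is to recognize that the iteration $(x^*_n)$ together with its auxiliary sequence $(y^*_n)$ is exactly the genVAM data of Subsection \ref{section-auxiliary-yn} specialized to the constant contraction $f = f^*$ with $f^*(x) = u$, which is an $\alpha$-contraction with $\alpha = 0$. Under this identification the defining inequality \eqref{def-Kzs-main} for $K^*_z$ is precisely the instance of \eqref{def-Kz-main} for $(x^*_n)$, because $\frac{d(f^*(z),z)}{1-\alpha} = d(u,z)$ when $\alpha = 0$. Hence $K^*_z$ is an admissible choice of the constant $K_z$, and Lemma \ref{yn-bounds-hyp} applied to $(y^*_n)$ gives $d(y^*_n, z) \leq K^*_z \leq 2K^*_z$ at once; I will record this reduction but also give the short self-contained computation below, since it is only a few lines.

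For the first bound directly, I would use \eqref{eq-def-yns}, property (W1), the fact that $\tilde{T}z = z$ (by Proposition \ref{F=FixtT}, as $z \in F$), and nonexpansivity of $\tilde{T}$:
\begin{align*}
d(y^*_n,z) &= d\bigl(\alpha_n u + (1-\alpha_n)\tilde{T}y^*_n,\, z\bigr) \leq \alpha_n d(u,z) + (1-\alpha_n) d(\tilde{T}y^*_n, \tilde{T}z) \\
&\leq \alpha_n d(u,z) + (1-\alpha_n) d(y^*_n, z).
\end{align*}
Rearranging yields $\alpha_n d(y^*_n,z) \leq \alpha_n d(u,z)$, and since $(\alpha_n) \subseteq (0,1]$ we may divide by $\alpha_n > 0$ to obtain $d(y^*_n,z) \leq d(u,z) \leq K^*_z \leq 2K^*_z$ by \eqref{def-Kzs-main}.

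The second bound is then immediate from the triangle inequality and \eqref{def-Kzs-main}:
\[
d(y^*_n,u) \leq d(y^*_n,z) + d(z,u) \leq 2K^*_z + K^*_z = 3K^*_z.
\]
I do not expect any genuine obstacle here; the only thing worth double-checking is the compatibility of the constant $K^*_z$ with the general setup, namely that $\max\{d(x^*_0,z), d(u,z)\}$ is the right-hand side of \eqref{def-Kz-main} at $\alpha = 0$, which is immediate. (One could alternatively derive $d(y^*_n,u) = (1-\alpha_n)d(\tilde{T}y^*_n, u)$ from \eqref{W10-W-hyp} and then bound $d(\tilde{T}y^*_n,u) \le d(\tilde{T}y^*_n,z) + d(z,u)$, but the triangle-inequality route above is shortest.)
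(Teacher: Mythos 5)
Your proposal is correct and follows essentially the same route as the paper: the paper likewise proves the first bound by applying \eqref{yn-bounds-hyp-1} with $f=f^*$, $\alpha=0$ and $K_z=K^*_z$ (your self-contained computation is just that lemma's proof unwound), and obtains the second bound by the same triangle inequality. Your observation that the bounds actually hold with $K^*_z$ in place of $2K^*_z$ is accurate; the paper simply states the weaker constants.
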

\begin{proof}
Let $n\in\N$. 
Apply \eqref{yn-bounds-hyp-1} with $f=f^*$, $\alpha=0$, $x_0=x^*_0$ and $K_z=K^*_z$ to get the first inequality. Furthermore,
$d(y^*_n,u) \leq d(y^*_n,z) + d(u,z) \leq 3K^*_z$.
\end{proof}

As an immediate consequence of  Proposition \ref{prop-rate-conv-dxnyn}, we get a rate of 
convergence to $0$ of the sequence $(d(x^*_n,y^*_n))$.

\begin{proposition}\label{prop-rate-conv-dxnsyns-abstractHPPA}
Assume that (H1$\alpha_n$) and (H4$\alpha_n$) hold. Then 
$\lim\limits_{n \to \infty} d(x^*_n,y^*_n)=0$ with rate of convergence $\Sigma^*$ given by
\begin{equation}\label{def-rate-conv-dxnyns}
\Sigma^*(k) = \sigma_1\left(\psi^*(2k+1)+\lceil \ln(4K^*_z(k+1))\rceil\right) + 1,
\end{equation}
where
\begin{equation}
\psi^*:\N\to \N, \quad \psi^*(k) = 
\max\left\{\theta_4\left(\left\lceil \frac{4K^*_z(k+1)}{\lambda}\right\rceil-1\right), \, \sigma_4\left(4K^*_z(k+1)-1\right)\right\}.
\end{equation} 
\end{proposition}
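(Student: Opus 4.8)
The plan is to recognize that the abstract HPPA iteration $(x^*_n)$ of \eqref{def-xnstar} is nothing but the genVAM iteration \eqref{def-genVAM} specialized to the constant $\alpha$-contraction $f=f^*$, $f^*(x)=u$, with contraction constant $\alpha=0$ and starting point $x_0=x^*_0$; correspondingly, the auxiliary sequence $(y^*_n)$ from \eqref{eq-def-yns} is exactly the sequence $(y_n)$ of \eqref{eq-def-yn} for this same choice of $f$. Hence the statement should follow by plugging these data into Proposition \ref{prop-rate-conv-dxnyn}.

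First I would verify that the constant $K^*_z$ fixed by \eqref{def-Kzs-main} is a legitimate choice for $K_z$ in \eqref{def-Kz-main}: with $f=f^*$ and $\alpha=0$ one has $f^*(z)=u$ and $d(f(z),z)/(1-\alpha)=d(u,z)$, so the constraint $K_z\geq\max\{d(x_0,z),d(f(z),z)/(1-\alpha)\}$ becomes precisely $K^*_z\geq\max\{d(x^*_0,z),d(u,z)\}$. Next I would observe that the standing assumptions of Subsection \ref{section-auxiliary-yn} --- completeness of $X$, closedness of $C$, $(\alpha_n)\subseteq(0,1]$, $Res((\lambda_n),n,m)$ for all $m,n\in\N$, and (H4$\lambda_n$) --- are in force by hypothesis, while (H1$\alpha_n$) and (H4$\alpha_n$) are assumed; thus all hypotheses of Proposition \ref{prop-rate-conv-dxnyn} are met for this instance.

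Applying Proposition \ref{prop-rate-conv-dxnyn} therefore gives $\lim_{n\to\infty}d(x^*_n,y^*_n)=0$ with rate \eqref{def-rate-conv-dxnyn}, read with $K_z$ replaced by $K^*_z$ and $\alpha$ by $0$. The only remaining work is the routine simplification of the formulas: putting $\alpha=0$ in \eqref{prop-rate-conv-dxnyn-psi} collapses the two arguments to $\lceil 4K^*_z(k+1)/\lambda\rceil-1$ and $\lceil 4K^*_z(k+1)\rceil-1=4K^*_z(k+1)-1$ (the ceiling being superfluous since $4K^*_z(k+1)\in\N$), so $\psi$ becomes $\psi^*$; and putting $\alpha=0$ in \eqref{def-rate-conv-dxnyn} removes the division by $1-\alpha$ and the now-superfluous outer ceiling, yielding $\Sigma^*$ as displayed in \eqref{def-rate-conv-dxnyns}. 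There is essentially no obstacle here --- the proposition is a direct specialization of Proposition \ref{prop-rate-conv-dxnyn}; the two points to keep an eye on are the admissibility of $K^*_z$ for \eqref{def-Kz-main} and the bookkeeping converting the general rates into the special-case rates.
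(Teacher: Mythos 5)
Your proposal is correct and matches the paper's own treatment: the paper obtains Proposition \ref{prop-rate-conv-dxnsyns-abstractHPPA} precisely as an immediate specialization of Proposition \ref{prop-rate-conv-dxnyn} with $f=f^*$, $\alpha=0$ and $K_z=K^*_z$. Your extra checks (admissibility of $K^*_z$ for \eqref{def-Kz-main} and the simplification of $\psi$ and $\Sigma$ when $\alpha=0$) are exactly the bookkeeping the paper leaves implicit.
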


\section{Quantitative asymptotic regularity results}\label{section-rates-as-reg}

The main quantitative results of this section compute, for the genVAM iteration, uniform rates of asymptotic 
regularity (Theorem \ref{main-theorem-1-quant-as-reg}), 
$(T_n)$-asymptotic regularity (Proposition \ref{prop-quant-(Tn)-as-reg} and 
Theorem \ref{main-theorem-2-quant-as-reg}.\eqref{main-theorem-2-quant-as-reg-Tn}) and $T_m$-asymptotic regularity for every $m\in \N$ 
(Proposition \ref{prop-quant-Tm-as-reg}, Theorem \ref{main-theorem-2-quant-as-reg}.\eqref{main-theorem-2-quant-as-reg-tm}).
These results extend to our setting quantitative asymptotic regularity results obtained by the authors \cite{FirLeu25} 
for the VAM iteration associated to resolvents of $m$-accretive operators in Banach spaces, as well as by the author and Pinto \cite{LeuPin21} 
for the HPPA in Hilbert spaces. 
Furthermore, as the abstract HPPA  is a particular case of the genVAM iteration, we get
rates of ($(T_n)$-, $T_m(m\in\N)$-)asymptotic regularity for this iteration, too.
By forgetting the quantitative features, qualitative asymptotic regularity results for the genVAM iteration are immediately obtained 
(see Propositions \ref{main-prop-1-as-reg}, \ref{main-prop-2-as-reg}).

\mbox{}

Throughout this section,  $(X,d,W)$ is a $W$-hyperbolic space,  $C$ is a nonempty convex subset of $X$, 
$f:C\to C$ is an $\alpha$-contraction with $\alpha\in[0,1)$, $(T_n:C\to C)$ is a 
sequence of nonexpansive mappings with $F:=\bigcap_{n\in\N} Fix(T_n) \neq \emptyset$, 
$(\alpha_n) \subseteq [0,1]$, $(x_n)$ is the genVAM iteration defined by \eqref{def-genVAM}, and 
$(\lambda_n) \subseteq (0,\infty)$. Furthermore, we take $z\in F$ and $K_z \in\N^*$ satisfying \eqref{def-Kz-main}.

\begin{theorem}\label{main-theorem-1-quant-as-reg}
Assume that $(T_n)$ satisfies $Res((\lambda_n),n,n+1)$ for all $n\in \N$ and that 
(H1$\alpha_n$), (H2$\alpha_n$), (H1$\lambda_n$) hold. 

Then $(x_n)$ is asymptotically regular with rate $\Phi$ defined by
\begin{equation}\label{quant-as-reg-1-def-Phi}
\Phi(k)= 
\sigma_1^+\left(\left\lceil\frac{\chi(2k+1)+1+\lceil \ln(4K_z(k+1))\rceil}{1-\alpha}\right\rceil+1\right),
\end{equation}
where 
\begin{equation}\label{quant-as-reg-1-def-chi}
\chi:\N\to \N, \quad \chi(k)=\max\{\sigma_2(4K_z(k+1)-1), \theta_1(4K_z(k+1)-1)\}. 
\end{equation}
\end{theorem}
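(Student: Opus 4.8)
The plan is to apply the quantitative Xu-type Lemma \ref{quant-lem-Xu02-bncn}.\eqref{quant-lem-Xu02-bncn-bn-0} to the sequence $s_n = d(x_{n+1},x_n)$, after first establishing the appropriate recursive inequality. The inequality \eqref{main-ineq-apply-Xu-hyp}, which holds because $(T_n)$ satisfies $Res((\lambda_n),n,n+1)$, reads
\[
s_{n+1} \le (1-(1-\alpha)\alpha_{n+1}) s_n + 2K_z\left(|\alpha_{n+1}-\alpha_n| + (1-\alpha_{n+1})\left|1-\tfrac{\lambda_{n+1}}{\lambda_n}\right|\right),
\]
so with $a_n := (1-\alpha)\alpha_{n+1}$ and $c_n := 2K_z\left(|\alpha_{n+1}-\alpha_n| + \left|1-\tfrac{\lambda_{n+1}}{\lambda_n}\right|\right)$ (using $1-\alpha_{n+1}\le 1$) we obtain $s_{n+1} \le (1-a_n)s_n + c_n$, which is exactly the hypothesis of Lemma \ref{quant-lem-Xu02-bncn}.\eqref{quant-lem-Xu02-bncn-bn-0}.

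First I would check the three remaining hypotheses of that lemma. For the upper bound $L$ on $(s_n) = (d(x_{n+1},x_n))$: by Lemma \ref{xn-bound-as-reg-hyp}.\eqref{xn-Tnm-as-reg-hyp} we have $d(x_{n+1},x_n)\le 2K_z$, so $L = 2K_z$ works (note $L\in\N^*$ since $K_z\in\N^*$). For the divergence of $\sum a_n = \sum (1-\alpha)\alpha_{n+1}$: by (H1$\alpha_n$) and Lemma \ref{H1alphan-prop}.\eqref{H1alphan-prop-2}, this series diverges with rate $\theta^*(n) = \sigma_1^+\!\left(\left\lceil\frac{n}{1-\alpha}\right\rceil+1\right)\remin 1$. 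For the convergence of $\sum c_n$ with a Cauchy modulus: $(c_n)\subseteq[0,\infty)$, and $c_n = 2K_z|\alpha_n-\alpha_{n+1}| + 2K_z\left|1-\tfrac{\lambda_{n+1}}{\lambda_n}\right|$ is $2K_z$ times the sum of the two series appearing in (H2$\alpha_n$) and (H1$\lambda_n$); applying Lemma \ref{cauchy-conv-rate-linear-comb}.\eqref{series-Cauchy-linear-comb} with $q = r = 2K_z$, $\varphi_1 = \sigma_2$, $\varphi_2 = \theta_1$ gives that $\sum c_n$ converges with Cauchy modulus
\[
\chi(k) = \max\{\sigma_2(\lceil 4K_z(k+1)\rceil - 1),\, \theta_1(\lceil 4K_z(k+1)\rceil - 1)\},
\]
which matches \eqref{quant-as-reg-1-def-chi} since $4K_z(k+1)$ is already an integer.

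Then I would invoke Lemma \ref{quant-lem-Xu02-bncn}.\eqref{quant-lem-Xu02-bncn-bn-0}, which yields $\lim_{n\to\infty} s_n = 0$ with rate of convergence
\[
\Sigma(k) = \theta^*\!\big(\chi(2k+1) + 1 + \lceil\ln(2L(k+1))\rceil\big) + 1,
\]
and substituting $\theta^*$ and $L = 2K_z$ gives
\[
\Sigma(k) = \sigma_1^+\!\left(\left\lceil\frac{\chi(2k+1) + 1 + \lceil\ln(4K_z(k+1))\rceil}{1-\alpha}\right\rceil + 1\right) \remin 1 + 1,
\]
which equals the claimed $\Phi(k)$ in \eqref{quant-as-reg-1-def-Phi} (the $\remin 1 + 1$ is absorbed since $\sigma_1^+(m)\ge m-1\ge 0$ by Lemma \ref{H1alphan-prop}.\eqref{H1alphan-prop-sigma1}, so $(\sigma_1^+(m)\remin 1)+1 = \sigma_1^+(m)$ whenever $\sigma_1^+(m)\ge 1$, and one checks the edge case separately or simply notes $\Phi$ as written is also a valid, possibly larger, rate). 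A rate of asymptotic regularity of $(x_n)$ is by definition a rate of convergence to $0$ of $(d(x_n,x_{n+1}))$, so this completes the proof. The only mild obstacle is the bookkeeping to see that the $\remin$ and the additive constants line up exactly with the stated $\Phi$; everything else is a direct substitution into lemmas already proved.
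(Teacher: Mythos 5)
Your proposal is correct and follows essentially the same route as the paper's proof: the same instantiation of Lemma \ref{quant-lem-Xu02-bncn}.\eqref{quant-lem-Xu02-bncn-bn-0} with $s_n=d(x_{n+1},x_n)$, $a_n=(1-\alpha)\alpha_{n+1}$, $c_n=2K_z\left(|\alpha_{n+1}-\alpha_n|+\left|1-\frac{\lambda_{n+1}}{\lambda_n}\right|\right)$, $L=2K_z$, using \eqref{main-ineq-apply-Xu-hyp}, Lemma \ref{H1alphan-prop}.\eqref{H1alphan-prop-2} and Lemma \ref{cauchy-conv-rate-linear-comb}.\eqref{series-Cauchy-linear-comb}. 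The paper resolves the final $\remin 1 + 1$ bookkeeping exactly as you indicate, by noting $\Phi(k)\geq 1$ via Lemma \ref{H1alphan-prop}.\eqref{H1alphan-prop-sigma1}, so that $(\Phi(k)\remin 1)+1=\max\{\Phi(k),1\}=\Phi(k)$.
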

\begin{proof}
Let us verify that the hypotheses of Lemma~\ref{quant-lem-Xu02-bncn}.\eqref{quant-lem-Xu02-bncn-bn-0}  hold with 
\[s_n= d(x_{n+1},x_n), \, \, a_n=(1-\alpha)\alpha_{n+1},   \, \,  
c_n:= 2K_z\left(|\alpha_{n+1}-\alpha_n|+\left|1-\frac{\lambda_{n+1}}{\lambda_n}\right|\right),  \, \,  L:=2K_z.
\]

Use \eqref{main-ineq-apply-Xu-hyp} to get that $s_{n+1}\leq (1-a_n)s_n + c_n$ for all $n\in\N$. 
By Lemma~\ref{xn-bound-as-reg-hyp}.\eqref{xn-Tnm-as-reg-hyp}, $L$ 
is an upper bound on $(s_n)$. Apply (H1$\alpha_n$) and Lemma \ref{H1alphan-prop}.\eqref{H1alphan-prop-2} 
to get that $\sum\limits_{n=0}^\infty a_n$ diverges with rate 
\[\theta(n) = \sigma_1^+\left(\left\lceil\frac{n}{1-\alpha}\right\rceil+1\right)\remin 1.\]
As $c_n= 2K_z|\alpha_{n+1}-\alpha_n|+ 2K_z\left|1-\frac{\lambda_{n+1}}{\lambda_n}\right|$, 
$\sigma_2$ is a Cauchy modulus for $\sum\limits_{n=0}^\infty|\alpha_{n+1}-\alpha_n|$ 
(by (H2$\alpha_n$))
and $\theta_1$ is a Cauchy modulus for 
$\sum\limits_{n=0}^\infty\left|1-\frac{\lambda_{n+1}}{\lambda_n}\right|$ (by (H1$\lambda_n$)), 
we obtain from Lemma \ref{cauchy-conv-rate-linear-comb}.\eqref{series-Cauchy-linear-comb} 
that $\chi$ defined by \eqref{quant-as-reg-1-def-chi} is a Cauchy modulus of $(c_n)$.

We can apply, finally, Lemma~\ref{quant-lem-Xu02-bncn}.\eqref{quant-lem-Xu02-bncn-bn-0},  to get that 
$\lim\limits_{n \to \infty}d(x_{n+1},x_n)=0$ with rate of convergence 
\begin{align*}
\Sigma(k)&= \theta\big(\chi(2k+1)+1+\lceil \ln(4K_z(k+1))\rceil\big)+1 = \left(\Phi(k)\remin 1\right)+1 = \max\left\{\Phi(k), 1\right\}= \Phi(k),
\end{align*}
as $\Phi(k) \geq 1$, by Lemma~\ref{H1alphan-prop}.\eqref{H1alphan-prop-sigma1}.
\end{proof}

\begin{remark}\label{main-theorem-remark-1}
Theorem~\ref{main-theorem-1-quant-as-reg} holds if we replace $Res((\lambda_n),n,n+1)$ with  $Res((\lambda_n),n+1,n)$ and 
(H1$\lambda_n$) with (H1*$\lambda_n$) in the hypothesis  and  $\theta_1$ with $\theta_1^*$ in the rates. 
\end{remark}
\begin{proof}
Use \eqref{main-ineq-apply-Xu-hyp-star} instead of \eqref{main-ineq-apply-Xu-hyp} in 
the proof of Theorem~\ref{main-theorem-1-quant-as-reg}.
\end{proof}

\begin{remark}\label{main-theorem-remark-2}
Theorem~\ref{main-theorem-1-quant-as-reg} holds if,  in the hypothesis, we replace 
(H1$\lambda_n$) with (H2$\lambda_n$) and  (H3$\lambda_n$) and  we take
\begin{equation}\label{def-chi-h2-h3lambda}
\chi(k)= \max\{\sigma_2(4K_z(k+1)-1), N_\Lambda,\theta_2(4K_z\Lambda (k+1)-1)\}.
\end{equation}
\end{remark}
\begin{proof}
Apply Lemma~\ref{H2lambda+H3lambda-implies-H1lambda-H1*lambda} to get that
 (H1$\lambda_n$)  holds with 
$\theta_1(k)=\max\{N_\Lambda,\theta_2(\Lambda (k+1)-1)\}$, so $\chi$  is given by 
\eqref{def-chi-h2-h3lambda}, and follow the proof of Theorem \ref{main-theorem-1-quant-as-reg}.
\end{proof}

\begin{proposition}\label{prop-quant-(Tn)-as-reg}
Let $\Phi$ be a rate of asymptotic regularity of $(x_n)$ and  assume that (H3$\alpha_n$) holds.

Then $(x_n)$ is $(T_n)$-asymptotically regular with rate $\Psi$ defined by
\begin{equation}
\Psi(k)=\max\{\sigma_3(4K_z(k+1)-1),\Phi(2k+1)\}.
\end{equation}
\end{proposition}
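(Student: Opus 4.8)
The plan is to start from the pointwise estimate \eqref{dxnTnxn-leq}, which gives $d(x_n, T_nx_n) \leq d(x_n, x_{n+1}) + 2K_z\alpha_n$ for all $n\in\N$. This already decomposes the quantity whose convergence to $0$ we must rate into a part governed by asymptotic regularity of $(x_n)$ and a part governed by the convergence $\alpha_n\to 0$ supplied by (H3$\alpha_n$). Thus it suffices, given $k\in\N$, to force each of the two summands to be at most $\tfrac{1}{2(k+1)}$ from some index on, and then take the maximum of the two thresholds.

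First I would bound the contribution of the parameters: since (H3$\alpha_n$) asserts $\lim_{n\to\infty}\alpha_n=0$ with rate of convergence $\sigma_3$, for every $n\geq \sigma_3\big(4K_z(k+1)-1\big)$ we get $\alpha_n\leq \tfrac{1}{4K_z(k+1)}$, hence $2K_z\alpha_n\leq \tfrac{1}{2(k+1)}$. Next I would bound the asymptotic-regularity contribution: since $\Phi$ is by hypothesis a rate of convergence to $0$ of $\big(d(x_n,x_{n+1})\big)$, for every $n\geq \Phi(2k+1)$ we have $d(x_n,x_{n+1})\leq \tfrac{1}{(2k+1)+1}=\tfrac{1}{2(k+1)}$.

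Finally, combining the two: for every $n\geq \Psi(k)=\max\{\sigma_3(4K_z(k+1)-1),\Phi(2k+1)\}$ both of the above estimates hold simultaneously, so \eqref{dxnTnxn-leq} yields $d(x_n,T_nx_n)\leq \tfrac{1}{2(k+1)}+\tfrac{1}{2(k+1)}=\tfrac{1}{k+1}$, which is exactly the assertion that $\Psi$ is a rate of $(T_n)$-asymptotic regularity of $(x_n)$. There is no real obstacle here; the only point requiring care is the off-by-one bookkeeping in the ceilings so that the arguments fed to $\sigma_3$ and $\Phi$ produce bounds matching the $\tfrac{1}{k+1}$ convention, which is precisely what the choices $4K_z(k+1)-1$ and $2k+1$ achieve.
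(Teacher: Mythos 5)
Your proof is correct and coincides with the paper's own argument: both start from the estimate \eqref{dxnTnxn-leq} and bound $d(x_n,x_{n+1})$ and $2K_z\alpha_n$ each by $\frac{1}{2(k+1)}$ using $\Phi(2k+1)$ and $\sigma_3(4K_z(k+1)-1)$ respectively. The bookkeeping with the rates is exactly as in the paper, so there is nothing to add.
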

\begin{proof}
For all $n \ge \Psi(k)$, 
\begin{align*}
d(T_nx_n , x_n) & \stackrel{\eqref{dxnTnxn-leq}}{\le}   d(x_{n+1},x_n)  + 2K_z\alpha_n \le  \frac{1}{k+1},
\end{align*}
as $n\geq \Phi(2k+1)$, so $d(x_{n+1},x_n)\leq \frac{1}{2(k+1)}$ and  
$n\geq \sigma_3(4K_z(k+1)-1)$, so $2K_z\alpha_n \leq \frac{1}{2(k+1)}$. 
\end{proof}

\begin{proposition}\label{prop-quant-Tm-as-reg}
Let $\Psi$ be a rate of $(T_n)$-asymptotic regularity of $(x_n)$ and $m\in\N$. 
Assume that $(T_n)$ satisfies $Res((\lambda_n),n,m)$ for all $n\in \N$ and that  (H2$\lambda_n$) 
holds.

Then $(x_n)$ is $T_m$-asymptotically regular with rate  $\Psi_m$ defined by
\begin{equation}
\Psi_m(k)=\max\left\{N_\Lambda,\Psi(\Lambda_m\Lambda(k+1)-1),\Psi(2k+1)\right\},
\end{equation}
where $\Lambda_m\in\N^*$ is such that $\Lambda_m\geq \lambda_m$.
\end{proposition}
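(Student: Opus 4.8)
The plan is to control $d(T_mx_n,x_n)$ by $d(T_nx_n,x_n)$ using the consequences of $Res((\lambda_n),n,m)$ recorded in inequalities \eqref{res-lambda-m-geq-n} and \eqref{res-lambda-m-less-n}, and then to apply the rate $\Psi$ of $(T_n)$-asymptotic regularity at a suitably shifted argument. The key observation is that the multiplicative constant relating the two quantities is governed by whether $\lambda_m\geq \lambda_n$ or $\lambda_m<\lambda_n$: in the first case it is $\lambda_m/\lambda_n$, which, once $\lambda_n$ is bounded below, does not exceed $\Lambda_m\Lambda$; in the second case it is $2-\lambda_m/\lambda_n\leq 2$. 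This dichotomy is exactly what produces the two arguments $\Lambda_m\Lambda(k+1)-1$ and $2k+1$ of $\Psi$ occurring in $\Psi_m$.

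\textbf{Main steps.} Fix $k\in\N$ and let $n\geq \Psi_m(k)$. First, since $n\geq N_\Lambda$, hypothesis (H2$\lambda_n$) gives $\lambda_n\geq \tfrac1\Lambda$, whence, using $\lambda_m\leq\Lambda_m$, we get $\lambda_m/\lambda_n\leq \Lambda_m\Lambda$. Now split into two cases. If $\lambda_m\geq \lambda_n$, inequality \eqref{res-lambda-m-geq-n} yields $d(T_mx_n,x_n)\leq (\lambda_m/\lambda_n)\,d(T_nx_n,x_n)\leq \Lambda_m\Lambda\,d(T_nx_n,x_n)$; since $n\geq \Psi(\Lambda_m\Lambda(k+1)-1)$ we have $d(x_n,T_nx_n)\leq \tfrac1{\Lambda_m\Lambda(k+1)}$, and therefore $d(T_mx_n,x_n)\leq \tfrac1{k+1}$. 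If instead $\lambda_m< \lambda_n$, inequality \eqref{res-lambda-m-less-n} gives $d(T_mx_n,x_n)\leq (2-\lambda_m/\lambda_n)\,d(T_nx_n,x_n)\leq 2\,d(T_nx_n,x_n)$; since $n\geq \Psi(2k+1)$ we have $d(x_n,T_nx_n)\leq \tfrac1{2(k+1)}$, so again $d(T_mx_n,x_n)\leq \tfrac1{k+1}$.

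\textbf{Conclusion and the delicate point.} In either case $d(x_n,T_mx_n)\leq \tfrac1{k+1}$ for all $n\geq \Psi_m(k)$, which is precisely the statement that $\Psi_m$ is a rate of $T_m$-asymptotic regularity of $(x_n)$. There is no substantial obstacle here; the only points needing attention are that the lower bound $\lambda_n\geq \tfrac1\Lambda$ is available only for $n\geq N_\Lambda$ — which is why $N_\Lambda$ must enter the maximum defining $\Psi_m$ — and that, since $\Psi$ is not assumed to be monotone, the two cases must be absorbed by two separate instances of $\Psi$ joined by a maximum rather than by a single instance $\Psi\big(\max\{2,\Lambda_m\Lambda\}(k+1)-1\big)$.
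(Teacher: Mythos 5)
Your proof is correct and is essentially identical to the paper's argument: bound $\lambda_m/\lambda_n\leq\Lambda_m\Lambda$ using (H2$\lambda_n$) for $n\geq N_\Lambda$, then split into the cases $\lambda_m\geq\lambda_n$ and $\lambda_m<\lambda_n$ via \eqref{res-lambda-m-geq-n} and \eqref{res-lambda-m-less-n}, absorbing each case with the corresponding instance of $\Psi$ in the maximum defining $\Psi_m$. No issues.
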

\begin{proof}
We adapt to our setting the proof of \cite[Theorem 5.4.(ii)]{FirLeu25}. Let $n\ge \Psi_m(k)$. By the definition of $\Psi_m$, we have that 
$\frac1{\lambda_n}\leq \Lambda$, $d(T_nx_n,x_n) \leq \frac1{\Lambda_m\Lambda(k+1)}$ and $d(T_nx_n,x_n) \leq \frac{1}{2(k+1)}$. 

If $\lambda_m \ge \lambda_n$, then $d(T_mx_n ,x_n) \stackrel{\eqref{res-lambda-m-geq-n}}{\le} \frac{\lambda_m}{\lambda_n}d(T_nx_n,x_n) 
\le  \Lambda_m\Lambda d(T_nx_n,x_n) \le  \frac{1}{k+1}$.
If $\lambda_m<\lambda_n$, then $d(T_mx_n,x_n)  \stackrel{\eqref{res-lambda-m-less-n}}{\le}   
\left(2-\frac{\lambda_m}{\lambda_n}\right)d(T_nx_n,x_n) \le 2 d(T_nx_n,x_n) \le  \frac{1}{k+1}$.
\end{proof}

By forgetting the quantitative features of the previous results, we obtain the following qualitative 
asymptotic regularity theorem for genVAM in $W$-hyperbolic spaces.

\begin{proposition}\label{main-prop-1-as-reg}
Assume that 
\begin{center} $\sum\limits_{n=0}^{\infty} \alpha_n = \infty$, \quad 
$\sum\limits_{n=0}^{\infty} |\alpha_n-\alpha_{n+1}| <\infty$,  \quad 
$\lim\limits_{n\to\infty}\alpha_n=0$, \quad  $\liminf\limits_{n\to\infty} \lambda_n>0$
\end{center}
and one of the following holds:
\begin{center}
$\sum\limits_{n=0}^{\infty}|\lambda_n-\lambda_{n+1}| <\infty$, \quad  
$\sum\limits_{n=0}^{\infty} \left|1-\frac{\lambda_{n+1}}{\lambda_n}\right|<\infty$,  \quad  
$\sum\limits_{n=0}^{\infty} \left|1-\frac{\lambda_n}{\lambda_{n+1}}\right|<\infty$.
\end{center}
If $(T_n)$ satisfies $Res((\lambda_n),n,m)$ for all $m,n\in \N$, then 
\begin{center}
$\lim\limits_{n\to\infty} d(x_{n+1},x_n)=\lim\limits_{n\to\infty} d(x_n,T_nx_n)=0$ and 
$\lim\limits_{n\to\infty} d(x_n,T_mx_n)=0$ for all $m\in \N$.
\end{center}
\end{proposition}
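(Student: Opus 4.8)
The plan is to derive Proposition \ref{main-prop-1-as-reg} as a purely qualitative consequence of the quantitative machinery already assembled, namely Theorem \ref{main-theorem-1-quant-as-reg} (together with Remarks \ref{main-theorem-remark-1} and \ref{main-theorem-remark-2}), Proposition \ref{prop-quant-(Tn)-as-reg} and Proposition \ref{prop-quant-Tm-as-reg}. The key observation is that each of the qualitative hypotheses is exactly the ``forgetful'' version of one of the quantitative hypotheses (H1$\alpha_n$)--(H3$\alpha_n$), (H1$\lambda_n$), (H1$^*\lambda_n$), (H2$\lambda_n$), (H3$\lambda_n$): a convergent series of nonnegative reals has a Cauchy modulus, a divergent series of nonnegative reals has a rate of divergence, and a convergent sequence has a rate of convergence; similarly $\liminf_{n\to\infty}\lambda_n>0$ yields some $\Lambda\in\N^*$ and $N_\Lambda\in\N$ with $\lambda_n\ge 1/\Lambda$ for $n\ge N_\Lambda$, which is precisely (H2$\lambda_n$). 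So the first step is simply to record that the stated assumptions supply all the data ($\sigma_1,\sigma_2,\sigma_3$; one of $\theta_1,\theta_1^*$, or the pair $\Lambda,N_\Lambda,\theta_2$; and $\Lambda$, $N_\Lambda$) needed to invoke the quantitative results — the existence of these moduli being non-effective here, but that is irrelevant for a qualitative conclusion.

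Next I would split into the three cases according to which of the three divergence-free conditions on $(\lambda_n)$ holds, and in each case apply the appropriate form of Theorem \ref{main-theorem-1-quant-as-reg}: if $\sum_n|1-\lambda_{n+1}/\lambda_n|<\infty$ use the theorem as stated (with $Res((\lambda_n),n,n+1)$); if $\sum_n|1-\lambda_n/\lambda_{n+1}|<\infty$ use Remark \ref{main-theorem-remark-1} (with $Res((\lambda_n),n+1,n)$); if $\sum_n|\lambda_n-\lambda_{n+1}|<\infty$ use Remark \ref{main-theorem-remark-2}, which also requires (H2$\lambda_n$), available from $\liminf_{n\to\infty}\lambda_n>0$. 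In all three cases the hypothesis $Res((\lambda_n),n,m)$ for all $m,n$ certainly implies the one-step instance $Res((\lambda_n),n,n+1)$ or $Res((\lambda_n),n+1,n)$ required, so we conclude that $(x_n)$ is asymptotically regular, i.e.\ $\lim_{n\to\infty}d(x_{n+1},x_n)=0$, with rate $\Phi$ as produced by whichever form was used.

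Having a rate $\Phi$ of asymptotic regularity, I would then feed it into Proposition \ref{prop-quant-(Tn)-as-reg}, whose remaining hypothesis (H3$\alpha_n$) is exactly $\lim_{n\to\infty}\alpha_n=0$; this gives a rate $\Psi$ of $(T_n)$-asymptotic regularity, so $\lim_{n\to\infty}d(x_n,T_nx_n)=0$. Finally, fixing an arbitrary $m\in\N$, the full resolvent-like hypothesis $Res((\lambda_n),n,m)$ for all $n$ together with (H2$\lambda_n$) lets me apply Proposition \ref{prop-quant-Tm-as-reg} (choosing any $\Lambda_m\in\N^*$ with $\Lambda_m\ge\lambda_m$, which exists) to obtain a rate $\Psi_m$ of $T_m$-asymptotic regularity, hence $\lim_{n\to\infty}d(x_n,T_mx_n)=0$. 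Since $m$ was arbitrary, this completes the proof.

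There is essentially no mathematical obstacle here: all the work has been done in the quantitative propositions, and the only ``step'' is the routine but conceptually important translation between qualitative analytic statements and the existence of the corresponding moduli, plus bookkeeping of which theorem variant applies in which $\lambda$-case. The one point deserving a sentence of care is that the hypothesis ``$(T_n)$ satisfies $Res((\lambda_n),n,m)$ for all $m,n\in\N$'' is genuinely used in two distinct ways — the one-step instances for Theorem \ref{main-theorem-1-quant-as-reg} and its remarks, and the full family for Proposition \ref{prop-quant-Tm-as-reg} — so I would make that explicit rather than just citing ``the resolvent condition''.
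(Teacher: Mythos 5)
Your proposal is correct and is essentially the paper's own argument: the paper states this proposition as the result of ``forgetting the quantitative features'' of Theorem \ref{main-theorem-1-quant-as-reg} (with Remarks \ref{main-theorem-remark-1} and \ref{main-theorem-remark-2}) and Propositions \ref{prop-quant-(Tn)-as-reg} and \ref{prop-quant-Tm-as-reg}, which is exactly the case split and modulus-existence translation you carry out. Your explicit bookkeeping (including extracting (H2$\lambda_n$) from $\liminf_{n\to\infty}\lambda_n>0$) just spells out what the paper leaves implicit.
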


\begin{remark}
Assume that $C$ is bounded with diameter $d_C$. Then Theorem \ref{main-theorem-1-quant-as-reg} and 
Propositions \ref{prop-quant-(Tn)-as-reg}, 
\ref{prop-quant-Tm-as-reg} hold with $K_z=\left\lceil \frac{d_C}{1-\alpha}\right\rceil$. 
Thus, the rates of ($(T_n), T_m$-) asymptotic regularity 
of the genVAM iteration do not depend at all on the $W$-hyperbolic space $X$, the family of 
mappings $(T_n)$, the starting point $x_0$ of the 
iteration. They depend via $\alpha$ on the mapping $f$ and  on the subset $C$ only via its diameter $d_C$.
\end{remark}

Inspired by \cite[Theorem 2]{LeuPin21}, we compute additional rates by using the auxiliary sequence 
$(y_n)$ studied in Subsection \ref{section-auxiliary-yn}. 

\begin{theorem}\label{main-theorem-2-quant-as-reg}
Assume that $X$ is complete, $C$ is closed, $(T_n)$ satisfies $Res((\lambda_n),n,m)$ for all $m,n\in \N$, and $(\alpha_n) \subseteq (0,1]$. Suppose that 
(H1$\alpha_n$),  (H3$\alpha_n$), (H4$\alpha_n$) and (H4$\lambda_n$) hold and that $l\in\N^*$ is such that $\lambda > \frac{1}{l+1}$. Then 
\begin{enumerate}
\item\label{main-theorem-2-quant-as-reg-tT} $(x_n)$ is $\tilde{T}$-asymptotically regular with rate 
$\tilde{\Sigma}$ defined by
\begin{equation*}
\tilde{\Sigma}(k)=\max\left\{\sigma_3(4K_z(k+1)-1),\Sigma(4k+3)\right\},
\end{equation*}
where $\Sigma$ is defined by \eqref{def-rate-conv-dxnyn}.
\item\label{main-theorem-2-quant-as-reg-Tn} $(x_n)$ is $(T_n)$-asymptotically regular with rate  
$\Psi$ defined by
\begin{equation*}
\Psi(k)=\max\{\theta_4(l),\tilde{\Sigma}(2k+1)\},
\end{equation*}
\item\label{main-theorem-2-quant-as-reg-tm} For every $m\in \N$, $(x_n)$ is $T_m$-asymptotically regular 
with rate of asymptotic regularity $\Psi_m$ defined by
\begin{equation*}
\Psi_m(k)=\tilde{\Sigma}((1+(l+1)\Lambda^*_m)(k+1)-1),
\end{equation*}
where $\Lambda^*_m \in \N$ is such that $\Lambda^*_m \ge |\lambda-\lambda_m|$.
\end{enumerate}
\end{theorem}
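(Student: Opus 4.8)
The plan is to establish (i) first and then deduce (ii) and (iii) from it, all built on the auxiliary sequence $(y_n)$ and the limit mapping $\tilde{T}$ from Subsection~\ref{section-auxiliary-yn}. For (i), the key estimate is the triangle inequality $d(x_n,\tilde{T}x_n)\le d(x_n,y_n)+d(y_n,\tilde{T}y_n)+d(\tilde{T}y_n,\tilde{T}x_n)$; since $z\in F=Fix(\tilde{T})$ by Proposition~\ref{F=FixtT} and $\tilde{T}$ is nonexpansive, the last term is at most $d(y_n,x_n)$, so $d(x_n,\tilde{T}x_n)\le 2d(x_n,y_n)+d(y_n,\tilde{T}y_n)$. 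By \eqref{dyn-fyn-tildeTyn} and \eqref{yn-bounds-hyp-2-more} one has $d(y_n,\tilde{T}y_n)=\alpha_n d(f(y_n),\tilde{T}y_n)\le 2K_z\alpha_n$, which tends to $0$ with a rate read off from (H3$\alpha_n$): $2K_z\alpha_n\le\frac{1}{2(k+1)}$ whenever $n\ge\sigma_3(4K_z(k+1)-1)$. Since (H1$\alpha_n$) and (H4$\alpha_n$) are assumed, Proposition~\ref{prop-rate-conv-dxnyn} gives $d(x_n,y_n)\to 0$ with rate $\Sigma$ from \eqref{def-rate-conv-dxnyn}, so $2d(x_n,y_n)\le\frac{1}{2(k+1)}$ for $n\ge\Sigma(4k+3)$. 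Adding the two halves yields $d(x_n,\tilde{T}x_n)\le\frac{1}{k+1}$ for $n\ge\tilde{\Sigma}(k)$, which is (i).

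For (ii) and (iii) the common device is $d(x_n,T\cdot x_n)\le d(x_n,\tilde{T}x_n)+d(T\cdot x_n,\tilde{T}x_n)$ together with the estimate \eqref{Tk-tT-ineq-1} applied at the point $x:=x_n$. For (ii), applying it with $k:=n$ gives $d(T_nx_n,\tilde{T}x_n)\le (l+1)|\lambda_n-\lambda|\,d(x_n,\tilde{T}x_n)$; for $n\ge\theta_4(l)$ the rate of convergence of $(\lambda_n)$ yields $|\lambda_n-\lambda|\le\frac{1}{l+1}$, hence $(l+1)|\lambda_n-\lambda|\le 1$ and $d(x_n,T_nx_n)\le 2\,d(x_n,\tilde{T}x_n)$. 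Combining this with part (i) at level $2k+1$ (so $d(x_n,\tilde{T}x_n)\le\frac{1}{2(k+1)}$) gives $d(x_n,T_nx_n)\le\frac{1}{k+1}$ for $n\ge\Psi(k)=\max\{\theta_4(l),\tilde{\Sigma}(2k+1)\}$, which is (ii). For (iii), fix $m$ and apply \eqref{Tk-tT-ineq-1} with $k:=m$: $d(T_mx_n,\tilde{T}x_n)\le (l+1)|\lambda_m-\lambda|\,d(x_n,\tilde{T}x_n)\le (l+1)\Lambda^*_m\,d(x_n,\tilde{T}x_n)$, whence $d(x_n,T_mx_n)\le\bigl(1+(l+1)\Lambda^*_m\bigr)d(x_n,\tilde{T}x_n)$; invoking part (i) at level $(1+(l+1)\Lambda^*_m)(k+1)-1$ produces the rate $\Psi_m$.

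I do not anticipate a genuine obstacle: the theorem packages the rate $\Sigma$ for $d(x_n,y_n)$ from Proposition~\ref{prop-rate-conv-dxnyn} with the elementary estimates above. The two points needing minimal care are the bookkeeping in (i) — splitting the target $\frac{1}{k+1}$ into two contributions of size $\frac{1}{2(k+1)}$, which is exactly what forces the arguments $4K_z(k+1)-1$ and $4k+3$ in $\tilde{\Sigma}$ — and the observation in (ii) that $n\ge\theta_4(l)$ makes the factor $1+(l+1)|\lambda_n-\lambda|$ drop to at most $2$, so that part (i) can be applied with only a constant-factor loss; everything else reduces to the triangle inequality, nonexpansivity of $\tilde{T}$, and the already-proved properties \eqref{dyn-fyn-tildeTyn}, \eqref{yn-bounds-hyp-2-more}, \eqref{Tk-tT-ineq-1}.
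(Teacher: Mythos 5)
Your proposal is correct and follows essentially the same route as the paper: the decomposition $d(x_n,\tilde{T}x_n)\le 2d(x_n,y_n)+d(y_n,\tilde{T}y_n)$ combined with Proposition~\ref{prop-rate-conv-dxnyn}, \eqref{dyn-fyn-tildeTyn}, \eqref{yn-bounds-hyp-2-more} and (H3$\alpha_n$) for (i), and then \eqref{Tk-tT-ineq-1} applied at $x:=x_n$ with index $n$ (resp.\ $m$) for (ii) and (iii), with exactly the paper's bookkeeping of the arguments $4k+3$, $4K_z(k+1)-1$, $2k+1$ and $(1+(l+1)\Lambda^*_m)(k+1)-1$. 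The only superfluous remark is the appeal to $z\in F=Fix(\tilde{T})$ in part (i), where nonexpansivity of $\tilde{T}$ alone suffices.
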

\begin{proof}
Recall that $(y_n)$ is defined by \eqref{eq-def-yn}. Let $k\in \N$ be arbitrary.
\begin{enumerate}
\item For all $n\geq \tilde{\Sigma}(k)$, 
\begin{align*}
d(x_n,\tilde{T}x_n) &\le d(x_n,y_n) + d(y_n, \tilde{T}y_n) + d(\tilde{T}y_n,\tilde{T}x_n)\le 2d(x_n,y_n)+d(y_n, \tilde{T}y_n)\\
&\le \frac{1}{2(k+1)}+ d(y_n, \tilde{T}y_n) \quad \text{by Proposition \ref{prop-rate-conv-dxnyn}, } \text{as }n\geq \Sigma(4k+3)\\
& \stackrel{\eqref{W10-W-hyp}}{=} \frac{1}{2(k+1)} + \alpha_nd(f(y_n),\tilde{T}y_n) 
\stackrel{\eqref{yn-bounds-hyp-2-more}}{\le} \frac{1}{2(k+1)} + 2\alpha_n K_z \\
&\le \frac{1}{k+1} 
\quad \text{ as } n \geq \sigma_3(4K_z(k+1)-1).
\end{align*}
\item For all $n \ge \Psi(k)$, 
\begin{align*}
d(x_n,T_nx_n) &\le d(x_n,\tilde{T}x_n) + d(\tilde{T}x_n,T_nx_n) 
\stackrel{\eqref{Tk-tT-ineq-1}}{\le}  d(x_n,\tilde{T}x_n)+(l+1)|\lambda_n-\lambda|d(x_n,\tilde{T}x_n)  \\
&\le  2d(x_n,\tilde{T}x_n) \quad \text{ as } n \ge \theta_4(l)\\
&\le  \frac{1}{k+1} \quad \text{~since~} n \ge \tilde{\Sigma}(2k+1).
\end{align*}

\item Let $m\in \N$. For all $n \ge \Psi_m(k)$, 
\begin{align*}
d(x_n,T_mx_n) &\le d(x_n,\tilde{T}x_n)+d(\tilde{T}x_n,T_mx_n)
\stackrel{\eqref{Tk-tT-ineq-1}}{\le}  d(x_n,\tilde{T}x_n)+(l+1)|\lambda_m-\lambda|d(x_n,\tilde{T}x_n)\\
&\le d(x_n,\tilde{T}x_n)+(l+1)\Lambda^*_md(x_n,\tilde{T}x_n) \quad \text{by hypothesis}\\
&= (1+(l+1)\Lambda^*_m)d(x_n,\tilde{T}x_n)\le \frac{1}{k+1}  
\quad \text{by \eqref{main-theorem-2-quant-as-reg-tT} and the definition of }\Psi_m(k).
\end{align*}
\end{enumerate}
\end{proof}

As before, we get as an immediate consequence of Theorem \ref{main-theorem-2-quant-as-reg} the 
following qualitative result.

\begin{proposition}\label{main-prop-2-as-reg}
Assume that $X$ is complete, $C$ is closed, $(\alpha_n) \subseteq (0,1]$ and
$(T_n)$ satisfies $Res((\lambda_n),n,m)$ for all $m,n\in \N$. 
Suppose that the following hold: 
\begin{center} 
$\sum\limits_{n=0}^{\infty} \alpha_n = \infty$, $\lim\limits_{n\to\infty}\alpha_n=0$, 
$\lim\limits_{n\to\infty} \frac{|\alpha_{n+1}-\alpha_n|}{\alpha_n^2} =0$ \text{ and }
$\lim\limits_{n\to\infty}\lambda_n = \lambda >0$.
\end{center} 
Then 
\begin{center}
$\lim\limits_{n\to\infty} d(x_n,\tilde{T}x_n)=\lim\limits_{n\to\infty} d(x_n,T_nx_n)=0$ and 
$\lim\limits_{n\to\infty} d(x_n,T_mx_n)=0$ for all $m\in \N$.
\end{center}
\end{proposition}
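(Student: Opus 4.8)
The plan is to obtain all three limits as the rate-free shadow of the quantitative Theorem~\ref{main-theorem-2-quant-as-reg}. Each hypothesis listed here is exactly the qualitative form of one of the quantitative hypotheses used there: $\sum_{n=0}^\infty \alpha_n = \infty$ provides (non-effectively) a rate of divergence $\sigma_1$, hence (H1$\alpha_n$); $\lim_{n\to\infty}\alpha_n = 0$ provides a rate $\sigma_3$, hence (H3$\alpha_n$); $\lim_{n\to\infty} |\alpha_{n+1}-\alpha_n|/\alpha_n^2 = 0$ provides a rate $\sigma_4$, hence (H4$\alpha_n$); and $\lim_{n\to\infty}\lambda_n = \lambda > 0$ provides a rate $\theta_4$, hence (H4$\lambda_n$). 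Since $\lambda > 0$, one may fix $l \in \N^*$ with $\lambda > \frac{1}{l+1}$, the only remaining numerical datum required by Theorem~\ref{main-theorem-2-quant-as-reg}.

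With these choices, Theorem~\ref{main-theorem-2-quant-as-reg}.\eqref{main-theorem-2-quant-as-reg-tT} yields a rate witnessing $d(x_n,\tilde{T} x_n) \to 0$, part~\eqref{main-theorem-2-quant-as-reg-Tn} a rate witnessing $d(x_n, T_n x_n) \to 0$, and part~\eqref{main-theorem-2-quant-as-reg-tm}, applied separately for each fixed $m \in \N$ with any integer $\Lambda^*_m \ge |\lambda - \lambda_m|$, a rate witnessing $d(x_n, T_m x_n) \to 0$. Discarding the explicit rates yields the statement.

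For a self-contained qualitative argument one can instead run the underlying scheme directly. From \eqref{main-ineq-rec-dxnyn}, the divergence of $\sum_{n} \alpha_n$, and the fact that $Q_n \to 0$ (which holds because $|\lambda_n - \lambda| \to 0$ and $|\alpha_n - \alpha_{n+1}|/\alpha_n^2 \to 0$), the qualitative version of Xu's lemma underlying Lemma~\ref{quant-lem-Xu02-bncn}.\eqref{quant-lem-Xu02-bncn-cn-0} gives $d(x_n,y_n) \to 0$ (this is also Proposition~\ref{prop-rate-conv-dxnyn} with the rate forgotten). Then \eqref{W10-W-hyp} and \eqref{yn-bounds-hyp-2-more} give $d(x_n,\tilde{T} x_n) \le 2 d(x_n,y_n) + 2K_z\alpha_n \to 0$ since $\alpha_n \to 0$; and \eqref{Tk-tT-ineq-1} together with $\lambda_n \to \lambda$ gives $d(x_n, T_n x_n) \le (1 + (l+1)|\lambda_n-\lambda|)\, d(x_n,\tilde{T} x_n) \to 0$, while the same inequality with $\lambda_m$ in place of $\lambda_n$ gives $d(x_n, T_m x_n) \le (1 + (l+1)|\lambda_m-\lambda|)\, d(x_n,\tilde{T}x_n) \to 0$ for each fixed $m$. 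The only mildly delicate step, in either route, is $d(x_n,y_n)\to 0$, since it is the sole place where the tension between the divergence of $\sum \alpha_n$ and the decay of $Q_n$ is exploited; the rest are routine estimates with bounded factors and vanishing coefficients.
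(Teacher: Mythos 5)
Your proposal is correct and matches the paper's own argument: Proposition~\ref{main-prop-2-as-reg} is obtained there precisely as the rate-free consequence of Theorem~\ref{main-theorem-2-quant-as-reg}, since each qualitative hypothesis classically yields some rate witnessing (H1$\alpha_n$), (H3$\alpha_n$), (H4$\alpha_n$), (H4$\lambda_n$). Your supplementary ``self-contained'' route is just the same underlying scheme (Proposition~\ref{prop-rate-conv-dxnyn} plus the estimates in the proof of Theorem~\ref{main-theorem-2-quant-as-reg}) with the rates suppressed, and its steps check out.
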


\begin{corollary}
Let $(x^*_n)$ be the abstract HPPA, defined by \eqref{def-xnstar}. By taking $\alpha:=0$ and $K_z:=K^*_z$ (with $K^*_z\in\N^*$ satisfying \eqref{def-Kzs-main})
in the different rates computed in the previous results, we obtain corresponding rates for $(x^*_n)$.
\end{corollary}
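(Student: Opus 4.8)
The plan is to recognize the abstract HPPA iteration $(x^*_n)$ as a particular instance of the genVAM iteration and then to track how the single auxiliary constant $K_z$ specializes. First I would recall, as already observed just before \eqref{def-xnstar}, that the constant mapping $f^*:C\to C$, $f^*(x)=u$, is an $\alpha$-contraction with $\alpha=0$ (indeed $d(f^*(x),f^*(y))=0\le 0\cdot d(x,y)$), and that the iteration \eqref{def-xnstar} is exactly the genVAM iteration \eqref{def-genVAM} for the choice $f:=f^*$. Hence every hypothesis of the form ``$f$ is an $\alpha$-contraction with $\alpha\in[0,1)$'' occurring in Sections~\ref{genVAM-hyperbolic}--\ref{section-rates-as-reg} is met here with $\alpha=0$, while the quantitative hypotheses (H1$\alpha_n$)--(H4$\lambda_n$), (H4$\alpha_n$), the resolvent-like conditions $Res((\lambda_n),n,m)$, the completeness/closedness assumptions and the condition $(\alpha_n)\subseteq(0,1]$ concern only the data $(\alpha_n)$, $(\lambda_n)$, $(T_n)$, $X$, $C$ and not $f$; so they are automatically inherited.

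Next I would verify that $K^*_z$ fixed by \eqref{def-Kzs-main} is a legitimate value of the constant $K_z$ of \eqref{def-Kz-main} for this instance: with $\alpha=0$, $x_0=x^*_0$ and $f=f^*$ (so that $f(z)=u$), the requirement $K_z\ge\max\{d(x_0,z),\tfrac{d(f(z),z)}{1-\alpha}\}$ becomes $K_z\ge\max\{d(x^*_0,z),d(u,z)\}$, which is precisely \eqref{def-Kzs-main}, and $K^*_z\in\N^*$ as needed. Likewise, the auxiliary sequence $(y^*_n)$ of \eqref{eq-def-yns} is exactly the sequence $(y_n)$ of \eqref{eq-def-yn} for $f=f^*$, $\alpha=0$, so that Lemma~\ref{bound-dynsz-dynsu} and Proposition~\ref{prop-rate-conv-dxnsyns-abstractHPPA} are simply the specializations of Lemma~\ref{yn-bounds-hyp} and Proposition~\ref{prop-rate-conv-dxnyn}.

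With these identifications in place, the proof reduces to instantiating the earlier results: Theorem~\ref{main-theorem-1-quant-as-reg} (together with Remarks~\ref{main-theorem-remark-1} and \ref{main-theorem-remark-2}) gives a rate of asymptotic regularity for $(x^*_n)$; Propositions~\ref{prop-quant-(Tn)-as-reg} and \ref{prop-quant-Tm-as-reg} give rates of $(T_n)$- and of $T_m$-asymptotic regularity for every $m\in\N$; and Theorem~\ref{main-theorem-2-quant-as-reg} gives the alternative rates obtained through the auxiliary sequence $(y_n)$. In each of the resulting formulas one merely substitutes $\alpha:=0$ and $K_z:=K^*_z$. I do not expect any genuine obstacle: the entire content of the corollary is the bookkeeping remark that the rates established above depend on $f$ only via $\alpha$, so setting $\alpha=0$ and replacing $K_z$ by $K^*_z$ throughout yields the corresponding rates for the abstract HPPA; the only point deserving a line of care is checking, as above, that $K^*_z$ indeed satisfies \eqref{def-Kz-main} with $\alpha=0$.
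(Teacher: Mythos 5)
Your proposal is correct and follows essentially the same route the paper takes: the abstract HPPA is the genVAM instance with the constant map $f^*(x)=u$, a $0$-contraction, and since $f(z)=u$ and $\alpha=0$, condition \eqref{def-Kz-main} reduces exactly to \eqref{def-Kzs-main}, so $K^*_z$ is a valid choice of $K_z$ and the rates specialize by direct substitution. Nothing further is needed.
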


Let $X$ be a normed space, $A:X\to 2^X$ an $m$-accretive operator with a nonempty set of zeros, 
and $f:X\to X$ an $\alpha$-contraction.
The VAM iteration, introduced by Takahashi \cite{Tak07} and studied more recently by Xu et al. \cite{XuAltAlzChe22}, is defined by:
\begin{equation}\label{def-VAM}
 x_0\in X, \qquad x_{n+1}=\alpha_n f(x_n) +(1-\alpha_n)J_{\lambda_n}^Ax_n,
\end{equation}
where  $(\lambda_n) \subseteq (0,\infty)$,  $(\alpha_n) \subseteq [0,1]$ and  $J_{\lambda_n}^A$ is the resolvent of 
order $\lambda_n$ of $A$. By letting $C:=X$ and $T_n:=J_{\lambda_n}^A$ in \eqref{def-genVAM}, we get that the VAM iteration 
is a special case of the genVAM iteration. Furthermore, the family $(T_n)$ satisfies $Res((\lambda_n),n,m)$ for all $m,n\in\N$ 
(see Subsection \ref{subsection-resolvent-like}). 
The authors obtained in \cite[Subsection 5.1]{FirLeu25} quantitative asymptotic regularity results for the VAM iteration. 
These results are immediate consequences of Theorem \ref{main-theorem-1-quant-as-reg} and 
Propositions \ref{prop-quant-(Tn)-as-reg}, \ref{prop-quant-Tm-as-reg}. Furthermore, as corollaries of 
Theorem \ref{main-theorem-2-quant-as-reg} and Proposition \ref{main-prop-2-as-reg}, we obtain 
new quantitative and qualitative results for the VAM iteration. 

\subsection{An example with linear rates}

Take for all $n\in \N$,
\[\alpha_n=\frac{2}{(1-\alpha)(n+J)}, \quad \lambda_n=\frac{n+J}{n+J-1}, 
\quad \text{where }J=2\left\lceil\frac{1}{1-\alpha}\right\rceil.
\]

This choice of parameter sequences was considered by the authors in \cite[Section 2.3]{FirLeu25}
to get linear rates of asymptotic regularity for the VAMe iteration. We show in the sequel that 
we can obtain linear rates for the genVAM iteration by a simple adaptation to our setting of the proofs from \cite[Section 2.3]{FirLeu25}. 

\begin{lemma}\label{lem-as-reg-ex1}
For all $n\in \N$, 
\begin{align}
d(x_{n+1},x_n) & \leq \frac{3JK_z}{(1-\alpha)(n+J)}, \label{linear-ineq-VAM-example-0}\\
d(x_n, T_nx_n) & \leq \frac{(3J+4)K_z}{(1-\alpha)(n+J)}. \label{linear-ineq-VAM-example-1}
\end{align}
\end{lemma}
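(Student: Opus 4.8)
The plan is to apply the quantitative recursion machinery already developed, specialized to the concrete parameter sequences. First I would record the relevant exact values: with $\alpha_n=\frac{2}{(1-\alpha)(n+J)}$ and $\lambda_n=\frac{n+J}{n+J-1}$, one computes
\[
(1-\alpha)\alpha_{n+1}=\frac{2}{n+J+1},\qquad
1-\frac{\lambda_{n+1}}{\lambda_n}=1-\frac{(n+J+1)(n+J-1)}{(n+J)^2}=\frac{1}{(n+J)^2},
\]
and
\[
|\alpha_{n+1}-\alpha_n|=\frac{2}{1-\alpha}\left(\frac{1}{n+J}-\frac{1}{n+J+1}\right)=\frac{2}{(1-\alpha)(n+J)(n+J+1)}.
\]
Plugging these into \eqref{main-ineq-apply-Xu-hyp} gives, with $\beta_n=1-(1-\alpha)\alpha_{n+1}=1-\frac{2}{n+J+1}$, a recursion of the shape
\[
d(x_{n+2},x_{n+1})\le\Bigl(1-\frac{2}{n+J+1}\Bigr)d(x_{n+1},x_n)+\frac{C\,K_z}{(n+J)^2}
\]
for a small explicit constant $C$ (here one uses $1-\alpha_{n+1}\le 1$ and crude bounds like $n+J+1\le 2(n+J)$, $(n+J)^2\le (n+J)(n+J+1)$ to collect everything over $(n+J)^2$; being generous with the constant is fine as long as the final bound $\frac{3JK_z}{(1-\alpha)(n+J)}$ absorbs it, which it does because $J\ge 2$ and $1-\alpha\le 1$).

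The second step is to massage this into the exact hypothesis of Lemma~\ref{lem:sabach-shtern-v2}. The natural choice is $N=2$, $\gamma=1-\alpha$ — wait, more carefully: set $\gamma=1$ is not right since $\alpha_n$ carries the $\frac{1}{1-\alpha}$ factor, so instead take in that lemma $a_n=\frac{N}{\gamma(n+J)}$ with $N=2$, $\gamma=1-\alpha$... but then $\gamma a_{n+1}=(1-\alpha)a_{n+1}=\frac{2}{n+J+1}$ matches $1-\beta_n$, as required. One then needs to write the error term $\frac{C K_z}{(n+J)^2}$ in the form $(a_n-a_{n+1})c_n$; since $a_n-a_{n+1}=\frac{2}{(1-\alpha)}\bigl(\frac1{n+J}-\frac1{n+J+1}\bigr)=\frac{2}{(1-\alpha)(n+J)(n+J+1)}\ge\frac{1}{(1-\alpha)(n+J)^2}$, one can absorb $\frac{C K_z}{(n+J)^2}$ by choosing $c_n$ a suitable constant bounded by some $L$ proportional to $K_z$. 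Then Lemma~\ref{lem:sabach-shtern-v2} with $s_n=d(x_{n+1},x_n)$, $s_0=d(x_1,x_0)\le 2K_z$, $L$ a common bound on $c_n$ and $s_0$, and $J\ge N=2$, yields $d(x_{n+1},x_n)\le\frac{JL}{(1-\alpha)(n+J)}$; checking that $L$ can be taken small enough that $JL\le 3JK_z$ gives \eqref{linear-ineq-VAM-example-0}. (Alternatively, the whole first inequality could be derived directly by induction on $n$ from \eqref{main-ineq-apply-Xu-hyp}, verifying the telescoping inequality $\beta_n\cdot\frac{3JK_z}{(1-\alpha)(n+J)}+\frac{CK_z}{(n+J)^2}\le\frac{3JK_z}{(1-\alpha)(n+J+1)}$; this is the more self-contained route and avoids fussing with the exact algebraic form \eqref{eq:sabach-shtern-main-ineq}.)

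For \eqref{linear-ineq-VAM-example-1}, I would simply invoke \eqref{dxnTnxn-leq}: $d(x_n,T_nx_n)\le d(x_n,x_{n+1})+2K_z\alpha_n$, then substitute \eqref{linear-ineq-VAM-example-0} and $\alpha_n=\frac{2}{(1-\alpha)(n+J)}$ to get
\[
d(x_n,T_nx_n)\le\frac{3JK_z}{(1-\alpha)(n+J)}+\frac{4K_z}{(1-\alpha)(n+J)}=\frac{(3J+4)K_z}{(1-\alpha)(n+J)}.
\]
The main obstacle is the bookkeeping in the first step: getting the error term from \eqref{main-ineq-apply-Xu-hyp} into exactly the normalized form required by Lemma~\ref{lem:sabach-shtern-v2} (or equivalently, verifying the telescoping inequality with the right constant), since one must be careful that the single multiplicative constant in front of $K_z/(n+J)$ in the final bound really does dominate the sum of the two contributions $|\alpha_{n+1}-\alpha_n|$ and $|1-\lambda_{n+1}/\lambda_n|$ after they are both put over a common denominator; the role of $J\ge 2\lceil\frac1{1-\alpha}\rceil$ is precisely to make this work, and it should be highlighted where it is used.
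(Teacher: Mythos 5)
Your overall route is exactly the paper's: specialize \eqref{main-ineq-apply-Xu-hyp} to these parameters, feed the resulting recursion into Lemma~\ref{lem:sabach-shtern-v2} with $N=2$, $\gamma=1-\alpha$, $s_n=d(x_{n+1},x_n)$, and then obtain \eqref{linear-ineq-VAM-example-1} from \eqref{dxnTnxn-leq}; your treatment of the second inequality is verbatim correct. The gap is in the constant bookkeeping of the first step, which you flag but then dismiss with an argument that does not work. With your crude bounds ($1-\alpha_{n+1}\le 1$, putting both contributions over $(n+J)^2$) the error term of \eqref{main-ineq-apply-Xu-hyp} is only bounded by roughly $\frac{6K_z}{(1-\alpha)(n+J)^2}$, which, rewritten as $(\alpha_n-\alpha_{n+1})c_n$, forces $c_n$ up to about $\tfrac92 K_z$ rather than $3K_z$. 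Your claim that the excess is ``absorbed because $J\ge 2$ and $1-\alpha\le 1$'' is false: the conclusion of Lemma~\ref{lem:sabach-shtern-v2} is $s_n\le \frac{JL}{\gamma(n+J)}$, linear in $L$, so $L=\tfrac92K_z$ yields only $\frac{9JK_z}{2(1-\alpha)(n+J)}$, strictly weaker than \eqref{linear-ineq-VAM-example-0}; there is no slack anywhere to recover the constant $3$. The same problem kills your alternative induction: with error $\frac{CK_z}{(n+J)^2}$, $C=\frac{6}{1-\alpha}$, the telescoping inequality amounts to $2(n+J+1)\le J(n+J)$, which already fails for $J=2$ (i.e.\ for $\alpha\le\tfrac12$) at $n=0$.

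The missing (and needed) observation is to keep the product $(n+J)(n+J+1)$ coming from $\alpha_n-\alpha_{n+1}=\frac{2}{(1-\alpha)(n+J)(n+J+1)}$ and to use the size of $\alpha_{n+1}$ rather than $1-\alpha_{n+1}\le 1$: since $\alpha_{n+1}=\frac{2}{(1-\alpha)(n+J+1)}\ge\frac{1}{n+J+1}$, one has
\begin{equation*}
2K_z(1-\alpha_{n+1})\left|1-\frac{\lambda_{n+1}}{\lambda_n}\right|
=\frac{2K_z(1-\alpha_{n+1})}{(n+J)^2}
\le \frac{2K_z}{(n+J)(n+J+1)}
\le \frac{2K_z}{(1-\alpha)(n+J)(n+J+1)}
= K_z(\alpha_n-\alpha_{n+1}),
\end{equation*}
so the total error in \eqref{main-ineq-apply-Xu-hyp} is $\le 3K_z(\alpha_n-\alpha_{n+1})$ and the recursion is exactly of the form \eqref{eq:sabach-shtern-main-ineq} with $c_n=L=3K_z$ (and $s_0\le 2K_z\le L$), whence Lemma~\ref{lem:sabach-shtern-v2} gives precisely \eqref{linear-ineq-VAM-example-0}. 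Note also that the specific value $J=2\lceil\frac1{1-\alpha}\rceil$ is not what makes this absorption work (it holds for any $J\ge 2$); its role is to guarantee $\alpha_n\in(0,1]$ and $J\ge N=2$.
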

\begin{proof}
Use \eqref{main-ineq-apply-Xu-hyp}  and follow the proof of \cite[Proposition 5.9]{FirLeu25}(with $e^*=0$) 
to get that for all $n\in\N$,
\begin{align*}
d(x_{n+2},x_{n+1}) & \leq (1-(1-\alpha)\alpha_{n+1})d(x_{n+1},x_n) +  3K_z(\alpha_n-\alpha_{n+1}). 
\end{align*}
Apply Lemma~\ref{lem:sabach-shtern-v2} with $s_n= d(x_{n+1},x_n)$, $a_n=\alpha_n$, $c_n=L=3K_z$,  
$N=2$, $\gamma=1-\alpha$ and $J$ as above to get \eqref{linear-ineq-VAM-example-0}.

Furthermore, for all $n\in \N$,
\begin{align*}
d(x_n, T_nx_n) & \stackrel{\eqref{dxnTnxn-leq}}{\leq} d(x_n, x_{n+1}) + 2K_z\alpha_n 
\stackrel{\eqref{linear-ineq-VAM-example-0}}{\leq} \frac{3JK_z}{(1-\alpha)(n+J)} + \frac{4K_z}{(1-\alpha)(n+J)} 
= \frac{(3J+4)K_z}{(1-\alpha)(n+J)}.
\end{align*}
\end{proof}

\begin{proposition}\label{linear-rates-as-reg-all-ex1}
Define  $\Phi_0, \Psi_0, \Theta_0:\N \to \N$ as follows:
\[\Phi_0(k)=J_0K_z(k+1)-J, \quad \Psi_0(k)=(J_0+2J)K_z(k+1)-J, \quad \Theta_0(k)=(2J_0+4J)K_z(k+1)-J, \]
where  $\displaystyle J_0=\frac{3J^2}{2}=6\left\lceil\frac{1}{1-\alpha}\right\rceil^2$.

Then $\Phi_0$ is a rate of asymptotic regularity of $(x_n)$, $\Psi_0$ is a rate of 
$(T_n)$-asymptotic regularity of $(x_n)$ and 
$\Theta_0$ is a rate of $T_m$-asymptotic regularity of $(x_n)$ for every $m\in\N$.
\end{proposition}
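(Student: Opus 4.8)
The plan is to substitute the two linear estimates of Lemma~\ref{lem-as-reg-ex1} into the three rate definitions, using throughout the elementary fact that the choice $J = 2\lceil \frac{1}{1-\alpha}\rceil$ forces $\frac{1}{1-\alpha} \le \frac{J}{2}$. This is precisely what allows every occurrence of $\frac{1}{1-\alpha}$ in those estimates to be absorbed into the constant $J_0 = \frac{3J^2}{2}$. I would also first record the trivial facts that $J$ is even, so $J_0 \in \N$, and that $J_0 \ge J$, so $\Phi_0, \Psi_0, \Theta_0$ indeed take values in $\N$.

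For the first claim: by \eqref{linear-ineq-VAM-example-0} together with $\frac{1}{1-\alpha}\le\frac{J}{2}$,
\[
d(x_{n+1},x_n) \le \frac{3JK_z}{(1-\alpha)(n+J)} \le \frac{3J^2K_z}{2(n+J)} = \frac{J_0K_z}{n+J},
\]
and for $n \ge \Phi_0(k)$ one has $n+J \ge J_0K_z(k+1)$, whence $d(x_{n+1},x_n)\le\frac{1}{k+1}$. The very same computation applied to \eqref{linear-ineq-VAM-example-1} gives $d(x_n,T_nx_n) \le \frac{(3J+4)JK_z}{2(n+J)} = \frac{(J_0+2J)K_z}{n+J} \le \frac{1}{k+1}$ for all $n \ge \Psi_0(k)$, which settles the second claim.

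For the third claim I would fix $m\in\N$ and combine the previous bound with the resolvent-like inequality \eqref{res-lambda-m-n-main}. Here $\lambda_n = 1+\frac{1}{n+J-1}$ is decreasing in $n$ with $\lambda_0 = \frac{J}{J-1}\le 2$ and $\lambda_n>1$, so $\lambda_n,\lambda_m\in(1,2]$ for every $n$; since then $0<\lambda_m \le 2 < 2\lambda_n$, one gets $\left|1-\frac{\lambda_m}{\lambda_n}\right|<1$ uniformly in $n$ and in $m$. Hence \eqref{res-lambda-m-n-main} yields $d(T_mx_n,x_n)\le 2\,d(T_nx_n,x_n)$, and combining this with the estimate for $d(T_nx_n,x_n)$ obtained above produces $d(T_mx_n,x_n)\le\frac{(2J_0+4J)K_z}{n+J}\le\frac{1}{k+1}$ whenever $n\ge\Theta_0(k)$. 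Equivalently, one may simply invoke Proposition~\ref{prop-quant-Tm-as-reg} with $\Lambda=1$, $N_\Lambda=0$, $\Lambda_m=2$ and $\Psi=\Psi_0$, noting that then $\Psi_m(k)=\Psi_0(2k+1)=\Theta_0(k)$.

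I do not expect any genuine obstacle: once Lemma~\ref{lem-as-reg-ex1} is available, the whole argument is bookkeeping. The one point deserving a moment's care is verifying that the resolvent factor $\left|1-\frac{\lambda_m}{\lambda_n}\right|$ is bounded by $1$ for all $n$ and all $m$ simultaneously — this is exactly what makes $\Theta_0$ independent of $m$ — and it rests on the monotonicity of $(\lambda_n)$ together with $\lambda_0\le 2$.
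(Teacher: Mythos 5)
Your proposal is correct and follows essentially the paper's own argument: the first two rates are read off from Lemma \ref{lem-as-reg-ex1} by the same arithmetic (absorbing $\frac{1}{1-\alpha}\le\frac{J}{2}$ into $J_0$), and for the $T_m$-rate your fallback of invoking Proposition \ref{prop-quant-Tm-as-reg} with $\Lambda=1$, $N_\Lambda=0$, $\Lambda_m=2$ and $\Psi=\Psi_0$ is exactly what the paper does. Your direct bound $\left|1-\frac{\lambda_m}{\lambda_n}\right|<1$ via \eqref{res-lambda-m-n-main} is a harmless local shortcut that yields the same factor $2$.
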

\begin{proof}
We have that for all $n\geq \Phi_0(k)$, 
\begin{align*}
d(x_{n+1},x_n) & \stackrel{\eqref{linear-ineq-VAM-example-0}}{\leq}
\frac{3JK_z}{(1-\alpha)(n+J)} \leq \frac{3JK_z}{(1-\alpha)(\Phi_0(k)+J)}
 = \frac{2}{J(1-\alpha)} \cdot \frac1{k+1}\leq \frac1{k+1}.
\end{align*}
Furthermore, for all $n\geq \Psi_0(k)$, 
\begin{align*}
d(x_n,T_nx_n) & \stackrel{\eqref{linear-ineq-VAM-example-1}}{\leq}\frac{(3J+4)K_z}{(1-\alpha)(n+J)} 
\leq \frac{(3J+4)K_z}{(1-\alpha)(\Psi_0(k)+J)} = \frac{2}{J(1-\alpha)} \cdot \frac1{k+1} \leq \frac1{k+1}.
\end{align*}
Let $m\in\N$. Since (H2$\lambda_n$) holds with $\Lambda=1$, 
$N_\Lambda=0$, and $\lambda_m \leq 2$, we apply Proposition \ref{prop-quant-Tm-as-reg} with 
$\Psi=\Psi_0$ and 
$\Lambda_m=2$ to obtain that $(x_n)$ is $T_m$-asymptotically regular with rate 
$\Theta_0(k)=\Psi_0(2k+1)=(2J_0+4J)K_z(k+1)-J$.
\end{proof}

Let us consider the abstract HPPA $(x^*_n)$, defined by \eqref{def-xnstar}, a particular case of the 
genVAM iteration, obtained by letting $f(x)=u$ and $\alpha=0$. 
Then $J=2$, $\alpha_n=\frac{2}{n+2}$, $\lambda_n=\frac{n+2}{n+1}$, hence we get the example 
considered in \cite[Proposition 3.9]{CheLeu25}. Let 
\[\Phi^*_0(k)=6K_z^*(k+1)-2, \quad \Psi_0(k)=10K_z^*(k+1)-2, \quad \Theta^*_0(k)=20K_z^*(k+1)-2, \]
where $K_z^*\in \N^*$ satisfies \eqref{def-Kzs-main}.
As an immediate consequence of Proposition \ref{linear-rates-as-reg-all-ex1}, we obtain that  $(x^*_n)$ is asymptotically 
regular with rate $\Phi^*_0$, $(T_n)$-asymptotically regular  with rate $\Psi^*_0$, 
and, for all $m\in\N$, $T_m$-asymptotically regular with rate $\Theta^*_0$. Thus, we obtain identical 
rates with the ones from \cite[Proposition 3.10]{CheLeu25}.

\section{Rates of metastability and strong convergence}\label{rates-meta-thms}

In this section, we compute uniform rates of metastability for the abstract HPPA (Theorem \ref{abstract-HPPP-main-thm-quant}) 
and for the genVAM iteration (Theorem \ref{meta-GENVAM-bounded}). As a consequence of these quantitative results, 
we obtain a strong convergence result for the genVAM iteration (Theorem \ref{abstract-genVAM-main-thm-qual}).

In the sequel,  $X$ is a complete CAT(0) space, $\emptyset \neq C \subseteq X$ is closed and convex, $(T_n:C\to C)$ is a 
sequence of nonexpansive mappings with $F:=\bigcap_{n\in\N} Fix(T_n) \neq \emptyset$, 
$f:C\to C$ is an $\alpha$-contraction with $\alpha\in[0,1)$, and $(\alpha_n) \subseteq (0,1]$. 
We assume, moreover, that $(\lambda_n) \subseteq (0,\infty)$ is such that (H4$\lambda_n$) holds and that 
$(T_n)$ satisfies $Res((\lambda_n),n,m)$ for all $m,n\in\N$. 

Furthermore, $(x_n)$ is the genVAM iteration defined by \eqref{def-genVAM}, 
$(x^*_n)$ is the abstract HPPA defined by \eqref{def-xnstar}, 
 $z\in F$, 
$K_z \in\N^*$ satisfies \eqref{def-Kz-main}, and $K^*_z \in\N^*$ satisfies \eqref{def-Kzs-main}. Thus, 
\begin{align*}
K_z \geq \max\left\{d(x_0,z),\frac{d(f(z),z)}{1-\alpha}\right\}, \qquad 
K^*_z  \geq \max\left\{d(x^*_0,z), d(u,z)\right\}.
\end{align*}

The following result, obtained in \cite{KohLeu12a} by adapting to CAT(0) spaces 
Kohlenbach's \cite{Koh11} quantitative version of a well-known theorem of Browder \cite{Bro67}, is an 
essential ingredient for the proof of our first main theorem. This result is a quantitative version of Kirk's \cite{Kir03} 
generalization from Hilbert spaces to CAT(0) spaces of Browder's theorem. 

\begin{proposition} \label{Browder-abstractHPPA-CAT(0)}\cite{KohLeu12a}
Let $(y^*_n)$ be given by \eqref{eq-def-yns} and $M\in \N^*$ satisfy
$M\geq d(y^*_n,u)$ for all $n\in \N$.
\begin{enumerate}
\item If  $(\alpha_n)$ is  nonincreasing, then $(y^*_n)$ is Cauchy with rate of metastability 
$\Omega^*$ given by
\begin{equation}\label{def-Omega-meta-yns-1}
\Omega^*(k,g)=\tilde{g}^{\left(M^2(k+1)^2\right)}(0).
\end{equation}
\item If (H3$\alpha_n$) holds, then $(y^*_n)$ is Cauchy with rate of metastability 
$\Omega^*$ given by
\begin{equation}\label{def-Omega-meta-yns-2}
\Omega^*(k,g)=\sigma_3^+\left(g_h^{\left(4M^2(k+1)^2\right)}(0)\right),
\end{equation}
where $h:\N\to \N$ is such that $\alpha_n\geq \frac1{h(n)+1}$ for all $n\in \N$, and
\[g_{h}:\N\to \N, \quad g_{h}(n)=\max\left\{ h(i)\mid  i \in [0;\tilde{g}(\sigma_3(n))]\right\}.\]
\end{enumerate}
If $C$ is bounded with diameter $d_C$, then one can take $M\geq d_C$. Otherwise, one can take $M=3K^*_z$.
\end{proposition}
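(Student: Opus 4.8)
The plan is to derive this from the CAT(0) quantitative version of Browder's theorem established in \cite{KohLeu12a}, after identifying $(y^*_n)$ as the relevant net. Recall that $\tilde{T}:C\to C$ is nonexpansive, that $Fix(\tilde{T})=F\neq\emptyset$ by Proposition \ref{F=FixtT}, and that, since $X$ is complete and $C$ closed and convex, for each $n$ the map $x\mapsto \alpha_n u+(1-\alpha_n)\tilde{T}x$ is a $(1-\alpha_n)$-contraction whose unique fixed point is $y^*_n$ (cf.\ \eqref{eq-def-yns}); thus $(y^*_n)$ is precisely the Browder-type sequence anchored at $u$ associated with the nonexpansive mapping $\tilde{T}$, to which \cite{KohLeu12a} applies directly. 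The parameter $M$ with $M\geq d(y^*_n,u)$ for all $n$ is supplied by Lemma \ref{bound-dynsz-dynsu} in general (giving $M=3K^*_z$), and trivially by any $M\geq d_C$ when $C$ is bounded. It then remains only to recall why the two rates come out as stated.

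The core is a single quadratic inequality obtained from the CAT(0) inequality applied to the convex combinations $y^*_n=\alpha_n u+(1-\alpha_n)\tilde{T}y^*_n$ and $y^*_m=\alpha_m u+(1-\alpha_m)\tilde{T}y^*_m$, which yields, for all $n,m$ with $\alpha_n\geq\alpha_m$,
\[
d(y^*_n,y^*_m)^2\;\leq\; d(u,y^*_m)^2-d(u,y^*_n)^2.
\]
When $(\alpha_n)$ is nonincreasing this says that the ``potential'' $V(n):=d(u,y^*_n)^2$ is nondecreasing, lies in $[0,M^2]$, and dominates $d(y^*_i,y^*_j)^2$ through its oscillation over $[i;j]$. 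Given $k\in\N$ and $g:\N\to\N$, set $p_0=0$ and $p_{i+1}=\tilde{g}(p_i)=p_i+g(p_i)$. Since $\sum_{i=0}^{L-1}\bigl(V(p_{i+1})-V(p_i)\bigr)=V(p_L)-V(p_0)\leq M^2$, choosing $L=M^2(k+1)^2$ forces some $i<L$ with $V(p_{i+1})-V(p_i)\leq\frac1{(k+1)^2}$; then $N:=p_i\leq \tilde{g}^{(L)}(0)=\Omega^*(k,g)$ and, by monotonicity of $V$, $d(y^*_a,y^*_b)\leq\frac1{k+1}$ for all $a,b\in[N;N+g(N)]=[p_i;p_{i+1}]$, which is exactly \eqref{def-rate-metastability}. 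This proves (i).

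For (ii) the one real obstacle is that, without monotonicity of $(\alpha_n)$, the potential $V$ need not be monotone and the core inequality must be carried in a less clean form with a correction term controlled by the ratios $\alpha_m/\alpha_n$. Following \cite{KohLeu12a}, this is handled by using the rate $\sigma_3$ of $\alpha_n\to 0$ together with the modulus $h$ (so that $\alpha_n\geq\frac1{h(n)+1}$): beyond $\sigma_3(\cdot)$ the parameters are small enough to absorb the correction term, so the same pigeonhole runs with $\tilde{g}$ replaced by the composite $g_h$, with the final index wrapped in the monotone majorant $\sigma_3^+$, and with the iteration count inflated to $4M^2(k+1)^2$ (the extra factor $4$ coming from passing to the non-squared quantity via an $\varepsilon/2$-split). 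This produces $\Omega^*(k,g)=\sigma_3^+\bigl(g_h^{(4M^2(k+1)^2)}(0)\bigr)$. The delicate bookkeeping in this last step --- combining the $\sigma_3$-threshold with the iteration and verifying the block inclusion --- is the part requiring the most care; everything else is the telescoping argument of (i).
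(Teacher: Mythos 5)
Your proposal takes essentially the same route as the paper: both reduce the statement to the quantitative Browder-type theorem of Kohlenbach and Leu\c{s}tean (\cite[Proposition 9.3, Remark 9.4(i)]{KohLeu12a}) applied to the nonexpansive map $\tilde{T}$ with anchor $u$, with the boundedness of $C$ replaced by the bound $M\geq d(y^*_n,u)$ supplied by Lemma \ref{bound-dynsz-dynsu} (giving $M=3K^*_z$), exactly as the paper does via the remark in \cite{Koh11}. Your added potential-function/pigeonhole sketch of the cited result's internals (and the looser outline for part (ii)) is supplementary detail the paper omits, and it is consistent with the stated rates.
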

\begin{proof}
Remark that $(y^*_n)$ is $(z^u_{t_n})$ from \cite[Section 9]{KohLeu12a} with $t_n=\alpha_n$ and $T=\tilde{T}$.
In \cite[Proposition 9.3]{KohLeu12a}, $C$ is assumed to be bounded and $M\in \N^*$ is an upper bound on 
$d_C$. However, as remarked 
in \cite[Theorem 4.2 ]{Koh11}, instead of assuming that $C$ is bounded, it suffices to assume that
$(y^*_n)$ is bounded and that $M\in \N^*$ satisfies $M\geq d(y^*_n,u)$ for all $n\in \N$. 
By Lemma \ref{bound-dynsz-dynsu}, we can take $M=3K^*_z$.
Apply \cite[Proposition 9.3]{KohLeu12a} and \cite[Remark 9.4.(i)]{KohLeu12a} with 
$\varepsilon=\frac{1}{k+1}$ and $z^u_{t_n}=y^*_n$
to get the result.
\end{proof}

Our first main theorem computes rates of metastability for the abstract HPPA $(x^*_n)$. 
The method of proof extends to this abstract setting the one used by the second author and Pinto in 
\cite{LeuPin21} for the HPPA for maximally monotone operators in Hilbert spaces.

\begin{theorem}\label{abstract-HPPP-main-thm-quant}
Assume that (H1$\alpha_n$) and (H4$\alpha_n$) hold, and in addition that either
$(\alpha_n)$ is nonincreasing or (H3$\alpha_n$) holds.

Then $(x^*_n)$ is Cauchy  with rate of metastability $\Phi^*$ defined by
\begin{equation}\label{def-Phi*-abstract-HPPA}
\Phi^*(k,g)=\max\{\Sigma^*(3k+2),\Omega^*(3k+2,h_{k,g})\},
\end{equation}
where $\Omega^*$ is as in Proposition \ref{Browder-abstractHPPA-CAT(0)},
\begin{align}
\Sigma^*:\N\to \N, & \quad  
\Sigma^*(k) = \sigma_1\left(\psi^*(2k+1)+\lceil \ln(4K^*_z(k+1))\rceil\right) + 1, \label{def-Sigmastar} \\
\psi^*:\N\to \N, & \quad \psi^*(k) = 
\max\left\{\theta_4\left(\left\lceil \frac{4K^*_z(k+1)}{\lambda}\right\rceil-1\right), \, 
\sigma_4\left(4K^*_z(k+1)-1\right)\right\}, \label{def-psistar}
\end{align}
and, for $k\in \N$ and $g:\N \to \N$, 
\begin{equation}\label{def-hkg}
h_{k,g}:\N\to \N, \quad h_{k,g}(m)= \max\{\Sigma^*(3k+2),m\}-m+g(\max\{\Sigma^*(3k+2),m\}).
\end{equation}
\end{theorem}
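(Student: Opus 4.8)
The plan is to combine the two quantitative ingredients already in hand: the rate of convergence $\Sigma^*$ of $d(x_n^*,y_n^*)$ to $0$ from Proposition \ref{prop-rate-conv-dxnsyns-abstractHPPA} (which is precisely the $\Sigma^*$ recorded in \eqref{def-Sigmastar}--\eqref{def-psistar}), and the rate of metastability $\Omega^*$ of the Browder-type auxiliary sequence $(y_n^*)$ from Proposition \ref{Browder-abstractHPPA-CAT(0)}. The abstract HPPA $(x_n^*)$ is the genVAM iteration with $f=f^*$ constant equal to $u$ and $\alpha=0$, and $(y_n^*)$ is the corresponding auxiliary sequence \eqref{eq-def-yns}. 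So $(x_n^*)$ is a sequence that converges to the same limit as $(y_n^*)$ with the explicit rate $d(x_n^*,y_n^*)\to 0$; transferring metastability from $(y_n^*)$ to $(x_n^*)$ is exactly the situation handled by Proposition \ref{xnyn-lemma-meta}.

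Concretely, I would first check that all hypotheses of Proposition \ref{prop-rate-conv-dxnsyns-abstractHPPA} hold: (H1$\alpha_n$) and (H4$\alpha_n$) are assumed, and (H4$\lambda_n$) together with $Res((\lambda_n),n,m)$ for all $m,n$ and the closedness/completeness of $C$ are the standing assumptions of this section. Hence $\lim_n d(x_n^*,y_n^*)=0$ with rate of convergence $\Sigma^*$ as in \eqref{def-Sigmastar}. Next, Proposition \ref{Browder-abstractHPPA-CAT(0)} applies: since either $(\alpha_n)$ is nonincreasing (case (i)) or (H3$\alpha_n$) holds (case (ii)), $(y_n^*)$ is Cauchy with rate of metastability $\Omega^*$ given respectively by \eqref{def-Omega-meta-yns-1} or \eqref{def-Omega-meta-yns-2}, with $M=3K_z^*$ by Lemma \ref{bound-dynsz-dynsu} (or $M\geq d_C$ in the bounded case). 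Then I would invoke Proposition \ref{xnyn-lemma-meta} with $x_n:=y_n^*$, $y_n:=x_n^*$, $\Omega:=\Omega^*$ and $\varphi:=\Sigma^*$; its conclusion is that $(x_n^*)$ is Cauchy with rate of metastability $\Gamma(k,g)=\max\{\Sigma^*(3k+2),\Omega^*(3k+2,h_{k,g})\}$ where $h_{k,g}$ is the function displayed in Proposition \ref{xnyn-lemma-meta}, which coincides verbatim with \eqref{def-hkg}. This is exactly $\Phi^*(k,g)$ as stated, so the proof closes.

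The only genuinely non-routine point is matching the two rates with the correct argument shifts, i.e.\ confirming that the $h_{k,g}$ appearing in Proposition \ref{xnyn-lemma-meta} with $\varphi=\Sigma^*$ is literally \eqref{def-hkg} and that the outer $3k+2$ arguments line up; this is a bookkeeping check rather than a mathematical obstacle. One should also note that Proposition \ref{xnyn-lemma-meta} is stated for general metric spaces, and a complete CAT(0) space is in particular a metric space, so there is no difficulty there. In short, the proof is a two-line assembly: apply Proposition \ref{prop-rate-conv-dxnsyns-abstractHPPA} for $\Sigma^*$, apply Proposition \ref{Browder-abstractHPPA-CAT(0)} for $\Omega^*$, and feed both into Proposition \ref{xnyn-lemma-meta}.

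\begin{proof}[Proof sketch]
Since $(x_n^*)$ is the genVAM iteration with $f$ constant equal to $u$ and $\alpha=0$, and $(y_n^*)$ the associated auxiliary sequence, Proposition \ref{prop-rate-conv-dxnsyns-abstractHPPA} (whose hypotheses (H1$\alpha_n$), (H4$\alpha_n$), (H4$\lambda_n$) are in force) yields $\lim_{n\to\infty} d(x_n^*,y_n^*)=0$ with rate of convergence $\Sigma^*$ as in \eqref{def-Sigmastar}--\eqref{def-psistar}. On the other hand, under the stated alternative ($(\alpha_n)$ nonincreasing or (H3$\alpha_n$)), Proposition \ref{Browder-abstractHPPA-CAT(0)} gives that $(y_n^*)$ is Cauchy with rate of metastability $\Omega^*$. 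Applying Proposition \ref{xnyn-lemma-meta} with $x_n:=y_n^*$, $y_n:=x_n^*$, $\Omega:=\Omega^*$, $\varphi:=\Sigma^*$, we obtain that $(x_n^*)$ is Cauchy with rate of metastability
\[
\Gamma(k,g)=\max\{\Sigma^*(3k+2),\Omega^*(3k+2,h_{k,g})\},
\]
where $h_{k,g}$ is as in \eqref{def-hkg}. This is precisely $\Phi^*(k,g)$.
\end{proof}
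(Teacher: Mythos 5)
Your proposal is correct and follows exactly the paper's own argument: obtain $\Sigma^*$ from Proposition \ref{prop-rate-conv-dxnsyns-abstractHPPA}, obtain $\Omega^*$ from Proposition \ref{Browder-abstractHPPA-CAT(0)}, and combine them via Proposition \ref{xnyn-lemma-meta} (with the roles assigned as you indicate, the Cauchy sequence being $(y^*_n)$). The bookkeeping of the $3k+2$ arguments and of $h_{k,g}$ matches the paper's definitions, so nothing is missing.
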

\begin{proof}
We have that $\Sigma^*$ is a rate of convergence to $0$ of $(d(x^*_n,y^*_n))$ 
(by Proposition \ref{prop-rate-conv-dxnsyns-abstractHPPA}) and that $\Omega^*$ is a rate of 
metastability  of $(y^*_n)$ (by Proposition \ref{Browder-abstractHPPA-CAT(0)}). 
Apply now Proposition \ref{xnyn-lemma-meta} to obtain that $\Phi^*$ is a rate of metastability of 
$(x^*_n)$.
\end{proof}

Sipo\c s \cite[Theorem 4.10]{Sip22} computed for the first time rates of metastability for the abstract HPPA
associated to a family $(T_n)$ that is jointly $(P_2)$ with respect to $(\lambda_n)$, by adapting 
Kohlenbach's quantitative analysis \cite{Koh20a} of the HPPA for accretive operators in Banach spaces.
Theorem \ref{abstract-HPPP-main-thm-quant} extends Sipo\c s's result to families $(T_n)$ satisfying 
$Res((\lambda_n),n,m)$ for all $m,n\in\N$, with a considerably simpler proof and with simpler rates of 
metastability. Furthermore, as an immediate consequence of Theorem \ref{abstract-HPPP-main-thm-quant}, 
we get rates of metastability for the HPPA for maximally monotone operators in Hilbert spaces 
that are similar to, but better than, those obtained in \cite[Corollary 1.(i)]{LeuPin21}.

\begin{corollary}\label{abstract-HPPA-bounded-C}
Assume that $C$ is bounded and  $M\in\N^*$ is an upper bound on the diameter of $C$. 
Let $\Phi^*_M$ be obtained by replacing $K^*_z$ with $M$ in the definition of  $\Phi^*$. Then 
$(x^*_n)$ is  Cauchy  with rate of metastability $\Phi^*_M$.
\end{corollary}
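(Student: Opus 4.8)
The plan is to observe that, once $C$ is bounded, the integer $M$ is itself an admissible value for the constant $K^*_z$, so that Theorem~\ref{abstract-HPPP-main-thm-quant} applies verbatim after the substitution $K^*_z:=M$. The only additional ingredient needed is the last sentence of Proposition~\ref{Browder-abstractHPPA-CAT(0)}, which in the bounded case permits taking the auxiliary constant $M$ feeding $\Omega^*$ to be the diameter bound rather than $3K^*_z$.

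First I would check that $M$ satisfies \eqref{def-Kzs-main}. Since $x^*_0,u\in C$ and $z\in F\subseteq C$ (each $T_n$ maps $C$ into $C$), the distances $d(x^*_0,z)$ and $d(u,z)$ are both $\le M$, as their endpoints lie in $C$ and $M$ bounds the diameter of $C$; hence $M\geq\max\{d(x^*_0,z),d(u,z)\}$. Likewise, since $y^*_n,u\in C$ for all $n$ (the map $S_n$ defining $y^*_n$ is a self-map of $C$), one has $d(y^*_n,u)\le M$, so $M$ is a legitimate value for the parameter $M$ of Proposition~\ref{Browder-abstractHPPA-CAT(0)} — this is exactly the content of the closing remark of that proposition, which makes Lemma~\ref{bound-dynsz-dynsu} and the bound $3K^*_z$ superfluous here. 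With $K^*_z:=M$, the functions $\Sigma^*$ and $\psi^*$ of \eqref{def-Sigmastar}--\eqref{def-psistar}, the maps $h_{k,g}$ of \eqref{def-hkg}, and the instance of $\Omega^*$ occurring in \eqref{def-Phi*-abstract-HPPA} all turn into their $M$-versions; that is, $\Phi^*$ turns into $\Phi^*_M$.

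Then I would simply invoke Theorem~\ref{abstract-HPPP-main-thm-quant} — whose hypotheses, namely (H1$\alpha_n$), (H4$\alpha_n$) and either monotonicity of $(\alpha_n)$ or (H3$\alpha_n$), remain in force for the corollary — with the choice $K^*_z:=M$, to conclude that $(x^*_n)$ is Cauchy with rate of metastability $\Phi^*_M$. There is essentially no obstacle: the statement is a direct specialization. The only point demanding a little care is the uniform bookkeeping, namely that \emph{every} occurrence of $K^*_z$ in the definition of $\Phi^*$ — including the hidden one inside the bound $3K^*_z$ used to instantiate $\Omega^*$ in the proof of Theorem~\ref{abstract-HPPP-main-thm-quant} — gets consistently replaced by $M$, which is precisely what boundedness of $C$ together with the last sentence of Proposition~\ref{Browder-abstractHPPA-CAT(0)} licenses.
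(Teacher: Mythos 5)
Your proposal is correct and is exactly the argument the paper intends (the corollary is stated without proof as a direct specialization): since $x^*_0,u,z,y^*_n\in C$, the diameter bound $M$ is an admissible value both for $K^*_z$ in \eqref{def-Kzs-main} and for the constant $M$ of Proposition~\ref{Browder-abstractHPPA-CAT(0)} via its closing remark, so Theorem~\ref{abstract-HPPP-main-thm-quant} applies with $K^*_z:=M$, yielding $\Phi^*_M$.
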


Our second main theorem computes uniform rates of metastability for the genVAM iteration. Its proof uses 
Theorem \ref{abstract-HPPP-main-thm-quant} and methods developed by Kohlenbach and Pinto \cite{KohPin22}.

Following \cite{KohPin22}, for any $x\in C$, let $(w_n(x))$ denote the abstract HPPA $(x^*_n)$  
starting with $x$  and with $u=x$.  Thus,
\begin{equation}\label{def-wnx}
w_0(x) = x, \quad  w_{n+1}(x)=\alpha_nx+(1-\alpha_n)T_nw_n(x). 
\end{equation}

\begin{theorem}\label{meta-GENVAM-bounded}
Assume that $C$ is bounded and let $M\in \N^*$ be an upper bound on the diameter of $C$. 
Suppose that (H1$\alpha_n$)and (H4$\alpha_n$) hold, and in addition that either
$(\alpha_n)$ is nonincreasing or (H3$\alpha_n$) holds.

Then $(x_n)$ is Cauchy with rate of metastability $\Omega$ defined as follows:
\begin{equation}\label{def-Omega-meta-GENVAM-bounded}
\Omega(k,g)=\Psi\left(\frac1{k+1},\tilde{g},0\right),
\end{equation}
where $\Phi^*_M$ is given by Corollary \ref{abstract-HPPA-bounded-C},
\begin{align*}
\Psi:\R^+\times \N^\N\times \N \to \N, & \quad \Psi(\varepsilon,g,n) =\beta\left(\frac{\varepsilon}{3},\Psi_{\varepsilon,g,n}^{L+1}\right),\\
\Phi^+:\N\times \N^\N\times \N \to \N, & \quad \Phi^+(k,g,n) = n  + \Phi^*_M(k,g_n),\\
\Phi^\dag:\N\times \N^\N\times \N \to \N, & \quad \Phi^\dag(k,g,n) =\max\{\Phi^+(k,g,i) \mid i\in[0;n]\},\\
\tilde{\Phi}: \R^+ \times \N^\N\times \N \to \N, & \quad \tilde{\Phi}(\varepsilon, g, n)=\Phi^{\dag}\left(\left\lceil \frac1\varepsilon\right\rceil -1 ,g,n\right)\\
\sigma^*:\N \to \N, &\quad \sigma^*(n)=\sigma_1^+\left(\left\lceil\frac{n}{1-\alpha}\right\rceil\right), \\
\beta:\R^+\times \N \to \N, &\quad  \beta(\varepsilon,n)=\sigma^*\left(n+\left\lceil \ln\left(\frac{2M}{\varepsilon}\right) \right\rceil\right)+1,
\end{align*}
and for all $\varepsilon >0$, $n\in \N$, $g:\N\to \N$, 
\begin{align*} 
g_n:\N\to\N, & \quad g_n(m)=n+g(n+m), 
\end{align*}
\begin{align*} 
\tilde{\varepsilon}=\frac{\varepsilon(1-\alpha)}{9}, \, \varepsilon_0=\frac{\tilde{\varepsilon}(1-\alpha)}{2}, \,
L=\left\lceil\log_{\alpha}\left(\frac{\tilde{\varepsilon}}{2M}\right)\right\rceil\,
\end{align*}
$\begin{array}{l}
F_{\varepsilon,g}^{i}: \N \to \N  \text{ and } \Psi_{\varepsilon,g,n}^{i}\in \N  
\text{ are defined by induction on $i\in [0;L]$ as follows:}\\[2mm]
F_{\varepsilon,g}^{0}(q)=\max\left\{g\left(\beta\left(\frac{\varepsilon}{3},q\right)\right),\beta\left(\frac{\varepsilon}{3},q\right)\right\}, 
 \quad  F_{\varepsilon,g}^{i+1}(q)=\max\left\{F_{\varepsilon,g}^{0}(q), \tilde{\Phi}\left(\varepsilon_0,F_{\varepsilon,g}^{i},q \right)\right\},\\[1mm]
\Psi_{\varepsilon,g,n}^{0}=n, \quad \Psi_{\varepsilon,g,n}^{i+1}=\tilde{\Phi}\left(\varepsilon_0,F_{\varepsilon,g}^{L-i},\Psi_{\varepsilon,g,n}^{i}\right).
\end{array}$
\end{theorem}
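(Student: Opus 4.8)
The plan is to deduce the metastability of the genVAM iteration $(x_n)$ from the (uniform) metastability of the anchored abstract HPPA $(w_n(x))$ of \eqref{def-wnx}, following the quantitative translation of Kohlenbach and Pinto \cite{KohPin22}. The first step is to record the uniform metastability of the family $\{(w_n(x))\}_{x\in C}$: since $(w_n(x))$ is the instance of the abstract HPPA \eqref{def-xnstar} with starting point and anchor both equal to $x$, and $M$ bounds the diameter of $C$ (hence bounds $d(x^*_0,z)$, $d(u,z)$ and $d(w_n(x),u)$ simultaneously for every $x$), Corollary \ref{abstract-HPPA-bounded-C} gives that, for every $x\in C$, $(w_n(x))$ is Cauchy with rate of metastability $\Phi^*_M$, with $\Phi^*_M$ independent of $x$. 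From $\Phi^*_M$ I form, exactly as in Lemma \ref{rate-meta-H-Tn}, the shifted functional $\Phi^+$, its monotonisation $\Phi^\dag$ (monotone in the last argument), and the $\varepsilon$-indexed version $\tilde\Phi$, so that $\tilde\Phi(\varepsilon,g,n)$ returns, for \emph{every} $x\in C$, some $N\in[n;\tilde\Phi(\varepsilon,g,n)]$ with $d(w_i(x),w_j(x))\le\varepsilon$ for all $i,j\in[N;N+g(N)]$.

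The second step is a comparison recursion. Fix $m\in\N$ and compare $(x_n)$ with the abstract HPPA $(w_n(f(x_m)))$ anchored at $f(x_m)$: from (W4), the nonexpansiveness of the $T_n$ and the $\alpha$-contractivity of $f$,
\[
d\bigl(x_{n+1},w_{n+1}(f(x_m))\bigr)\ \le\ (1-\alpha_n)\,d\bigl(x_n,w_n(f(x_m))\bigr)+\alpha_n\alpha\, d(x_n,x_m)\qquad(n\in\N).
\]
If $d(x_j,x_m)\le\rho$ for all $j$ in the relevant window, unfolding this recursion over $[0,n]$ — quantitatively via $\prod_k(1-\alpha_k)\le e^{-\sum_k\alpha_k}$ together with the monotone rate of divergence of $\sum(1-\alpha)\alpha_n$ from Lemma \ref{H1alphan-prop}.\eqref{H1alphan-prop-1}, using the same telescoping technique as in Lemma \ref{quant-lem-Xu02-bncn} — yields $d(x_n,w_n(f(x_m)))\le\alpha\rho+\varepsilon_0$ once $n\ge\beta(\,\cdot\,,m)$: the initial term and the forcing contributions from before the window are bounded by a multiple of $M$ and are damped by $\prod_k(1-\alpha_k)$, which accounts for the summand $\lceil\ln(2M/\varepsilon)\rceil$ in $\beta$, while the shift $m$ in $\beta(\varepsilon,m)=\sigma^*\!\bigl(m+\lceil\ln(2M/\varepsilon)\rceil\bigr)+1$ reflects that the relevant tail sum $\sum_{k\ge m}(1-\alpha)\alpha_k$ starts at $m$, with $\sigma^*=\sigma_1^+(\lceil\,\cdot\,/(1-\alpha)\rceil)$ its monotone rate of divergence.

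The third step combines the two: one ``round'' consists of taking a window on which $(x_n)$ oscillates by at most $\rho$, choosing the anchor $f(x_m)$ with $m$ its left endpoint, invoking $\tilde\Phi(\varepsilon_0,\cdot,\cdot)$ to make the $w_\cdot(f(x_m))$-iterates oscillate by at most $\varepsilon_0$ on a subwindow, and combining with the previous step via the triangle inequality to produce a further subwindow on which $(x_n)$ oscillates by at most an $\alpha$-governed contraction of $\rho$ plus $O(\varepsilon_0)$. Starting from the trivial bound $2M$ and iterating $L=\lceil\log_\alpha(\tilde\varepsilon/(2M))\rceil$ times makes the $\alpha$-governed residual drop below $\tilde\varepsilon$, while the accumulated $\varepsilon_0$-errors form a geometric series of total size $O(\varepsilon_0/(1-\alpha))$; the choices $\tilde\varepsilon=\varepsilon(1-\alpha)/9$ and $\varepsilon_0=\tilde\varepsilon(1-\alpha)/2$ are tuned so that, after the final triangle-inequality $\varepsilon/3$-splits, everything stays below $\varepsilon$. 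The functionals $F^i_{\varepsilon,g}$ push the window function $g$ forward through the $L$ rounds — each round must leave room for the $\beta$- and $\tilde\Phi$-computations of all later rounds, whence $F^{i+1}_{\varepsilon,g}=\max\{F^0_{\varepsilon,g},\tilde\Phi(\varepsilon_0,F^i_{\varepsilon,g},\cdot)\}$ — and the numbers $\Psi^i_{\varepsilon,g,n}$ track the left endpoint surviving after $i$ rounds via $\Psi^{i+1}_{\varepsilon,g,n}=\tilde\Phi(\varepsilon_0,F^{L-i}_{\varepsilon,g},\Psi^i_{\varepsilon,g,n})$; a last application of $\beta(\varepsilon/3,\cdot)$ at $\Psi^{L+1}_{\varepsilon,g,n}$ converts the final ``$w$-oscillation $\le\varepsilon/3$ and $d(x_n,w_n)\le\varepsilon/3$'' into ``$d(x_i,x_j)\le\varepsilon$'', giving $\Psi(\varepsilon,g,n)=\beta(\varepsilon/3,\Psi^{L+1}_{\varepsilon,g,n})$. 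This shows $(x_n)$ is metastable in the sense of \eqref{def-metastability-KohPin22} with rate $\Psi(\,\cdot\,,\cdot\,,0)$, and passing to the $k$-indexed formulation via Fact \ref{relation-rates-meta-KohPin22} gives the rate $\Omega(k,g)=\Psi(\tfrac1{k+1},\tilde g,0)$ of the statement.

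The main difficulty is not analytic but combinatorial: one must check that the nested definitions of $F^i_{\varepsilon,g}$, $\Psi^i_{\varepsilon,g,n}$, $\tilde\Phi$ and $\beta$ interlock so that at every round the surviving window is long enough to carry out the next round, and that the per-round additive error $\varepsilon_0$, geometric in $\alpha$, together with the $\alpha^L$-term genuinely fit inside the $\varepsilon/3$-budget. Each individual estimate — the comparison recursion, the telescoping unfolding, the uniform Browder-type metastability of the $w$-iterates — is routine given the results already established; it is precisely this bookkeeping that the Kohlenbach–Pinto framework \cite{KohPin22} packages, so that the argument reduces to instantiating that framework with the uniform rate $\Phi^*_M$ of Corollary \ref{abstract-HPPA-bounded-C} and the comparison recursion above.
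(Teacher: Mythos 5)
Your proposal is correct and follows essentially the same route as the paper: establish the uniform ($x$-independent) rate $\Phi^*_M$ for the anchored iterates $(w_n(x))$ via Corollary \ref{abstract-HPPA-bounded-C}, pass to $\Phi^\dag$ and $\tilde{\Phi}$ through Lemma \ref{rate-meta-H-Tn}, instantiate the Kohlenbach--Pinto translation (their Theorem 3.10, with $\theta'_b=\tilde{\Phi}$, $A=\sigma_1^+$, $\varphi=f$, $\delta(\varepsilon)=1-\alpha$), and convert to the $k$-indexed rate via Remark \ref{relation-rates-meta-KohPin22}. The only difference is that you additionally sketch, heuristically, the internal comparison-recursion and bootstrapping argument that the cited theorem packages, whereas the paper invokes it as a black box; since you ultimately reduce to that instantiation as well, the proofs coincide.
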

\begin{proof}
As (H1$\alpha_n$) holds,  we have that $\sigma_1^+$ is a  rate of divergence of $\sum\limits_{n=0}^{\infty} \alpha_n$. 
Furthermore,   $\sigma_1^+$ is also monotone in the 
sense of \cite{KohPin22}, that is: for all $m,n\in \N$, $m\leq n $ implies $\sigma_1^+(m) \leq \sigma_1^+(n)$. 

By Corollary \ref{abstract-HPPA-bounded-C}, $\Phi^*_M$ is a rate of metastability of $(w_n(x))$ for all
$x\in C$. 
 Apply Lemma \ref{rate-meta-H-Tn}.\eqref{def-metastability-dag} to get that 
$\Phi^\dag$ is monotone in the third argument and that the following holds for all $x\in C$: 
\[
\forall k,n\in \N\, \forall g:\N \to \N\, \exists N\in [n,\Phi^\dag(k,g,n)]\,\forall i,j \in [N;N+g(N)] 
\left(d(w_i(x), w_j(x)) \leq \frac{1}{k + 1}\right).
\]
Then  $\tilde{\Phi}$ is a monotone (in the sense of \cite[Remark 3.4]{KohPin22}) function that 
satisfies  condition $H[T_n]$ from \cite[pag. 10]{KohPin22}. Thus, we can apply \cite[Theorem 3.10]{KohPin22} with 
\begin{center}
$b=M$, $S_n=T_n$, $A=\sigma_1^+$, $\theta'_b=\tilde{\Phi}$, $\sigma_1=\beta$,  $\varphi=f$, and $\delta(\varepsilon)=1-\alpha$ 
for all $\varepsilon>0$
\end{center}
to get that $\Psi$ as defined in the hypothesis satisfies the following: 
for all $\varepsilon>0 $, $n\in \N$, $g:\N\to \N$ there exist $N\in [n;\Psi(\varepsilon,g,n)]$, $P\in [n;N]$ and $y\in C$ such that 
\begin{equation}\label{meta-GENVAM-bounded-useful}
d(y,w_P(f(y))) \le \varepsilon \, \,  \text{and} \, \, \forall i \in [N;g(N)] \left(d(x_i,y) \le 
\frac{\varepsilon}2\right).
\end{equation}
It follows that the mapping $\Gamma:\N\to \N, \,\, \Gamma(\varepsilon,g)=\Psi(\varepsilon,g,0)$ is a rate of metastability of 
 $(x_n)$ in the sense of \cite{KohPin22}. Apply Remark \ref{relation-rates-meta-KohPin22} to get that 
$\Omega$ is a rate of metastability of $(x_n)$. 
\end{proof}

\begin{fact}
As in \cite[Remark 3.12]{KohPin22}, Theorem \ref{meta-GENVAM-bounded} can be adapted to the case when 
$M\in \N^*$ is an upper bound on the set $\{d(x_0, w_n(f(x_0))) \mid n\in \N\}$, without assuming that $C$ is bounded.
\end{fact}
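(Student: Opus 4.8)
The plan is to re-examine the proof of Theorem \ref{meta-GENVAM-bounded} and isolate the two places where the boundedness of $C$ (via the bound $M$ on its diameter) is actually used, and then replace each by the weaker hypothesis that $M\geq d(x_0,w_n(f(x_0)))$ for all $n\in\N$. The first place is the appeal to Corollary \ref{abstract-HPPA-bounded-C}, which supplies a rate of metastability $\Phi^*_M$ of the Halpern-type sequence $(w_n(x))$ for every $x\in C$; the second is the choice $b=M$ fed into \cite[Theorem 3.10]{KohPin22}.

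For the first place, I would return to Proposition \ref{Browder-abstractHPPA-CAT(0)}, whose final sentence already records the unbounded alternative $M=3K^*_z$, with $K^*_z$ bounding $\max\{d(x^*_0,z),d(u,z)\}$. Applied to $(w_n(x))$, where $x^*_0=x$ and $u=x$, this needs only a bound on $d(x,z)$ for a fixed $z\in F$, uniformly over the points $x=f(y)$ that occur along the run of the Kohlenbach--Pinto construction. Since $f$ is an $\alpha$-contraction, so $d(f(y),z)\leq\alpha d(y,z)+d(f(z),z)$, and since by \eqref{meta-GENVAM-bounded-useful} the points $y$ stay close to genVAM iterates $x_i$, I would show --- arguing exactly as in Lemma \ref{yn-bounds-hyp} and Lemma \ref{bound-dynsz-dynsu}, and using that the genVAM iterates and the $w_n(f(x_0))$ all lie within a computable radius of $x_0$ once $M$ bounds the orbit $\{d(x_0,w_n(f(x_0)))\}_n$ --- that all the distances $d(f(y),z)$, $d(w_n(f(y)),z)$, $d(x_n,z)$ are dominated by a computable expression in $M$ and $\alpha$; in particular $\Sigma^*$, $\Phi^*_M$ and $\beta$ keep their forms with $M$ reinterpreted. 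For the second place, \cite[Remark 3.12]{KohPin22} already observes that in \cite[Theorem 3.10]{KohPin22} the constant $b$ need not bound $\mathrm{diam}(C)$: it suffices that $b$ bound the finitely many iterates of the auxiliary Halpern sequences that are actually examined during the run, which is precisely what the orbit bound provides. Substituting $b=M$ with this meaning and invoking \cite[Remark 3.12]{KohPin22} in place of \cite[Theorem 3.10]{KohPin22}, the proof of Theorem \ref{meta-GENVAM-bounded} goes through unchanged, and the rate $\Omega$ of \eqref{def-Omega-meta-GENVAM-bounded} is literally the same, only its constant carrying the new interpretation.

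The main obstacle I expect is bookkeeping rather than any new idea: one must audit every intermediate bound in the chain leading to $\Omega$ --- those behind $\Sigma^*$, $\psi^*$, $\Phi^*$, the constants $L$ and $\varepsilon_0$, and the inductively defined $F^i_{\varepsilon,g}$ --- and check that $C$ enters each of them only through a quantity dominated by the orbit bound $M$, with no surviving appeal to $\mathrm{diam}(C)$ (for instance when estimating $d(f(y),f(y'))$ for two iterates $y,y'$, or in the auxiliary sequence $(y_n)$ of Subsection \ref{section-auxiliary-yn}, which now lives inside a ball of radius computable from $M$). Once this verification is complete, the adapted statement holds verbatim with $M$ reinterpreted as an upper bound on $\{d(x_0,w_n(f(x_0)))\mid n\in\N\}$.
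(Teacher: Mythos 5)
Your proposal is correct and follows essentially the same route as the paper, which simply defers to \cite[Remark 3.12]{KohPin22}: the diameter of $C$ enters only through the uniform metastability rate $\Phi^*_M$ for the Halpern sequences $(w_n(x))$ (for which Proposition \ref{Browder-abstractHPPA-CAT(0)} already records the unbounded alternative) and through the constant $b$ fed into \cite[Theorem 3.10]{KohPin22}, and both are majorized computably from the orbit bound $M$ together with $\alpha$. One caution for the bookkeeping: derive the boundedness of the relevant points directly from the orbit bound (via the nonexpansivity of $x\mapsto w_n(x)$ and the $\alpha$-contractivity of $f$, by induction), rather than from \eqref{meta-GENVAM-bounded-useful}, since that inequality is the output of the very machinery being adapted.
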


As a consequence of (the proof of) our quantitative Theorem \ref{meta-GENVAM-bounded} and reasoning as in \cite[Remark 3.6, Remark 3.11]{KohPin22}, 
we get a strong convergence result for the genVAM iteration $(x_n)$.

\begin{theorem}\label{abstract-genVAM-main-thm-qual}
Assume that the following hold:
\begin{center}
$\sum\limits_{n=0}^{\infty} \alpha_n =\infty$, \,\, 
$\lim\limits_{n\to\infty}\alpha_n=0$, \,\, 
$\lim\limits_{n\to\infty} \frac{|\alpha_{n+1}-\alpha_n|}{\alpha_n^2}=0$, and 
$\lim\limits_{n\to\infty}\lambda_n = \lambda >0$.
\end{center}
Then $(x_n)$ converges strongly to the unique fixed point of $Q \circ f$, where 
\[Q:C\to C, \quad Q(x) = \lim\limits_{n\to \infty} w_n(x).\]
\end{theorem}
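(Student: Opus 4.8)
The plan is to combine three ingredients: first, that the operator $Q$ is a well-defined nonexpansive self-map of $C$, so that $Q\circ f$ is an $\alpha$-contraction with a unique fixed point $p$; second, that $(x_n)$ converges by the metastability already established; and third, that the limit of $(x_n)$ is a fixed point of $Q\circ f$, hence equals $p$. I would begin by noting that the four qualitative hypotheses are precisely (H1$\alpha_n$), (H3$\alpha_n$), (H4$\alpha_n$), (H4$\lambda_n$) with (possibly non-effective) witnesses $\sigma_1,\sigma_3,\sigma_4,\theta_4$, so Theorems~\ref{abstract-HPPP-main-thm-quant} and \ref{meta-GENVAM-bounded} (the latter via the Fact following it) are applicable. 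An easy induction on $n$, using (W4) and the nonexpansivity of $T_n$, shows that each $w_n:C\to C$ from \eqref{def-wnx} is nonexpansive. For fixed $x\in C$, the sequence $(w_n(x))$ is the abstract HPPA \eqref{def-xnstar} with $x^*_0=u=x$, so Theorem~\ref{abstract-HPPP-main-thm-quant} gives that $(w_n(x))$ is Cauchy; as $C$ is closed in the complete space $X$, it converges, so $Q(x)=\lim_n w_n(x)$ is well defined. Being a pointwise limit of nonexpansive maps, $Q$ is nonexpansive, whence $Q\circ f$ is an $\alpha$-contraction on the complete metric space $C$, and by the Banach contraction principle it has a unique fixed point $p\in C$.

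Next I would prove that $(x_n)$ converges. The set $\{d(x_0,w_n(f(x_0)))\mid n\in\N\}$ is bounded: indeed $(w_n(f(x_0)))$ is the abstract HPPA started at and anchored at $f(x_0)$, so Lemma~\ref{xn-bound-as-reg-hyp}.\eqref{xn-bound-hyp} (applied with the constant contraction $f\equiv f(x_0)$ and $\alpha=0$) yields $d(w_n(f(x_0)),z)\le d(f(x_0),z)$, hence $d(x_0,w_n(f(x_0)))\le d(x_0,z)+d(f(x_0),z)$. Therefore, by (the proof of) Theorem~\ref{meta-GENVAM-bounded} together with the Fact following it, $(x_n)$ admits a rate of metastability; being metastable, $(x_n)$ is Cauchy, and by completeness of $X$ and closedness of $C$ it converges to some $x^*\in C$. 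It remains to identify $x^*$ with $p$.

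For this I would use the inequality \eqref{meta-GENVAM-bounded-useful} extracted in the proof of Theorem~\ref{meta-GENVAM-bounded} (valid in the adapted setting of the Fact). Fix $\varepsilon>0$ and choose $n_0\in\N$ with $d(x_i,x^*)\le\varepsilon$ for all $i\ge n_0$ and $d(w_m(f(x^*)),Q(f(x^*)))\le\varepsilon$ for all $m\ge n_0$; this is possible since $x_i\to x^*$ and $w_m(f(x^*))\to Q(f(x^*))$. Applying \eqref{meta-GENVAM-bounded-useful} with this $\varepsilon$, with $n:=n_0$ and $g:=\mathrm{id}$, produces $N\in[n_0;\Psi(\varepsilon,g,n_0)]$, $P\in[n_0;N]$ and $y\in C$ with $d(y,w_P(f(y)))\le\varepsilon$ and $d(x_N,y)\le\varepsilon/2$. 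Then $d(y,x^*)\le d(y,x_N)+d(x_N,x^*)\le\tfrac32\varepsilon$ (using $N\ge n_0$), so by nonexpansivity of $w_P$ and $f$ being an $\alpha$-contraction, $d(w_P(f(y)),w_P(f(x^*)))\le d(f(y),f(x^*))\le\alpha\,d(y,x^*)\le\tfrac32\alpha\varepsilon$. Using $P\ge n_0$ and the triangle inequality through $x_N$, $y$, $w_P(f(y))$, $w_P(f(x^*))$, we obtain $d(x^*,Q(f(x^*)))\le d(x^*,x_N)+d(x_N,y)+d(y,w_P(f(y)))+d(w_P(f(y)),w_P(f(x^*)))+d(w_P(f(x^*)),Q(f(x^*)))\le(\tfrac72+\tfrac32\alpha)\varepsilon\le 5\varepsilon$. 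As $\varepsilon>0$ was arbitrary, $x^*=Q(f(x^*))$, so $x^*$ is a fixed point of $Q\circ f$, hence $x^*=p$ by uniqueness; therefore $(x_n)\to p$.

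The main obstacle I anticipate is the passage to the possibly unbounded $C$: one must carry out the adaptation of Theorem~\ref{meta-GENVAM-bounded} indicated in the Fact (replacing the diameter bound by a bound on $\{d(x_0,w_n(f(x_0)))\mid n\in\N\}$) and check that the key inequality \eqref{meta-GENVAM-bounded-useful} survives this adaptation; everything else is routine, given that all auxiliary sequences $(w_n(x))$, $x\in C$, converge by Theorem~\ref{abstract-HPPP-main-thm-quant}.
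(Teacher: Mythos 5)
Your proposal is correct and follows essentially the same route as the paper: metastability (Theorem \ref{meta-GENVAM-bounded}, via the inequality \eqref{meta-GENVAM-bounded-useful} extracted in its proof) gives that $(x_n)$ is Cauchy and hence convergent, the inductive nonexpansivity of each $w_n$ (the paper's Claim) makes $Q\circ f$ an $\alpha$-contraction, and the same triangle-inequality argument through the auxiliary point $y$ and $w_P(f(y))$ identifies the limit with the unique fixed point of $Q\circ f$. Your explicit justification of the unbounded-$C$ adaptation, via the bound on $\{d(x_0,w_n(f(x_0)))\mid n\in\N\}$, is a detail the paper leaves implicit but does not change the approach.
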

\begin{proof}
By Theorem \ref{meta-GENVAM-bounded}, $(x_n)$ is Cauchy.  Thus, as $X$ is complete and $C$ 
is closed, $(x_n)$ converges strongly to some $\overline{x}\in C$. Furthermore, for all $x\in C$, 
$(w_n(x))$ converges strongly, hence $Q$ is well-defined.\\[2mm]
\textbf{Claim: } For all $x,y\in C$ and $n\in\N$, $d(w_n(x),w_n(y)) \le d(x,y)$. \\[1mm]
\textbf{Proof of Claim:} The proof is by induction on $n$. The case $n=0$ is obvious, as 
$w_0(x)=x$ and $w_0(y)=y$. For the inductive step, we get that for all $n\in\N$, 
\begin{align*}
d(w_{n+1}(x),w_{n+1}(y)) &=d(\alpha_nx+(1-\alpha_n)T_n w_n(x),\alpha_n y+(1-\alpha_n)T_n w_n(y))\\
&\le  \alpha_nd(x,y) + (1-\alpha_n)d(w_n(x),w_n(y)) \\
& \quad \text{by (W4) and the  nonexpansivity of }T_n \\
&\le  d(x,y)\quad \text{ by the induction hypothesis}. 
\end{align*}
\,\,  \hfill $\blacksquare$

As an immediate consequence of the claim, we get that $Q$ is nonexpansive, hence $Q \circ f$ is 
a contraction. 
We prove in the sequel that $\overline{x}$ is the unique fixed point of $Q \circ f$.

Let $k\in \N$ be arbitrary. As $\lim\limits_{n\to \infty}x_n=\overline{x}$ and 
$Q(f(\overline{x}))=\lim\limits_{n\to \infty}w_n(f(\overline{x}))$, there exists $N_0\in \N$ such that for all 
$ n\ge N_0$, 

\begin{equation}\label{ineq-sc-1}
d(x_n,\overline{x}), d(w_n(f(\overline{x})),Q(f(\overline{x}))) \le \frac{1}{6(k+1)}.
\end{equation}

Apply now \eqref{meta-GENVAM-bounded-useful} with $\varepsilon=\frac{1}{6(k+1)}$ and $n=N_0$ to get that there exist
$N_1,P \ge N_0$ and $y\in C$  such that 
\begin{equation}\label{ineq-sc-2}
d(y,w_P(f(y))), d(x_{N_1},y) \le \frac1{6(k+1)}. 
\end{equation}
Furthermore, as $d(\overline{x},y) \le d(x_{N_1},\overline{x})+d(x_{N_1},y)$, we obtain, using 
\eqref{ineq-sc-1} and \eqref{ineq-sc-2},  that
\begin{equation}\label{ineq-sc-3}
d(\overline{x},y) \le \frac{1}{3(k+1)}.
\end{equation}
We have that 
\begin{align*}
d(\overline{x},Q(f(\overline{x}))) &\le d(\overline{x},w_P(f(\overline{x})))+d(w_P(f(\overline{x})),Q(f(\overline{x}))) 
 \stackrel{\eqref{ineq-sc-1}}{\le} d(\overline{x},w_P(f(\overline{x}))) + \frac{1}{6(k+1)} \\
&\le d(\overline{x},y)+d(y,w_P(f(y)))+ d(w_P(f(y)),w_P(f(\overline{x}))) + \frac{1}{6(k+1)} \\
& \le \frac{4}{6(k+1)} + d(w_P(f(y)),w_P(f(\overline{x})))\quad \text{by \eqref{ineq-sc-2} and 
  \eqref{ineq-sc-3}} \\
& \leq  \frac{4}{6(k+1)} +  d(y, \overline{x}) \quad \text{by the claim and the fact that }f \text{ is a contraction}\\
& \stackrel{\eqref{ineq-sc-3}}{\le} \frac{1}{k+1}.
\end{align*}
As $d(\overline{x},Q(f(\overline{x}))) \leq \frac{1}{k+1}$ for arbitrary $k\in \N$, it follows  that $Q(f(\overline{x}))=\overline{x}$. 
\end{proof}

\subsection{Examples}

We consider first the example given in \cite[Section 5]{LeuPin21}. Let for all $n\in \N$,
\[
\alpha_n=(n+2)^{-\frac{3}{4}}, \quad \lambda_n=1+\frac{(-1)^n}{n+1}.
\]
As pointed out in \cite[Section 5]{LeuPin21}, $(\alpha_n)$ is nonincreasing and (H1$\alpha_n$), 
(H4$\alpha_n$) and (H4$\lambda_n$) hold with 
\[
\sigma_1(k)=(k+1)^4, \quad \sigma_4(k)=(k+1)^4+1, \quad  \lambda=1, \quad  \theta_4(k)=k.
\]

As an application of Theorem \ref{abstract-HPPP-main-thm-quant} we get the following.

\begin{proposition}
$(x^*_n)$ is Cauchy  with rate of metastability $\Phi^*$ defined by
\[
\Phi^*(k,g)=\max\{\Sigma^*(3k+2),\Omega^*(3k+2,h_{k,g})\},
\]
where $\Omega^*$ is defined by \eqref{def-Omega-meta-yns-1} and $h_{k,g}$  by \eqref{def-hkg}, with 
\[
\Sigma^*(k) = \left(2^{12}{K^*_z}^4(k+1)^4 + 2+\lceil \ln(4K^*_z(k+1))\rceil\right)^4 +1.
\]
\end{proposition}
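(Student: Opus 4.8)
The plan is to apply Theorem~\ref{abstract-HPPP-main-thm-quant} directly, since this example satisfies all of its hypotheses. First I would note that $(\alpha_n)$ is nonincreasing (the map $n\mapsto (n+2)^{-3/4}$ is decreasing) and, as already recorded before the proposition, that (H1$\alpha_n$) and (H4$\alpha_n$) hold with $\sigma_1(k)=(k+1)^4$ and $\sigma_4(k)=(k+1)^4+1$. Hence Theorem~\ref{abstract-HPPP-main-thm-quant} applies and gives that $(x^*_n)$ is Cauchy with rate of metastability $\Phi^*(k,g)=\max\{\Sigma^*(3k+2),\Omega^*(3k+2,h_{k,g})\}$; moreover, since $(\alpha_n)$ is nonincreasing, the function $\Omega^*$ supplied by Proposition~\ref{Browder-abstractHPPA-CAT(0)} is the one defined by \eqref{def-Omega-meta-yns-1}, and $h_{k,g}$ is as in \eqref{def-hkg}, exactly as in the statement.

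It then remains only to simplify $\Sigma^*$ for this choice of parameters. Substituting $\lambda=1$, $\theta_4(k)=k$ and $\sigma_4(k)=(k+1)^4+1$ into \eqref{def-psistar}, and using that $4K^*_z(k+1)$ is already a natural number, the first entry of the maximum is $4K^*_z(k+1)-1$ while the second is $\sigma_4(4K^*_z(k+1)-1)=(4K^*_z(k+1))^4+1=256{K^*_z}^4(k+1)^4+1$; as $K^*_z\geq 1$ the second dominates, so $\psi^*(k)=256{K^*_z}^4(k+1)^4+1$. In particular, using $256\cdot 2^4=2^{12}$,
\[
\psi^*(2k+1)=256{K^*_z}^4(2k+2)^4+1=2^{12}{K^*_z}^4(k+1)^4+1.
\]

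Finally, plugging this into \eqref{def-Sigmastar} with $\sigma_1(k)=(k+1)^4$ yields
\[
\Sigma^*(k)=\big(\psi^*(2k+1)+\lceil\ln(4K^*_z(k+1))\rceil+1\big)^4+1
=\big(2^{12}{K^*_z}^4(k+1)^4+2+\lceil\ln(4K^*_z(k+1))\rceil\big)^4+1,
\]
which is the asserted formula. There is no genuine obstacle here: the argument is a routine specialization of Theorem~\ref{abstract-HPPP-main-thm-quant}, the only points requiring a little care being the ceiling bookkeeping and the constant $2^{12}$.
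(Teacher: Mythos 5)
Your proposal is correct and follows exactly the paper's intended route: the proposition is stated there as a direct application of Theorem~\ref{abstract-HPPP-main-thm-quant} with the moduli $\sigma_1(k)=(k+1)^4$, $\sigma_4(k)=(k+1)^4+1$, $\lambda=1$, $\theta_4(k)=k$ and the nonincreasingness of $(\alpha_n)$. Your simplification of $\psi^*$ and $\Sigma^*$ (including the dominance of the $\sigma_4$-term and the constant $256\cdot 16=2^{12}$) is accurate and reproduces the stated formula.
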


Furthermore, rates of metastability of $(x_n)$ can be computed as an application of Theorem \ref{meta-GENVAM-bounded}.

\mbox{}

In the following, we give another example of parameter sequences $(\alpha_n)$, $(\lambda_n)$  satisfying the hypotheses of 
Theorem \ref{meta-GENVAM-bounded}. Take for all $n\in \N$,
\[
\alpha_n=\tilde{\alpha}+\frac{2}{(1-\alpha)(n+J)}, \qquad \lambda_n=\frac{n+J}{n+J-1}, 
\]
where  $J=2\left\lceil\frac{1}{1-\alpha}\right\rceil+1$ and $\tilde{\alpha}\in\left(0,\frac{1-\alpha}{3-\alpha}\right]$.

As $(\alpha_n)$ is nonincreasing, we get that for all $n\in \N$, $\alpha_n \le \alpha_0 = \tilde{\alpha}+\frac{2}{(1-\alpha)J} \leq  \frac{1-\alpha}{3-\alpha} +
\frac{2}{(1-\alpha)J} \leq 1$. Thus, $(\alpha_n) \subseteq (0,1]$.

\begin{proposition}\label{example2-rate-as-reg}
$(x_n)$ is asymptotically regular with rate $\Phi_0(k)=\left\lceil\frac{3J^2K_z}{2}\right\rceil(k+1)-J$.
\end{proposition}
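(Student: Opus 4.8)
The plan is to follow exactly the route used for the earlier example in Lemma~\ref{lem-as-reg-ex1} and Proposition~\ref{linear-rates-as-reg-all-ex1}: first establish the decay $d(x_{n+1},x_n)\le\frac{3JK_z}{(1-\alpha)(n+J)}$ by means of the Sabach--Shtern lemma (Lemma~\ref{lem:sabach-shtern-v2}), and then read off the claimed linear rate. Write $a_n=\frac{2}{(1-\alpha)(n+J)}$, so that $\alpha_n=\tilde{\alpha}+a_n$. The crucial observation is that the shift $\tilde{\alpha}$ cancels in first differences, $\alpha_n-\alpha_{n+1}=a_n-a_{n+1}$, so the ``error'' part of the recursion is untouched by the shift, while the constant $\tilde{\alpha}$ only improves the contraction coefficient, since $\beta_n=1-(1-\alpha)\alpha_{n+1}\le 1-(1-\alpha)a_{n+1}$.

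First I would record, from $\lambda_n=\frac{n+J}{n+J-1}$, the identity $\left|1-\frac{\lambda_{n+1}}{\lambda_n}\right|=\frac{1}{(n+J)^2}$, and combine it with \eqref{main-ineq-apply-Xu-hyp} (available since $(T_n)$ satisfies $Res((\lambda_n),n,n+1)$) and with $a_n-a_{n+1}=\frac{2}{(1-\alpha)(n+J)(n+J+1)}$ to get
\[
d(x_{n+2},x_{n+1})\le\bigl(1-(1-\alpha)a_{n+1}\bigr)d(x_{n+1},x_n)+2K_z(a_n-a_{n+1})+\frac{2K_z(1-\alpha_{n+1})}{(n+J)^2}.
\]
The technical heart is the elementary inequality $\frac{2K_z(1-\alpha_{n+1})}{(n+J)^2}\le K_z(a_n-a_{n+1})$, i.e. $(1-\alpha_{n+1})(1-\alpha)\frac{n+J+1}{n+J}\le 1$; substituting $\alpha_{n+1}=\tilde{\alpha}+\frac{2}{(1-\alpha)(n+1+J)}$ reduces this to $(1-\alpha)(1-\tilde{\alpha})(n+1+J)\le n+2+J$, which holds because $(1-\alpha)(1-\tilde{\alpha})\le 1$ already gives $(1-\alpha)(1-\tilde{\alpha})(n+1+J)\le n+1+J\le n+2+J$. (Only $(1-\alpha)(1-\tilde{\alpha})\le 1$ is needed here; the precise values $J=2\lceil\frac{1}{1-\alpha}\rceil+1$ and the constraint $\tilde{\alpha}\le\frac{1-\alpha}{3-\alpha}$ fixed before the statement serve only to keep $(\alpha_n)\subseteq(0,1]$.) This yields the clean recursion $d(x_{n+2},x_{n+1})\le\bigl(1-(1-\alpha)a_{n+1}\bigr)d(x_{n+1},x_n)+3K_z(a_n-a_{n+1})$.

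Finally, I would apply Lemma~\ref{lem:sabach-shtern-v2} with $s_n=d(x_{n+1},x_n)$, $a_n$ as above (so $N=2$, $\gamma=1-\alpha$, and $J\ge 3\ge 2$), and $c_n=L=3K_z$, using $s_0=d(x_1,x_0)\le 2K_z\le 3K_z$ by Lemma~\ref{xn-bound-as-reg-hyp}.\eqref{xn-Tnm-as-reg-hyp}; this gives $d(x_{n+1},x_n)\le\frac{3JK_z}{(1-\alpha)(n+J)}$ for all $n\in\N$. Then for $n\ge\Phi_0(k)$ one has $n+J\ge\lceil\frac{3J^2K_z}{2}\rceil(k+1)$, and since $\frac{1}{1-\alpha}\le\lceil\frac{1}{1-\alpha}\rceil=\frac{J-1}{2}\le\frac{J}{2}$, hence $\frac{3JK_z}{1-\alpha}\le\frac{3J^2K_z}{2}\le\lceil\frac{3J^2K_z}{2}\rceil$, one concludes $d(x_{n+1},x_n)\le\frac{1}{k+1}$, as required. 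The only point needing care is the elementary inequality of the second paragraph, where one must check that the error constant comes out to exactly $3K_z$ so as to reproduce the rate of the unshifted example; this works cleanly precisely because $\tilde{\alpha}$ cancels in $a_n-a_{n+1}$. Everything else is a direct transcription of the computations behind Lemma~\ref{lem-as-reg-ex1}, and one could equally well argue via Remark~\ref{main-theorem-remark-1} and \eqref{main-ineq-apply-Xu-hyp-star}.
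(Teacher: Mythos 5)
Your proposal is correct and follows essentially the same route as the paper: derive the recursion $d(x_{n+2},x_{n+1})\le(1-(1-\alpha)(\alpha_{n+1}-\tilde{\alpha}))d(x_{n+1},x_n)+3K_z(\alpha_n-\alpha_{n+1})$ and apply Lemma~\ref{lem:sabach-shtern-v2} with $a_n=\alpha_n-\tilde{\alpha}$, $c_n=L=3K_z$, $N=2$, $\gamma=1-\alpha$. You additionally spell out the verification of the $3K_z$ error constant (which the paper delegates to a "slight change" of Lemma~\ref{lem-as-reg-ex1}), and that verification is sound.
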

\begin{proof}
Slightly change the proof of Lemma \ref{lem-as-reg-ex1} to get that for all $n\in \N$, 
\begin{align*}
d(x_{n+2},x_{n+1}) & \leq (1-(1-\alpha)\alpha_{n+1})d(x_{n+1},x_n)+3K_z(\alpha_n-\alpha_{n+1}).
\end{align*}
As $\alpha_{n+1}-\tilde{\alpha} < \alpha_{n+1}$, we get that 
\begin{align*}
d(x_{n+2},x_{n+1}) & \leq (1-(1-\alpha)(\alpha_{n+1}-\tilde{\alpha}))d(x_{n+1},x_n)+3K_z(\alpha_n-\alpha_{n+1})
\end{align*}
for all $n\in \N$. Apply now Lemma \ref{lem:sabach-shtern-v2} with
$s_n= d(x_{n+1},x_n)$, $a_n=\alpha_n-\tilde{\alpha}$, $c_n=L=3K_z$,  
$N=2$, $\gamma=1-\alpha$ and $J$ as above to obtain that 
$d(x_{n+1},x_n) \le \frac{3JK_z}{(1-\alpha)(n+J)}$ for all $n\in \N$.  
The fact that $\Phi_0$ is a rate of asymptotic regularity of $(x_n)$ follows as in 
the proof of Proposition \ref{linear-rates-as-reg-all-ex1}.
\end{proof}

\begin{proposition}
For any $L\in \N^*$, 
\[
\varphi_L: \N \to \N, \qquad \varphi_L(k) = L\left\lceil\frac{3J^2K_z}{2}\right\rceil (k+1)-J
\]
is a rate of $L$-metastability of $(x_n)$.
\end{proposition}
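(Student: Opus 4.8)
The plan is to invoke directly the two results already established for this example. By Proposition~\ref{example2-rate-as-reg}, the sequence $(x_n)$ is asymptotically regular with rate $\Phi_0(k)=\left\lceil\frac{3J^2K_z}{2}\right\rceil(k+1)-J$. Since an asymptotically regular sequence with a given rate automatically admits rates of $L$-metastability via Lemma~\ref{as-reg-L-meta}, the proof reduces to a one-line computation.

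First I would apply Lemma~\ref{as-reg-L-meta} with $\varphi:=\Phi_0$. This yields that, for every $L\in\N^*$, the mapping
\[
k\mapsto \Phi_0\big(L(k+1)-1\big)
\]
is a rate of $L$-metastability of $(x_n)$. Then I would simply evaluate this expression: since $\Phi_0(m)=\left\lceil\frac{3J^2K_z}{2}\right\rceil(m+1)-J$, substituting $m=L(k+1)-1$ gives $\Phi_0(L(k+1)-1)=\left\lceil\frac{3J^2K_z}{2}\right\rceil\big((L(k+1)-1)+1\big)-J=L\left\lceil\frac{3J^2K_z}{2}\right\rceil(k+1)-J=\varphi_L(k)$, as claimed.

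There is no real obstacle here; the only thing to double-check is that the indices line up correctly — in particular that $L(k+1)-1\geq 0$ for $k\in\N$ and $L\in\N^*$ (which holds), so that $\Phi_0$ is applied at a legitimate argument, and that the arithmetic $(L(k+1)-1)+1=L(k+1)$ is performed before the ceiling is multiplied through, matching exactly the closed form stated in the proposition.
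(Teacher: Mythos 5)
Your argument is exactly the paper's: it invokes Proposition~\ref{example2-rate-as-reg} for the rate $\Phi_0$ and Lemma~\ref{as-reg-L-meta} to convert it into a rate of $L$-metastability, and the substitution $\Phi_0(L(k+1)-1)=L\left\lceil\frac{3J^2K_z}{2}\right\rceil(k+1)-J$ is carried out correctly. Nothing is missing.
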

\begin{proof}
Apply Propositions \ref{as-reg-L-meta} and \ref{example2-rate-as-reg}.
\end{proof}

\begin{lemma}\label{example2-quant-hypotheses}
\begin{enumerate}
\item\label{example2-quant-hypotheses-H1} (H1$\alpha_n$) holds with $\sigma_1(n)=\min\left\{4^{\left\lceil \frac{(1-\alpha)n}{2}\right\rceil+J-1}-J,\left\lceil\frac{n}{\tilde{\alpha}}\right\rceil \remin 1\right\}$.
\item\label{example2-quant-hypotheses-H4} (H4$\alpha_n$) holds with
$\sigma_4(k)=\left\lceil\sqrt{\left\lceil\frac{2}{\tilde{\alpha}^2(1-\alpha)}\right\rceil(k+1)}\, \right\rceil \remin J$. 
\item\label{example2-quant-hypotheses-H4l} (H4$\lambda_n$) holds with $\lambda=1$ and 
$\theta_4(k)=(k+2)\remin J$.
\end{enumerate}
\end{lemma}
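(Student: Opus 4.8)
The plan is to verify the three hypotheses by direct computation from the explicit forms $\alpha_n=\tilde\alpha+\frac{2}{(1-\alpha)(n+J)}$ and $\lambda_n=1+\frac{1}{n+J-1}$, using that $(\alpha_n)$ is strictly decreasing and $\tilde\alpha>0$. I would begin with \eqref{example2-quant-hypotheses-H4l}, which is immediate: $\lambda=1$ and $|\lambda_n-1|=\frac{1}{n+J-1}$, so $|\lambda_n-1|\le\frac1{k+1}$ holds exactly when $n+J-1\ge k+1$, i.e. when $n\ge(k+2)\remin J$; hence $\theta_4(k)=(k+2)\remin J$ is a rate of convergence of $(\lambda_n)$ to $1$.

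For \eqref{example2-quant-hypotheses-H1} I would exhibit two rates of divergence of $\sum_n\alpha_n$ and take their minimum, which is again a rate of divergence (the defining inequality only has to hold at the single argument $\sigma_1(n)$; alternatively this is Lemma~\ref{rate-divergence-an-prop}.\eqref{rate-divergence-sum-bn} applied with $a_n=\tilde\alpha$ and $b_n=\frac{2}{(1-\alpha)(n+J)}$). The first bound uses the constant summand: $\alpha_i\ge\tilde\alpha$ gives $\sum_{i=0}^m\alpha_i\ge(m+1)\tilde\alpha\ge n$ as soon as $m\ge\lceil n/\tilde\alpha\rceil\remin 1$. The second uses the harmonic summand: $\alpha_i\ge\frac{2}{(1-\alpha)(i+J)}$ gives $\sum_{i=0}^m\alpha_i\ge\frac{2}{1-\alpha}\sum_{j=J}^{m+J}\frac1j$, and I would bound the harmonic tail by the elementary dyadic estimate $\sum_{j=1}^{4^p}\frac1j\ge 1+p$ together with $\sum_{j=1}^{J-1}\frac1j\le J-1$, so that $\sum_{j=J}^{4^p}\frac1j\ge p-J+2$. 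Taking $p=\lceil\tfrac{(1-\alpha)n}{2}\rceil+J-1$ makes $p-J+2\ge\tfrac{(1-\alpha)n}{2}+1$, hence $\sum_{i=0}^m\alpha_i\ge n$ whenever $m\ge 4^p-J$ (and $4^p-J\ge 4^{J-1}-J\ge 0$, so the summation index is admissible). Thus $\sigma_1(n)$ may be taken as the minimum of $\lceil n/\tilde\alpha\rceil\remin 1$ and $4^{\lceil(1-\alpha)n/2\rceil+J-1}-J$, which is the stated value.

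For \eqref{example2-quant-hypotheses-H4}, monotonicity of $(\alpha_n)$ gives $|\alpha_{n+1}-\alpha_n|=\alpha_n-\alpha_{n+1}=\frac{2}{1-\alpha}\cdot\frac1{(n+J)(n+J+1)}\le\frac{2}{(1-\alpha)(n+J)^2}$, while $\alpha_n^2\ge\tilde\alpha^2$, so $\frac{|\alpha_{n+1}-\alpha_n|}{\alpha_n^2}\le\frac{2}{\tilde\alpha^2(1-\alpha)(n+J)^2}$. This is $\le\frac1{k+1}$ once $(n+J)^2\ge\lceil\frac{2}{\tilde\alpha^2(1-\alpha)}\rceil(k+1)$, i.e. as soon as $n\ge\lceil\sqrt{\lceil\frac{2}{\tilde\alpha^2(1-\alpha)}\rceil(k+1)}\,\rceil\remin J$, which is the claimed $\sigma_4$.

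The only step requiring a little care is the harmonic-tail estimate in \eqref{example2-quant-hypotheses-H1}: one must check that after inserting the ceiling the chosen exponent $p$ still satisfies $p-J+2\ge\frac{(1-\alpha)n}{2}$ and that $4^p\ge J$ so that $4^p-J$ is a legitimate summation index. All the remaining estimates are one-line bounds, and the boundary case $n=0$ is trivial since then the defining condition reads $\alpha_0\ge 0$.
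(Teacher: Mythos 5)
Your proposal is correct and follows essentially the same route as the paper: parts (ii) and (iii) use the identical bounds, and in part (i) you use the same decomposition $\alpha_n=\tilde\alpha+\frac{2}{(1-\alpha)(n+J)}$ together with the fact (Lemma \ref{rate-divergence-an-prop}.\eqref{rate-divergence-sum-bn}) that the minimum of the two rates is a rate of divergence of the sum. The only difference is that for the harmonic-type summand you verify the rate $4^{\lceil(1-\alpha)n/2\rceil+J-1}-J$ directly via the dyadic estimate $\sum_{j=1}^{4^p}\frac1j\ge 1+p$, whereas the paper obtains the same formula by composing the known rate $4^n-1$ for $\sum\frac{1}{n+1}$ with the shift and scaling items of Lemma \ref{rate-divergence-an-prop}; both computations are sound and yield the stated $\sigma_1$.
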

\begin{proof}
\begin{enumerate}
\item It is well-known that $\sum\limits_{n=0}^{\infty}\frac{1}{n+1}$ diverges with rate 
$\theta(n)=4^n-1$. Apply Lemma \ref{rate-divergence-an-prop}.\eqref{rate-divergence-an-N} 
with $a_n=\frac1{n+1}$ and  $N=J-1$ to obtain that $\sum\limits_{n=0}^{\infty}\frac{1}{n+J}$ 
diverges with rate  
$$\theta^*(n)=\theta(n+J-1)\remin (J-1)=(4^{n+J-1}-1)\remin (J-1)=4^{n+J-1}-J.$$  
Furthermore, by Lemma \ref{rate-divergence-an-prop}.\eqref{rate-divergence-cbn-np1-cbn} with 
$a_n=\frac1{n+J}$ and $c=\frac{2}{1-\alpha}$, we get that $\sum\limits_{n=0}^{\infty}\frac{2}{(1-\alpha)(n+J)}$ 
diverges with rate 
$$\chi(n)=\theta^*\left(\left\lceil \frac{(1-\alpha)n}{2}\right\rceil\right)
=4^{\left\lceil \frac{(1-\alpha)n}{2}\right\rceil+J-1}-J.$$

Obviously, $\sum\limits_{n=0}^{\infty} \tilde{\alpha}$ diverges with rate 
$\gamma(n)=\left\lceil\frac{n}{\tilde{\alpha}}\right\rceil \remin 1$.

Finally, an application of Lemma \ref{rate-divergence-an-prop}.\eqref{rate-divergence-sum-bn} 
gives us the result.

\item Remark first that for all $n\in\N$, 
$$\frac{|\alpha_{n+1}-\alpha_n|}{\alpha_n^2} \leq \frac{2}{\tilde{\alpha}^2(1-\alpha)}\cdot 
\frac{1}{(n+J)(n+1+J)} \leq \frac{2}{\tilde{\alpha}^2(1-\alpha)}\cdot \frac{1}{(n+J)^2}.$$ 

Then for all $n\geq \sigma_4(k)$, 
\begin{align*}
\frac{1}{(n+J)^2}\leq 
\frac{1}{\left\lceil\frac{2}{\tilde{\alpha}^2(1-\alpha)}\right\rceil(k+1)}, \text{ hence } 
\frac{|\alpha_{n+1}-\alpha_n|}{\alpha_n^2}  \leq \frac{1}{k+1}.
\end{align*}

\item For all $n\geq \theta_4(k)$,
\begin{align*}
\left|\frac{n+J}{n+J-1} -1\right| & = \frac{1}{n+J-1} \leq  \frac{1}{((k+2)\remin J)+J-1}
\leq \frac1{k+1}.
\end{align*}
\end{enumerate}
\end{proof}

As $(\alpha_n)$ is nonincreasing and  (H1$\alpha_n$), (H4$\alpha_n$), (H4$\lambda_n$) hold, we can apply 
Theorem \ref{meta-GENVAM-bounded} to compute rates of metastability of $(x_n)$.

\mbox{}

In the sequel, we consider the abstract HPPA $(x^*_n)$, hence $\alpha=0$, and take $\tilde{\alpha}=\frac13$. 
Then  
\[J=3,  \, \,  \,  \alpha_n=\frac13+\frac{2}{n+3}, \, \,  \, \lambda_n=\frac{n+3}{n+2}.\]

We get that for any $L\in \N^*$,  $(x^*_n)$ is $L$-metastable with rate 
\[\varphi^*_L(k) = L\left\lceil\frac{27K^*_z}{2}\right\rceil (k+1)-3. \]
Furthermore, (H1$\alpha_n$) holds with 
$\sigma_1(n)=\min\left\{4^{\left\lceil \frac{n}{2}\right\rceil+2}-3, 3n \remin 1\right\}$, 
(H4$\alpha_n$) holds with $\sigma_4(k)=\left\lceil\sqrt{18(k+1)}\right\rceil-3$, and 
(H4$\lambda_n$) holds with $\lambda=1$ and $\theta_4(k)=k\remin 1$.

\begin{proposition}\label{example2-rates-meta-abstractHPPA}
$(x^*_n)$ is Cauchy with rate of metastability 
\[
\Phi(k,g)=\max\{3\gamma^*(3k+2),\Omega^*(3k+2,h_{k,g})\},
\]
 where $\Omega^*$ is given by \eqref{def-Omega-meta-yns-1}, 
 $\gamma^*, \psi^*:\N\to \N$ are defined as follows:
\begin{align*}
\gamma^*(k) & =  \psi^*(2k+1)+\lceil\ln(4K^*_z(k+1))\rceil, \\
\psi^*(k) & = \max\left\{4K^*_z(k+1)-2, \, \left\lceil 6\sqrt{2K^*_z(k+1)} \,\right\rceil -3 \right\},
\end{align*}
and, for $k\in \N$ and $g:\N \to \N$, 
\[
h_{k,g}:\N\to \N, \quad h_{k,g}(m)= \max\{3\gamma^*(3k+2),m\}-m+g(\max\{3\gamma^*(3k+2),m\}).
\]
\end{proposition}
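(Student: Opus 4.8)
The plan is to obtain the statement by specialising Theorem~\ref{abstract-HPPP-main-thm-quant} to the data $\alpha=0$, $\tilde\alpha=\tfrac13$, $J=3$, and then replacing the rate $\Sigma^*$ appearing there by a cruder but more transparent upper bound. First I would check that the hypotheses of Theorem~\ref{abstract-HPPP-main-thm-quant} are met here: the standing assumptions of this section already supply that $X$ is a complete CAT(0) space, that $(T_n)$ satisfies $Res((\lambda_n),n,m)$ for all $m,n\in\N$, and that $f\equiv u$ is an $\alpha$-contraction with $\alpha=0$, so that $(x^*_n)$ is the abstract HPPA \eqref{def-xnstar} and $K^*_z$ satisfies \eqref{def-Kzs-main}. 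Moreover $(\alpha_n)=\big(\tfrac13+\tfrac2{n+3}\big)_n$ is nonincreasing, and (H1$\alpha_n$), (H4$\alpha_n$), (H4$\lambda_n$) hold with the explicit moduli $\sigma_1(n)=\min\{4^{\lceil n/2\rceil+2}-3,\,3n\remin 1\}$, $\sigma_4(k)=\lceil\sqrt{18(k+1)}\rceil-3$, $\lambda=1$ and $\theta_4(k)=k\remin 1$ --- these are exactly the values recorded just before the statement, i.e. the $\alpha=0$, $\tilde\alpha=\tfrac13$ instances of Lemma~\ref{example2-quant-hypotheses}. Thus Theorem~\ref{abstract-HPPP-main-thm-quant}, in its ``$(\alpha_n)$ nonincreasing'' case, applies, $\Omega^*$ is the rate from \eqref{def-Omega-meta-yns-1}, and $(x^*_n)$ is Cauchy with rate of metastability $\Phi^*$ as in \eqref{def-Phi*-abstract-HPPA}.

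Next I would evaluate the building blocks of $\Phi^*$ for these parameters. Since $\lambda=1$, the first entry of $\psi^*$ in \eqref{def-psistar} is $\theta_4\big(\lceil 4K^*_z(k+1)\rceil-1\big)=\big(4K^*_z(k+1)-1\big)\remin 1=4K^*_z(k+1)-2$ (using $K^*_z\ge1$), and its second entry is $\sigma_4\big(4K^*_z(k+1)-1\big)=\big\lceil\sqrt{18\cdot4K^*_z(k+1)}\,\big\rceil-3=\big\lceil6\sqrt{2K^*_z(k+1)}\,\big\rceil-3$; hence $\psi^*$ in \eqref{def-psistar} coincides with the $\psi^*$ of the statement. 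Writing $\gamma^*(k)=\psi^*(2k+1)+\lceil\ln(4K^*_z(k+1))\rceil$, the rate $\Sigma^*$ of \eqref{def-Sigmastar} becomes $\Sigma^*(k)=\sigma_1(\gamma^*(k))+1$.

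It remains to replace $\Sigma^*$ by $3\gamma^*$. From the definition of $\sigma_1$ one has $\sigma_1(n)\le 3n\remin 1$ for every $n$, so $\Sigma^*(k)=\sigma_1(\gamma^*(k))+1\le(3\gamma^*(k)\remin 1)+1=3\gamma^*(k)$, the last equality holding because $\gamma^*(k)\ge\psi^*(2k+1)\ge 6\ge 1$. Consequently $k\mapsto 3\gamma^*(k)$ is itself a rate of convergence to $0$ of $\big(d(x^*_n,y^*_n)\big)$, since any pointwise majorant of a rate of convergence is again one. Feeding this rate, together with the rate of metastability $\Omega^*$ of $(y^*_n)$ from Proposition~\ref{Browder-abstractHPPA-CAT(0)}, into Proposition~\ref{xnyn-lemma-meta} --- that is, re-running the proof of Theorem~\ref{abstract-HPPP-main-thm-quant} with $3\gamma^*$ in place of $\Sigma^*$ --- yields precisely the rate of metastability $\Phi(k,g)=\max\{3\gamma^*(3k+2),\Omega^*(3k+2,h_{k,g})\}$ with $h_{k,g}$ as in the statement, which is the claim.

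The only point requiring a little care is this last substitution: $\Sigma^*$ occurs in \eqref{def-Phi*-abstract-HPPA} not only in the outer ``$\max$'' but also inside $h_{k,g}$ (cf. \eqref{def-hkg}), so one must make sure the looser bound $3\gamma^*$ is admissible in both places. It is, because Proposition~\ref{xnyn-lemma-meta} accepts any rate of convergence of $\big(d(x^*_n,y^*_n)\big)$, and enlarging that rate only enlarges $h_{k,g}$ and hence $\Omega^*(3k+2,h_{k,g})$, leaving a valid rate of metastability. Everything else is the routine arithmetic of specialising $\sigma_1$, $\sigma_4$, $\theta_4$ and $\psi^*$, which I would not spell out in full.
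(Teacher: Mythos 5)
Your proposal is correct and takes essentially the same route as the paper: specialize Theorem~\ref{abstract-HPPP-main-thm-quant} to $\alpha=0$, $\tilde{\alpha}=\frac13$, $J=3$ with the moduli $\sigma_1,\sigma_4,\theta_4$ recorded before the statement, evaluate $\psi^*$, and reduce $\Sigma^*$ to $3\gamma^*$. The only (harmless) difference is that the paper notes the exact identity $\Sigma^*(k)=\sigma_1(\gamma^*(k))+1=3\gamma^*(k)$, since the minimum defining $\sigma_1(n)$ is always attained at $3n\remin 1$, so $\Phi$ is literally the $\Phi^*$ of \eqref{def-Phi*-abstract-HPPA} and no re-application of Proposition~\ref{xnyn-lemma-meta} with the majorized rate $3\gamma^*$ is needed --- though your majorization argument is also perfectly valid.
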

\begin{proof}
Apply Theorem \ref{abstract-HPPP-main-thm-quant} and remark that 
\begin{align*}
\Sigma^*(k) & = \sigma_1\left(\gamma^*(k)\right) + 1 
= \min\left\{16\cdot 4^{\left\lceil \frac{\gamma^*(k)}{2}\right\rceil}-3, 3\gamma^*(k) - 1\right\} + 1
 = 3\gamma^*(k),
\end{align*}
so $\Sigma^*(3k+2)=3\gamma^*(3k+2)$. 
\end{proof}

 \mbox{}

\noindent \textbf{Acknowledgement} 
We thank the referee for providing comments and suggestion sthat improved the final version of the paper. The first author acknowledges the support of FCT – Fundação
para a Ciência e Tecnologia through a doctoral scholarship with reference number
2022.12585.BD as well as the support of the research center CEMS.UL under the
FCT funding \href{https://doi.org/10.54499/UID/04561/2025}{UIDB/04561/2025}.

\end{document}